\renewcommand{\phi}{\varphi}
\renewcommand{\epsilon}{\varepsilon}
\renewcommand{\theta}{\vartheta}
\def\ZZ{{\mathbf Z}}
\def\CC{{\mathbf C}}
\def\AAA{{\mathbf A}}
\def\RR{{\mathbf R}}
\def\QQ{{\mathbf Q}}
\def\cH{\mathcal{H}}
\def\cI{\mathcal{I}}
\def\cJ{\mathcal{J}}
\def\cA{\mathcal{A}}
\def\cF{\mathcal{F}}
\def\cG{\mathcal{G}}
\def\cO{\mathcal{O}}
\def\cZ{\mathcal{Z}}
\def\cZ{\mathcal{Z}}
\def\fra{\mathfrak{a}}
\def\ofra{\overline{\fra}}
\def\ofrm{\overline{\mathfrak m}}
\def\ofrb{\overline{\frb}}
\def\frb{\mathfrak{b}}
\def\frm{\mathfrak{m}}
\def\frn{\mathfrak{n}}
\def\frp{\mathfrak{p}}
\def\frq{\mathfrak{q}}
\def\Z{{\mathbf Z}}
\def\C{{\mathbf C}}
\def\R{{\mathbf R}}
\def\Q{{\mathbf Q}}
\def\O{\mathcal{O}}
\def\frA{\mathfrak{A}}
\def\frB{\mathfrak{B}}
\DeclareMathOperator{\codim}{codim} 
\DeclareMathOperator{\Hom}{Hom}
 \DeclareMathOperator{\Spec}{Spec}
 \DeclareMathOperator{\lct}{lct}
 \DeclareMathOperator{\ord}{ord}
\DeclareMathOperator{\reg} {reg}
\def\.{\cdot}
\def\~{\widetilde}
\def\^{\widehat}
\def\o{\circ}
\def\ov{\overline}
\def\rat{\dashrightarrow}
\newcommand{\llbracket}{[\negthinspace[}
\newcommand{\rrbracket}{]\negthinspace]}
\newtheorem{lemma}{Lemma}[section]
\newtheorem{theorem}[lemma]{Theorem}
\newtheorem{corollary}[lemma]{Corollary}
\newtheorem{proposition}[lemma]{Proposition}
\theoremstyle{definition}
\newtheorem{remark}[lemma]{Remark}
\newtheorem{example}[lemma]{Example}
\theoremstyle{remark}
\newtheorem*{remark*}{Remark}
\newtheorem*{note*}{Note}
\title{Log canonical thresholds on varieties with bounded singularities}
\author[T.~de Fernex]{Tommaso de Fernex}
\address{Department of Mathematics, University of Utah, 155 South 1400 East,
Salt Lake City, UT 48112-0090, USA} \email{{\tt
defernex@math.utah.edu}}
\author[L. Ein]{Lawrence~Ein}
\address{Department of Mathematics, University of
Illinois at Chicago, 851 South Morgan Street (M/C 249),
Chicago, IL 60607-7045, USA}
\email{{\tt ein@math.uic.edu}}
\author[M. Musta\c{t}\u{a}]{Mircea~Musta\c{t}\u{a}}
\address{Department of Mathematics, University of Michigan,
Ann Arbor, MI 48109, USA}
\email{{\tt mmustata@umich.edu}}
\thanks{2010\,\emph{Mathematics Subject Classification}.
 Primary 14E15; Secondary 14B05, 14E30.
\newline
The first author was partially supported by NSF
CAREER grant DMS-0847059,
the second author
  was partially supported by NSF grant DMS-0700774,
  and the third author was partially supported by
  NSF grant DMS-0758454, and
  by a Packard Fellowship}
\keywords{Log canonical threshold, ascending chain condition}
\begin{document}

\begin{abstract}
We consider pairs $(X,\frA)$, where $X$ is a variety with klt singularities and $\frA$
is a formal product of ideals on $X$ with exponents in a fixed set that satisfies the Descending
Chain Condition. We also assume that $X$ has (formally)
bounded singularities, in the sense that it is,
formally locally, a subvariety in a fixed affine space defined by equations
of bounded degree. We prove in this context a conjecture of Shokurov, predicting that the set
of log canonical thresholds for such pairs satisfies the Ascending Chain Condition.
\end{abstract}

\maketitle

\markboth{T. DE FERNEX, L.~EIN AND M.~MUSTA\c{T}\u{A}}
{LOG CANONICAL THRESHOLDS ON VARIETIES WITH BOUNDED SINGULARITIES}

\section{Introduction}

The log canonical threshold is a fundamental invariant in birational geometry. 
It is attached to a divisor with real coefficients on a variety with mild singularities.
An outstanding conjecture due to Shokurov \cite{Sho} 
predicts that in any fixed dimension, if the coefficients of the divisors are taken 
in any given set of positive real numbers satisfying the
descending chain condition (DCC), then the set of all
possible log canonical thresholds satisfies the ascending chain condition 
(ACC).\footnote{A set of real numbers satisfies DCC (respectively, ACC) if
it does not contain any infinite sequence 
that is strictly decreasing (respectively, strictly increasing).
For short, such a set will be called a {\it DCC set} (respectively, {\it ACC set}).}

This conjecture has attracted considerable
attention due to its implications to the Termination of Flips Conjecture. 
More precisely, Birkar showed in \cite{Birkar} the following: if Shokurov's conjecture is known in dimension $n$,
and if the log Minimal Model Program is known in dimension $(n-1)$, then there are no infinite sequences of flips in dimension $n$ for pairs of non-negative log Kodaira dimension. We note that
due to the results in \cite{BCHM}, Termination of Flips is the remaining piece in order to establish
the log Minimal Model Program in arbitrary dimension. There is another outstanding open problem in the area, the Abundance Conjecture, but the circle of ideas we are discussing does not have anything to say in that direction.

Shokurov's conjecture was proved in the case of smooth (and, more generally, locally complete 
intersection)
ambient varieties in \cite{dFEM}, building on work from \cite{dFM} and \cite{Kol1}.
In this note we deal with the more general case of varieties that have bounded singularities,
in a sense to be explained below.

Let $k$ be an algebraically closed field of characteristic zero.
We assume that our ambient varieties are defined over $k$, and are normal and 
$\QQ$-Gorenstein. Let $X$ be any such variety. 
Instead of dealing with $\RR$-divisors, 
we work in the more general setting of \emph{$\RR$-ideals} on $X$: these are formal products 
$\frA=\fra_1^{q_1}\cdots\fra_r^{q_r}$, where the $\fra_i$ are nonzero ideal sheaves 
and the $q_i$
are positive real numbers. If the $q_i$ lie in a subset $\Gamma$ of $\RR_{>0}$, 
we say that $\frA$ is
a \emph{$\Gamma$-ideal}. 
Given two $\RR$-ideals $\frA$ and $\frB$ on $X$ and a point $x\in {\rm Supp}(\frA)$, 
if $(X,\frB)$ is log canonical, then one defines 
the {\it mixed log canonical
threshold} $\lct_{(X,\frB),x}(\frA)$ to be the largest $c$ such that the pair $(X,\frA^c\frB)$ is 
log canonical at $x$. One reduces to the more familiar setting of log canonical
thresholds when $\frB = \O_X$.

In order to study limits of log canonical thresholds, 
the basic ingredient in the methods used in \cite{dFM,Kol1,dFEM} 
is the construction of generic limit ideals. 
The main obstruction in proving Shokurov's Conjecture in its general form comes from the problem
of constructing a ``generic limit ambient space'' where the generic limit ideal
should live. From this point of view, the advantage in the smooth and 
locally complete intersection cases
is that one can easily reduce to work with one fixed polynomial ring, so that
in the end, in order to 
construct generic limit ideals, it suffices to take a field extension and complete 
the ring at the origin. 

In this paper we consider the case of bounded singularities. 
We say that a collection of germs of algebraic varieties $(X_i,x_i)$ has 
\emph{(formally) bounded singularities} if there are integers 
$m$ and $N$ such that for every $i$ there is
a subscheme $Y_i$ in $\AAA^N$ whose ideal is defined by equations of degree $\leq m$,
and a point $y_i\in Y_i$ such that $\widehat{\cO_{X_i,x_i}}\simeq\widehat{\cO_{Y_i,y_i}}$.
Equivalently, this means that 
there exists a morphism $\pi\colon {\mathcal Y}\to T$, such that for every $i$ there 
is a closed point $t_i\in T$ and a point $y_i$ in the fiber ${\mathcal Y}_{t_i}$ over $t_i$
such that $\widehat{\cO_{X_i,x_i}}\simeq\widehat{\cO_{{\mathcal Y}_{t_i},y_i}}$.
At the moment this appears to be the
most general context where the approach through generic limits can be put to work, 
and it seems likely that new methods will be needed to attack the conjecture in its general form. 

We can now state our main result. 

\begin{theorem}\label{main}
If $\Gamma\subset\RR_{>0}$ is a DCC set, then
there is no infinite strictly increasing sequence of log canonical thresholds
$$\lct_{(X_1,\frB_1),x_1}(\frA_1)<\lct_{(X_2,\frB_2),x_2}(\frA_2)<\ldots,$$
where the $(X_i,x_i)$ form a collection of klt varieties with bounded singularities
and $\frA_i$, $\frB_i$ are $\Gamma$-ideals such that all pairs $(X_i,\frB_i)$ are log canonical.
\end{theorem}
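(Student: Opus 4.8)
The plan is to argue by contradiction, following the generic limit strategy of \cite{dFM,Kol1,dFEM} but now carried out \emph{relative to a family of ambient germs}. Suppose we had an infinite strictly increasing sequence of mixed log canonical thresholds as in the statement, with limit $c_\infty$. Since the germs $(X_i,x_i)$ have bounded singularities, there is a single morphism $\pi\colon\mathcal Y\to T$ of finite type over $k$ such that each $\widehat{\cO_{X_i,x_i}}$ is isomorphic to the completion of $\mathcal Y$ at a point $y_i$ lying over a closed point $t_i\in T$. By passing to an irreducible component of $T$ that contains infinitely many of the $t_i$, and replacing $\mathcal Y$ by its base change to (the completion of) the local ring at a suitable point, I would first reduce to the situation of a single complete local ring $(R,\frm)$ over a (possibly large) field, into which all the $\widehat{\cO_{X_i,x_i}}$ embed compatibly; more precisely, after a further generic-limit construction on the base one obtains a flat family whose generic-limit fiber is a fixed complete local ring $R_\infty$ together with the generic limit ideal.

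The technical heart is the construction of the generic limit of the pairs $(\frA_i,\frB_i)$ living on this fixed ambient ring. Here I would mimic the construction in \cite{dFEM}: after passing to a subsequence, truncate the ideals $\fra_j$ appearing in $\frA_i,\frB_i$ modulo $\frm^t$ for each $t$, use that there are only finitely many possibilities at each finite level (because the Hilbert function is bounded once we fix the ambient germ up to bounded data, using Noetherianity of the relevant Hilbert scheme / the boundedness hypothesis), and extract by a diagonal argument a ``coherent'' system of truncations defining ideals $\fra_{j,\infty}$ in $R_\infty$. The DCC hypothesis on $\Gamma$ guarantees, again after passing to a subsequence, that the exponents stabilize to limits $q_{j,\infty}\in\Gamma$ and that each $q_{j,i}\ge q_{j,\infty}$ for $i\gg0$, so that the limit $\RR$-ideals $\frA_\infty$, $\frB_\infty$ make sense; this is exactly the place where DCC is used, just as in \cite{dFEM}.

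The next step is the semicontinuity/comparison of log canonical thresholds under the generic limit: one shows that $\lct_{(R_\infty,\frB_\infty),\frm}(\frA_\infty)\le c_\infty$, but that in fact equality holds and moreover this value is \emph{attained}, i.e. the pair $(R_\infty,\frA_\infty^{c_\infty}\frB_\infty)$ is log canonical but not klt at the closed point. Concretely, one compares log resolutions: a log resolution of the truncated data on $\mathcal Y$ specializes to give, for $i\gg0$, a resolution computing $\lct_{(X_i,\frB_i),x_i}(\frA_i)$, and conversely a divisor over $R_\infty$ computing $\lct_{(R_\infty,\frB_\infty),\frm}(\frA_\infty)$ descends to divisors over the $X_i$ with the same log discrepancies in the limit. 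This forces the sequence $\lct_{(X_i,\frB_i),x_i}(\frA_i)$ to be eventually constant equal to $c_\infty$, contradicting strict monotonicity. The main obstacle, and the part requiring the most care, is the first reduction: making sense of a single ``generic limit ambient space'' $R_\infty$ carrying the generic limit ideal, and verifying that klt-ness of $X_i$ (which involves the canonical divisor, not just the ideals) passes to $R_\infty$ so that mixed log canonical thresholds on $R_\infty$ are defined and behave well under specialization. This is precisely the difficulty highlighted in the introduction, and the boundedness hypothesis is what makes it tractable here, since it confines the ambient germs to a single finite-type family.
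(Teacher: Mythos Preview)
Your high-level strategy matches the paper's, and you correctly identify the main new difficulty: constructing a generic limit of the ambient germs simultaneously with the ideals, and showing the limit is klt (the paper does this by embedding everything in a fixed $\AAA^N$ via the defining ideals $\frp_i$, taking a generic limit of the tuple $(\frp_i,\fra_i^{(j)})$ in $\cG\times(\cH_d)^r$, and using a result on Gorenstein index in families to control the limit).

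However, your endgame has a real gap. You write that a divisor $E$ over $R_\infty$ computing the generic-limit threshold ``descends to divisors over the $X_i$ with the same log discrepancies in the limit,'' forcing the thresholds to be eventually constant. Two things go wrong here. First, $E$ need not have center equal to the closed point of $\Spec R_\infty$; if its center is positive-dimensional, the comparison with truncations mod $\frm^d$ and the spreading-out to $E_i$ over $W_i$ simply do not work. The paper handles this by first replacing $\frA_\infty$ with $\frm^t\cdot\frA_\infty$ for a suitable $t\ge 0$ so that the threshold is unchanged but is now computed by a divisor with zero-dimensional center (Lemma~\ref{lem_red_zero_dimensional}). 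Second, once $E_i$ computes the threshold of the \emph{truncated} ideal $\prod_j(\ofra_i^{(j)}+\ofrm_i^d)^{p_j}$, you still need to remove the truncation. This is not a formal limit statement: it is Proposition~\ref{semicont}, the semicontinuity result (ultimately relying on \cite{BCHM} via \cite{Kol1,dFEM}) saying that if $\fra$ and $\frb$ agree modulo $\{u:\ord_E(u)>\ord_E(\fra)\}$ and $E$ computes $\lct(\fra)$ with center the closed point, then $\lct(\fra)=\lct(\frb)$. Your sketch does not account for either step, and without the zero-dimensional-center reduction the second step cannot even be invoked.

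Two smaller points. Your DCC inequality is backwards: after passing to a subsequence the exponents are non-decreasing, so $q_{j,i}\le q_{j,\infty}$, which is the direction needed to get $\lct$ of the ``limit-exponent'' $\RR$-ideal $\le c_i$. And the paper does not treat the mixed case uniformly: it first proves the unmixed case ($\frB_i=\cO_{X_i}$) as above, and then deduces the general case by applying the unmixed ACC to the $\RR$-ideals $\frB_i\cdot\frA_i^{c}$, where $c=\lim c_i$, concluding that infinitely many $i$ satisfy $\lct(W_i,\frB_i\cdot\frA_i^{c})\ge 1$, hence $c_i\ge c$.
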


The result in \cite{dFEM} covers the case when the $X_i$ are assumed to be nonsingular
(or more generally, locally complete intersection),
and $\Gamma=\ZZ_{>0}$. In the nonsingular setting, the first result in this direction was obtained 
in \cite{dFM}, where it was shown using ultrafilter constructions that every limit of invariants of the form $\lct(X_i, \fra_i)$, with $\dim(X_i)=n$ for all $i$, is again an invariant of the same form. Koll\'{a}r replaced in \cite{Kol1} the ultrafilter approach
by a generic limit construction, using more traditional algebro-geometric methods. In addition, 
using the results in \cite{BCHM} he proved a semicontinuity property for log canonical thresholds
that allowed him to treat a special case of the conjecture, namely when the log canonical threshold of the limit is computed by a divisor with center at one point. In \cite{dFEM} we gave a more elementary proof of Koll\'{a}r's semicontinuity result, and showed that this can be used in fact to deduce the full statement of the above theorem when all $X_i$ are nonsingular (and $\Gamma=\ZZ_{>0}$).
More general cases, such as when the $X_i$ are locally complete intersection or have
quotient singularities, were deduced from the nonsingular case in a direct fashion.

Regarding the statement of Theorem~\ref{main}, we emphasize that while the category of varieties
with bounded singularities is quite large, it is not large enough for the applications to the Minimal Model Program. More precisely, it is not the case that, for example, terminal $\QQ$-factorial singularities in a fixed dimension have bounded singularities. This is simply because one can construct 
quotient singularities that satisfy these properties, and of arbitrary embedding dimension. Furthermore, as Miles Reid pointed out to us, starting from dimension five there are families of terminal
singularities of arbitrary high embedding dimension that are not nontrivial quotients by finite group actions.

The proof of Theorem~\ref{main} is based on the generic limit construction from
\cite{Kol1}, suitably adapted to our setting.
The main novelty in the proof of the above theorem 
is the simultaneous construction of a generic limit of the ambient spaces and of the ideal
sheaves involved. 
The fact that the embedded dimension of the varieties is bounded is necessary 
in order to have the ``generic limit variety'' being defined
by an ideal in a power series ring with finitely
many variables. We use the fact that the singularities themselves are bounded
to guarantee that such limit variety 
is normal, $\Q$-Gorenstein, and klt. 

There are however several technical difficulties that
arise when working in this general setting. 
Some of these technical points are of a more general nature, 
not necessarily related to the main topic of the paper, and therefore
their treatment will be deferred to the end of the paper. This
will result in two appendices.

The first technical difficulty comes from the fact that we work with 
singular varieties in the formal setting. 
It has became evident since \cite{dFM} that the formal setting is very natural 
when dealing with this kind of problems.
However, while in the previous papers \cite{dFM,Kol1,dFEM}
the formal setting always occurred 
at regular points, in the present paper we need to work with
possibly singular schemes of finite type over complete local rings. 

The generic limit construction that is essential for proving Theorem~\ref{main} 
requires us to develop the 
theory of log canonical pairs in a slightly more general framework than usual: 
working with $\R$-ideals 
on schemes of finite type over a complete Noetherian ring (of characteristic zero). 
This is 
the case since starting with a sequence as in the theorem, 
the generic limit construction provides 
an ambient space that is the spectrum of a complete local ring. While this ring is the
completion 
at a closed point of a scheme of finite type over a field, we need to consider ideals in this 
ring that do not come via completion from the finite type level. 
This will require us to extend the basic results on log 
canonical thresholds to this setting.

In particular, in order to have the notion of relative canonical class in this setting, we 
will need to develop a theory of sheaves of differentials that is adapted to this context.
This part is extracted from the main body of the paper and forms the first appendix. 

The second appendix is devoted to another technical complication 
arising in the proof of Theorem~\ref{main}. The problem comes 
from the fact that we need to be able to bound the Gorenstein index of the varieties 
appearing in the statement in order to conclude that the limit variety is
$\Q$-Gorenstein. 
To this end, we prove a general result on the behavior of the Gorenstein index
in bounded families (see Theorem~\ref{thm:Q-Gor}). 
This result is of independent interest, 
and a slightly simplified version of it can be stated as follows.

\begin{theorem}
Let $f \colon X \to T$ be a morphism of normal complex varieties such that
every fiber of $f$ is a normal variety with rational singularities.
Then there is a nonempty Zariski open subset $T^\o \subseteq T$ and a positive integer $s$
such that for every point $x \in X$ with $t = f(x)\in T^\o$, the fiber $X_t$
is $\Q$-Gorenstein at $x$ if and only if the total space $X$ is $\Q$-Gorenstein at $x$;
furthermore, in this case both $sK_X$ and $sK_{X_t}$ are Cartier at $x$.
\end{theorem}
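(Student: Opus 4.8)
The plan is to reduce the statement to a question about a single coherent sheaf and then apply generic flatness. First I would recall that $X$ is normal, so $K_X$ is defined as a Weil divisor class, represented by the sheaf $\omega_X$ (the dualizing sheaf, which here agrees with $i_* \omega_{X^{\mathrm{sm}}}$ for $i$ the inclusion of the smooth locus); similarly each fiber $X_t$ is normal, hence has its own $\omega_{X_t}$. The condition that $X$ is $\Q$-Gorenstein at $x$ with $sK_X$ Cartier is the condition that the reflexive power $\omega_X^{[s]} := (\omega_X^{\otimes s})^{\vee\vee}$ is invertible in a neighborhood of $x$; likewise for $X_t$ with $\omega_{X_t}^{[s]}$. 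So the whole statement becomes: after shrinking $T$, for all $s$ the locus where $\omega_X^{[s]}$ is invertible meets each fiber in exactly the locus where $\omega_{X_t}^{[s]}$ is invertible, and moreover there is one $s$ that works uniformly.

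The key geometric input is that $\omega$ commutes with base change to fibers when the fibers have rational singularities and the morphism is flat (which, since $T$ is a smooth variety — or at least after shrinking so that $X \to T$ is flat with reduced fibers — we may assume). Concretely, after replacing $T$ by a dense open subset I would arrange: (i) $f$ is flat with normal fibers having rational singularities; (ii) the relative dualizing complex $\omega_{X/T}^{\bullet}$ is concentrated in one degree, giving a relative dualizing sheaf $\omega_{X/T}$ that is flat over $T$ and whose formation commutes with arbitrary base change $T' \to T$ — in particular $\omega_{X/T}|_{X_t} \cong \omega_{X_t}$ for every $t \in T^\o$; and (iii) using rational singularities of the fibers, that $\omega_{X/T}$ itself restricted to fibers is already reflexive, so no double-dual correction is needed fiberwise. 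The rational singularities hypothesis is exactly what makes $\omega_{X_t}$ behave well (e.g. $\omega_{X_t}$ has depth $\ge 2$ properties that survive) and what lets one control the reflexivization.

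Next I would handle the reflexive powers. For a fixed $s$, consider the natural map $(\omega_{X/T})^{\otimes s} \to \omega_{X/T}^{[s]}$. Its cokernel $Q_s$ is a coherent sheaf on $X$; by generic flatness applied to $f_* Q_s$ and to the relevant sheaves, there is a dense open $T_s \subseteq T$ over which the formation of $\omega_{X/T}^{[s]}$ commutes with restriction to fibers, so that $\omega_{X/T}^{[s]}|_{X_t} \cong \omega_{X_t}^{[s]}$. Then "$X$ is $\Q$-Gorenstein with $sK_X$ Cartier at $x$" $\iff$ $\omega_{X/T}^{[s]}$ is locally free at $x$ $\iff$ (by Nakayama, using the base-change isomorphism) $\omega_{X_t}^{[s]}$ is locally free at $x$ $\iff$ "$X_t$ is $\Q$-Gorenstein with $sK_{X_t}$ Cartier at $x$." This gives the equivalence for each individual $s$, over the corresponding open set.

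The remaining — and I expect main — obstacle is the uniformity in $s$: a priori the open sets $T_s$ and the good integer $s$ could degenerate as $s$ grows, since one cannot intersect infinitely many opens. The resolution is to bound the Gorenstein index in the family. I would fix a log resolution $g \colon \widetilde X \to X$ of the generic fiber and spread it out over a dense open $T^\o$ to a simultaneous resolution of all fibers; writing $K_{\widetilde X/T} = g^* K_{X/T} + \sum a_i E_i$ relatively over $T^\o$, the discrepancies $a_i$ are constructible functions of $t$, hence take finitely many values over a possibly smaller dense open, and the condition "$sK_{X_t}$ Cartier at $x$" translates (via the discrepancies along the divisors through $g^{-1}(x)$ and the structure of the local class group) into divisibility constraints with uniformly bounded data. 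Concretely, one shows $s K_{X_t}$ is Cartier at $x$ for a single $s$ depending only on bounds for these discrepancies and for the torsion of the local Picard/class groups along the fibers — which are themselves uniformly bounded after shrinking $T$, since $\Pic$ and $\Cl$ of the local rings of the fibers form constructible data. Taking this $s$ and the intersection of the (finitely many relevant) good opens $T_1, \dots, T_s$ — together with $T^\o$ — yields the desired open set and integer. I would structure the write-up so that the delicate bounding of discrepancies and of the torsion is isolated as a lemma, then assembled in the final paragraph.
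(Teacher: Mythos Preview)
Your reductions in the first three paragraphs are sound and match the paper's preliminary lemmas: Noetherianity gives a single $s$ with $sK_X$ Cartier on the $\QQ$-Gorenstein locus of $X$, and generic base change for the reflexive sheaf $\cO(sK_X)$ yields the equivalence of ``$sK_X$ Cartier at $x$'' and ``$sK_{X_t}$ Cartier at $x$'' over a dense open. The real content, as you correctly isolate, is the implication that if $mK_{X_t}$ is Cartier at $x$ for some $m$ possibly depending on $(t,x)$, then $X$ is $\QQ$-Gorenstein at $x$. Here your proposal has a genuine gap. The formula $K_{\widetilde X/T}=g^*K_{X/T}+\sum a_iE_i$ presupposes that $K_{X/T}$ is already $\QQ$-Cartier, which is exactly what is to be proved; discrepancies of the fibers are not even defined at points where $K_{X_t}$ fails to be $\QQ$-Cartier, so their alleged constructibility cannot be used to carve out the $\QQ$-Gorenstein locus. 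And your assertion that ``$\Pic$ and $\Cl$ of the local rings of the fibers form constructible data,'' hence have uniformly bounded torsion, is essentially a restatement of the theorem rather than a proof of it.

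The paper's argument for this step is substantively different and uses $k=\CC$ in an essential way. One fixes a resolution $g\colon Y\to X$ and stratifies $X$ by Verdier's theorem so that $g$ is \emph{topologically} locally trivial over each stratum; after shrinking $T$ one arranges that every stratum whose closure passes through $x$ also meets $X_t$. From $mK_{X_t}$ Cartier at $x$ one extracts, via an analytic criterion, a $g$-exceptional divisor $E$ on $Y$ whose restriction to $Y_t$ makes $\cO(mK_{Y_t}-E_t)^{\rm an}$ trivial near $x$, and one must show that the analytic line bundle $L=\cO(mK_Y-E)^{\rm an}$ is torsion on $g^{-1}(V)$ for a small contractible neighborhood $V\ni x$. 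If not, a result of Koll\'ar--Mori produces an exceptional curve $C\subset g^{-1}(V)$ with $(L\cdot C)\neq 0$, so $c^1(L)$ has nonzero image in $H^2(Y_{g(C)},\ZZ)$; topological local triviality along a path in the stratum containing $g(C)$ then transports this nonzero class to $H^2(Y_q,\ZZ)$ for some $q\in V\cap X_t$, contradicting the triviality of $L\vert_{Y_t}$. This transport of Chern classes through a topological trivialization is the mechanism your constructibility sketch lacks; in effect it \emph{is} the proof that the relevant local Picard data is locally constant along strata.
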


A variant of the result 
holding over arbitrary algebraically closed fields of characteristic zero
is given in Theorem~\ref{Gor_index}. This will be the version of the result
applied in the proof of Theorem~\ref{main}.

\noindent{\bf Acknowledgment}. We are grateful to Mark de Cataldo for comments 
and suggestions related to the material in Appendix~B, and to J\'{a}nos Koll\'{a}r and Miles
Reid for sharing with us some interesting examples of singularities.

\section{Log canonical pairs on schemes of finite type over a complete local ring}

Throughout this section, let $k$ be a field of characteristic
zero, and let $R = k\llbracket x_1,\ldots,x_n\rrbracket$. 
Our goal is to define and prove the basic properties of
log canonical and log terminal pairs when the ambient space
is a scheme of finite type over $R$. 
Of course, the definitions parallel to the ones in the case of 
schemes of finite type over fields.
The main difference is that in order to define the relative canonical class, 
we need to work with sheaves of {\it special differentials} as defined in Appendix~A. 
The theory of special differentials
enables us to define the notion of relative canonical divisor in this setting
(see in particular Lemma~\ref{lem3_1}). 
Once we have the notion of relative canonical class, 
the theory of singularities of pairs can be built in the same way
as in the case of schemes of finite type over a field, for which we refer to \cite{Kol2}. 
However, we will need to work with $\RR$-ideals (as opposed to $\RR$-divisors), 
hence we give all definitions in this setting. 

In the following, let $X$ be a scheme of finite type over $R$. An \emph{$\RR$-ideal}
on $X$ is a formal product $\frA=\prod_{i=1}^r\fra_i^{p_i}$,
where $r$ is a positive integer, each $\fra_i$ is a nonzero (coherent) ideal sheaf on $X$, 
and the $p_i$ are positive real numbers. 
We call $\frA$ a \emph{proper} $\RR$-ideal if there is $i$ with $\fra_i\neq\cO_X$.
If the $p_i$ are required to lie in some
subset $\Gamma\subseteq\RR_{>0}$, then $\frA$ is called a $\Gamma$-ideal.

The notions we are interested in are invariant with respect to the equivalence relation
that identifies two $\RR$-ideals if they have the same order of vanishing along 
all divisorial valuations.
More precisely, we consider all
proper birational morphisms $\pi\colon Y\to X$, with $Y$ normal, and all 
prime divisors $E$ on $Y$. Every such $E$ defines a valuation 
$\ord_E$ of the function field of $X$. The image of $E$ on $X$ is the \emph{center}
of $E$ on $X$, and it is denoted by $c_X(E)$.
If $\fra$ is an ideal sheaf on $X$, then 
$\ord_E(\fra)$ is the minimum of $\ord_E(w)$, where $w$ varies over the sections of 
$\fra$ defined at the generic point of $c_X(E)$. If $\frA=\prod_{i=1}^r\fra_i^{p_i}$ is
an $\RR$-ideal on $X$, then
$$\ord_E(\frA):=\sum_{i=1}^r p_i\cdot\ord_E(\fra_i).$$
The equivalence relation identifies $\frA$ and $\frA'$ 
whenever $\ord_E(\frA)=\ord_E(\frA')$ for every $E$ as above. 

\begin{remark}
By Theorem~\ref{thm1_temkin} below, whenever we consider a valuation $\ord_E$ as above,
we may assume that the model $Y$ on which $E$ lies is nonsingular.
\end{remark}

\begin{example}\label{example1}
If $\frA=\prod_{i=1}^r\fra_i^{p_i}$, where all $p_i\in\QQ$, then $\frA$ is identified with
$\frb^{1/m}$, where $m$ is a positive integer such that $mp_i\in\ZZ$ for all $i$, and
$\frb=\prod_{i=1}^r\fra_i^{mp_i}$. Furthermore, two such $\QQ$-ideals $\frb_1^{1/m}$ and $\frb_2^{1/m}$ 
are identified if and only if for some positive integer $q$, the ideals $\frb_1^q$ and 
$\frb_2^q$ have the same integral closure.
\end{example}

The product of $\RR$-ideals is defined in the obvious way, by concatenating the factors.
Similarly, if $\frA=\prod_{i=1}^r\fra_i^{p_i}$ is as above, and $q\in\RR_{>0}$, 
then $\frA^q:=\prod_{i=1}^r\fra_i^{qp_i}$. Note that these operations preserve the above equivalence classes.

Suppose now that $X$ is normal, and let $\frA=\prod_{i=1}^r\fra_i^{p_i}$ 
be an $\RR$-ideal on $X$. 
A \emph{log resolution} of $(X,\frA)$ is a
log resolution for the pair $(X,\prod_{i=1}^r\fra_i)$. 
Recall that this is a proper birational morphism $\pi\colon Y\to X$, with
$Y$ nonsingular, such that the exceptional locus of $\pi$ and the inverse images of the subschemes $V(\fra_i)$
are Cartier divisors, and all these divisors have simple normal crossings. 
Since we are in characteristic zero, the existence of log resolutions in our setting is guaranteed by the results in
\cite{Temkin}. For completeness, we explain how to get log resolutions from the
results in \emph{loc. cit.} The two theorems below hold for arbitrary 
quasi-excellent schemes\footnote{A scheme is quasi-excellent if it is covered by affine open subsets of the form 
$\Spec(A_i)$, with each $A_i$ a quasi-excellent ring; the definition of quasi-excellent ring is
similar to that of excellent ring, but one does not require the ring to be universally catenary,
see \cite[p. 260]{Matsumura}.},
so they hold in particular in our setting, for schemes of finite type over $R$. 

\begin{theorem}\label{thm1_temkin}(\cite{Temkin})
For every integral scheme $X$ of finite typer over $R$, there is a proper birational morphism
$\pi\colon Y\to X$ with $Y$ nonsingular. Furthermore, we may construct $\pi$ such that
it is an isomorphism over $X_{\rm reg}$.
\end{theorem}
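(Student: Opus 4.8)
The plan is to deduce the statement from Temkin's desingularization theorem for Noetherian quasi-excellent schemes over $\QQ$ \cite{Temkin}; the substance is entirely contained there, and what remains is to check that the hypotheses hold in our setting and to read off the last assertion from the functoriality of the construction.

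First I would verify that $X$ belongs to the class of schemes covered by \cite{Temkin}. Since $R=k\llbracket x_1,\ldots,x_n\rrbracket$ is a complete Noetherian local ring, it is excellent; hence any scheme $X$ of finite type over $R$ is again excellent, in particular Noetherian and quasi-excellent (so that, for instance, $X_{\reg}$ is open in $X$), and it is a scheme over $\QQ$ because $\QQ\subseteq k\subseteq R$. Thus Temkin's theorem applies to the integral --- in particular reduced --- scheme $X$, and produces a proper (in fact projective) birational morphism $\pi\colon Y\to X$ with $Y$ regular; this is the desired resolution. Here ``nonsingular'' must be understood as ``regular'', since $R$ is not a field and there is no ambient smoothness available, and this is exactly what Temkin's construction delivers.

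It then remains to arrange that $\pi$ is an isomorphism over $X_{\reg}$. For this I would use that Temkin's desingularization is functorial with respect to regular morphisms: for any regular morphism $f\colon X'\to X$, the desingularization of $X'$ is canonically the base change $Y\times_X X'\to X'$. The open immersion $X_{\reg}\hookrightarrow X$ is \'{e}tale, hence a regular morphism, so $\pi^{-1}(X_{\reg})\to X_{\reg}$ is the functorial desingularization of $X_{\reg}$; but $X_{\reg}$ is already regular, so its functorial desingularization is the identity, and therefore $\pi$ restricts to an isomorphism over $X_{\reg}$, as required. (This also follows directly from the construction, since Temkin's procedure blows up only centers contained in the singular locus.)

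I do not expect any genuine obstacle: the only subtle point is that the functoriality invoked must be with respect to \emph{regular} morphisms, and not merely smooth ones, so that it applies to the open immersion $X_{\reg}\hookrightarrow X$. If one wishes to avoid invoking \cite{Temkin}, an alternative route is to spread $X$ out over a field: by N\'{e}ron--Popescu, the regular homomorphism $k[x_1,\ldots,x_n]_{(x_1,\ldots,x_n)}\to R$ is a filtered colimit of smooth $k$-algebras, so $X$ descends to a scheme of finite type over one of them, hence to a scheme of finite type over $k$; one then applies a functorial resolution of singularities over $k$ (Hironaka, in the form of Bierstone--Milman or W\l odarczyk) and base changes back to $X$, the functoriality again yielding the isomorphism over $X_{\reg}$.
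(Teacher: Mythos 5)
Your proposal is correct and takes the same route as the paper, which simply cites Temkin's resolution theorem for quasi-excellent Noetherian schemes over $\QQ$ and remarks (earlier, in Appendix A) that $R$ is excellent, so that finite-type $R$-schemes are quasi-excellent. You spell out the verification of the hypotheses and the reason $\pi$ can be arranged to be an isomorphism over $X_{\reg}$ (functoriality for regular morphisms applied to the open immersion, or the fact that Temkin's blow-up centers lie in the singular locus), none of which the paper makes explicit; these are exactly the right points to fill in, and the alternative via N\'eron--Popescu is also a valid variant.
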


\begin{theorem}\label{thm2_temkin}(\cite{Temkin})
If $X$ is a nonsingular scheme as above, and $D$ is an effective divisor on $X$,
then there is a proper birational morphism $\pi\colon Y\to X$ such that $Y$ is nonsingular
and $\pi^*(D)$ has simple normal crossings. Furthermore, we may assume that $\pi$
is an isomorphism over $X\smallsetminus {\rm Supp}(D)$.
\end{theorem}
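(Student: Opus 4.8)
The plan is to derive the statement from Temkin's results on resolution of singularities for quasi-excellent Noetherian schemes over $\QQ$, in the same spirit as the authors' treatment of Theorem~\ref{thm1_temkin}. The first observation is that any scheme $X$ of finite type over $R = k\llbracket x_1,\ldots,x_n\rrbracket$ is Noetherian and quasi-excellent: $R$ is a complete Noetherian local ring, hence excellent, and algebras of finite type over a quasi-excellent ring are again quasi-excellent, so $X$ lies in the class of schemes to which the results of \cite{Temkin} apply. Since $X$ is assumed nonsingular, $D$ is a Cartier divisor, and the property that $\pi^*(D)$ have simple normal crossings is a condition on the support of $\pi^*(D)$ alone; thus it is enough to find a proper birational morphism $\pi\colon Y\to X$ with $Y$ nonsingular such that the reduced inverse image of $\Supp(D)$ is a simple normal crossings divisor, and such that $\pi$ is an isomorphism over $X\smallsetminus\Supp(D)$.

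Next I would invoke Temkin's functorial embedded desingularization (equivalently, principalization of ideal sheaves) applied to the closed subscheme $Z=\Supp(D)\subseteq X$. This produces a morphism $\pi\colon Y\to X$ that is projective and is a composition of blowups along regular centers, such that $Y$ is regular and the union of the exceptional locus of $\pi$ with the reduced inverse image of $Z$ is a divisor with simple normal crossings; in particular $\pi^*(D)$ has simple normal crossings. Each center of the sequence of blowups is nowhere dense (it is contained in the closed locus where the pair fails to be simple normal crossings, which here is contained in $Z\subsetneq X$), so $\pi$ does not change the function field and is therefore birational; it is proper since it is projective.

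Finally, for the last assertion I would use the functoriality of Temkin's construction with respect to open immersions: over the open set $U=X\smallsetminus\Supp(D)$ the pair $(X,Z)$ restricts to $(U,\emptyset)$, which is already a simple normal crossings pair since $U$ is regular, so the functorial resolution is the identity over $U$; equivalently, every center in the sequence of blowups is disjoint from $U$. Hence $\pi$ restricts to an isomorphism over $X\smallsetminus\Supp(D)$, and the reduced total transform of $D$ equals the reduced inverse image of $\Supp(D)$, which is simple normal crossings, as required. The only point demanding care — and the nearest thing to an obstacle — is confirming that being of finite type over $R$ really does place $X$ within the scope of the results of \cite{Temkin}, and extracting from \emph{loc. cit.} the refined form of embedded resolution that is an isomorphism over the locus where the pair is already simple normal crossings; both are available in the literature, and no new geometric input is needed.
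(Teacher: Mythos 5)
The paper itself does not give a proof of Theorem~\ref{thm2_temkin} --- it is stated as a citation to \cite{Temkin}, preceded only by the remark that the theorems of \emph{loc.~cit.} hold for quasi-excellent schemes and hence for schemes of finite type over $R$. Your proposal is a correct and faithful unpacking of exactly that citation: you observe that $R$ is excellent and hence $X$ quasi-excellent, invoke Temkin's embedded desingularization (principalization) for $Z=\Supp(D)$, note that SNC-ness is a condition on the support alone, and use functoriality (centers contained in the non-SNC locus) to get the isomorphism over $X\smallsetminus\Supp(D)$; this matches the paper's intent and adds the detail the paper leaves implicit.
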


Let us explain how to combine these two theorems in order to get log resolutions. 
Suppose that $(X,\frA)$ is a pair as above, with $\frA=\prod_{i=1}^r\fra_i^{p_i}$,
and $X$ normal. 
We first apply Theorem~\ref{thm1_temkin} to construct $\pi_1\colon Y_1\to X$
proper and birational, and such that $Y_1$ is nonsingular. Since $X$ is normal,
there is an open subset $U\subseteq X$ such that $\pi_1$ is an isomorphism over $U$,
and $Z:=X\smallsetminus U$ has ${\rm codim}(Z,X)\geq 2$. We note that if $\phi\colon Y\to
Y_1$ is proper and birational, with $Y$ nonsingular, and if $\phi^{-1}(\pi_1^{-1}(Z))$ is a 
divisor, then 
\begin{equation}\label{eq_exceptional_locus}
{\rm Exc}(\pi_1\circ\phi)={\rm Exc}(\phi)\cup\phi^{-1}(\pi_1^{-1}(Z)).
\end{equation}
In particular, this exceptional locus is a divisor (recall that ${\rm Exc}(\phi)$ is a 
divisor; this follows for instance from Lemma~\ref{lem3_1}). 

We blow-up successively along $\prod_{i=1}^r\fra_i$, and along the inverse image of $Z$,
to get $\pi_2\colon Y_2\to Y_1$. We now apply one more time Theorem~\ref{thm1_temkin}
to get a proper and birational morphism $\pi_3\colon Y_3\to Y_2$ with $Y_3$ nonsingular. 
Furthermore, we do this so that $\pi_3$ is an isomorphism over $(Y_2)_{\rm reg}$.
It follows from (\ref{eq_exceptional_locus}) that $E:={\rm Exc}(\pi_1\circ\pi_2\circ\pi_3)$
is an effective divisor on $Y_3$, and we have effective divisors $E_i$ on $Y_3$
such that $\fra_i\cdot\cO_{Y_3}=\cO_{Y_3}(-E_i)$. 
Furthermore, if $Z'=(\pi_1\circ\pi_2\circ\pi_3)^{-1}(Z)$, then by construction
\begin{equation}\label{eq_exceptional_locus2}
{\rm Supp}(Z')\subseteq 
{\rm Supp}(E)\subseteq {\rm Supp}(Z')\cup {\rm Supp}(E_1+\cdots+E_r).
\end{equation}
We apply Theorem~\ref{thm2_temkin}
to get a proper birational morphism $\pi_4\colon Y\to Y_3$ with $Y$ nonsingular,
and such that $\pi_4^*(Z'+E_1+\cdots+E_r)$ has simple normal crossings. Furthermore,
we may and will assume that this is an isomorphism over $Y_3\smallsetminus 
{\rm Supp}(Z'+E_1+\cdots+E_r)$. We let $\pi\colon Y\to X$ be the composition. 
Using (\ref{eq_exceptional_locus}) and (\ref{eq_exceptional_locus2}) we see that ${\rm Exc}(\pi)$ is a divisor,
and that it is contained in the support of $\pi_4^*(Z'+E_1+\cdots+E_r)$. Therefore $\pi$
is a log resolution of $(X,\frA)$. 
A similar argument can be used to show that any two log resolutions
of $(X,\frA)$ are dominated by a third one.

\bigskip

Suppose now that $X$ is $\QQ$-Gorenstein, and let
$\pi\colon Y\to X$ be a log resolution of a pair $(X,\frA)$,
where $\frA=\prod_{i=1}^r\fra_i^{p_i}$.
Let $K_{Y/X}$ be the relative canonical divisor as defined in Appendix~A
(cf. Lemma~\ref{lem3_1}).
Since $K_{Y/X}$ is supported on the exceptional locus, it follows that
there is  a simple normal crossings
divisor $\sum_{j=1}^{\ell}E_j$ on $Y$ with
\begin{equation}\label{numerical_invariants}
K_{Y/X}=\sum_{j=1}^{\ell}\kappa_jE_j,\quad\fra_i\cdot\cO_Y=
\cO_Y\Big(-\sum_{j=1}^{\ell}\alpha_{i,j}E_j\Big)\,\text{for}\,1\leq i\leq r.
\end{equation}
The pair $(X,\frA)$ is called \emph{log canonical} if 
\begin{equation}\label{ineq_invariants}
\kappa_j+1\geq\sum_{i=1}^r\alpha_{i,j}p_i=\ord_{E_j}(\frA)
\end{equation}
for all $j$. If all inequalities in (\ref{ineq_invariants}) are strict, the pair is 
\emph{Kawamata log terminal} (or
\emph{klt}, for short). If $\frA=\cO_X$, we simply say that $X$ is log canonical or klt, 
respectively.
Note that the definitions are independent of the representative for $\frA$
is our equivalence class.
The fact that the definition is independent of the log resolution follows in the same
way as in the
case of schemes of finite typer over a field. The key ingredients are given by Lemma~\ref{lem3_1} iii), and the fact that
any two log resolutions can be dominated by a third one.

\begin{remark}\label{rem3_2}
It follows from Remark~\ref{rem3_1} that if $X$ is nonsingular, then the log canonicity of 
a pair $(X,\fra)$ is independent of the $R$-scheme structure of $X$. Again, it is not clear to us
whether the same remains true if $X$ is singular (however, see Remark~\ref{rem_invariance}
below for one case when this holds, which is the one that concerns us most).
\end{remark}

Let $\frA=\prod_{i=1}^r\fra_i^{p_i}$ 
and $\frB=\prod_{i=1}^s\frb_i^{q_i}$ be $\RR$-ideals on $X$, with $\frA$ a proper ideal.
If the pair $(X,\frB)$ is log canonical, then we define the 
\emph{mixed log canonical threshold} $\lct_{(X,\frB)}(\frA)$ (written also as $\lct_{\frB}(\frA)$ when there
is no ambiguity about the ambient scheme) as the largest $c\geq 0$ such that
$(X,\frA^c\frB)$ is log canonical. If $\frB=\cO_X$, then we simply write $\lct(X,\frA)$ or 
$\lct(\frA)$, and we call it the \emph{log canonical threshold} of $\frA$.
If $\pi$ as above is a log resolution of $(X, \frA\cdot\frB)$, and if 
$\frb_i\cdot\cO_Y=
\cO_Y(-\sum_j\beta_{i,j}E_j)$ for $1\leq i\leq s$, then 
\begin{equation}\label{formula_lct}
\lct_{(X,\frB)}(\frA)=\min_j\frac{\kappa_j+1-\sum_{i=1}^s\beta_{i,j}q_i}{\sum_{i=1}^r\alpha_{i,j}p_i}.
\end{equation}
Note that since $\frA$ is assumed to be proper, there are
$i$ and $j$ such that $\alpha_{i,j}>0$, hence the above minimum is finite.
If $E_j$ is such that the minimum in (\ref{formula_lct}) is achieved, we say that $E$ \emph{computes} 
$\lct_{(X,\frB)}(\frA)$.

We also consider a local version of the above invariant. 
If $\frA=\prod_{i=1}^r\fra_i^{p_i}$ is an $\RR$-ideal on $X$, we denote by 
${\rm Supp}(\frA)$ the union of the closed subsets of $X$ defined by the ideals $\fra_i$. 
If $x\in {\rm Supp}(\frA)$, and $(X,\frB)$ is log canonical in some open neighborhood of $x$, 
then $\lct_{(X,\frB),x}(\frA)$ is the largest $c\geq 0$ such that $(X,\frA^c\frB)$ is log canonical in some neighborhood of $x$. If $\frB=\cO_X$, we write $\lct_x(X,\frA)$ or $\lct_x(\frA)$. 
Of course, $\lct_{(X,\frB),x}(\frA)$ can be described by a formula analogous to (\ref{formula_lct}),
in which the minimum is over those $j$ such that $x\in c_X(E_j)$.

For simplicity, we will state most of the basic properties of log canonical thresholds only in the
unmixed setting, since we will only need these versions.
The following lemma is a simple consequence of the formula
for the log canonical threshold in terms of a log resolution.

\begin{lemma}\label{simple_lemma}
Suppose that $X$ is log canonical, and let
$\frA=\prod_{i=1}^r\fra_i^{p_i}$ and $\frB=\prod_{i=1}^s\frb_i^{q_i}$
be proper $\RR$-ideals on $X$. If $s\leq r$, and
$\fra_i\subseteq\frb_i$ and $p_i\geq q_i$ for all $i\leq s$, then
$\lct(\frA)\leq\lct(\frB)$. A similar assertion holds for the local
version of log canonical thresholds.
\end{lemma}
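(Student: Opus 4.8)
The plan is to reduce everything to the formula \eqref{formula_lct} and the monotonicity of the numerators and denominators appearing there. First I would fix a log resolution $\pi\colon Y\to X$ that simultaneously resolves the pair $(X,\frA\cdot\frB)$; such a resolution exists by the discussion following Theorem~\ref{thm2_temkin}, and a single resolution can be used to read off the log canonical thresholds of both $\frA$ and $\frB$. Writing $K_{Y/X}=\sum_j\kappa_jE_j$ and $\fra_i\cdot\cO_Y=\cO_Y(-\sum_j\alpha_{i,j}E_j)$, $\frb_i\cdot\cO_Y=\cO_Y(-\sum_j\gamma_{i,j}E_j)$, we have by \eqref{formula_lct}
\[
\lct(\frA)=\min_j\frac{\kappa_j+1}{\sum_{i=1}^r\alpha_{i,j}p_i},\qquad
\lct(\frB)=\min_j\frac{\kappa_j+1}{\sum_{i=1}^s\gamma_{i,j}q_i},
\]
so it suffices to show that for each fixed $j$ appearing in the first minimum, the corresponding ratio is $\le$ some ratio appearing in the second minimum. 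The natural candidate is to compare with the same index $j$.

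The key step is then the inclusion-to-order translation: for each $i\le s$, the hypothesis $\fra_i\subseteq\frb_i$ gives $\ord_{E_j}(\fra_i)\ge\ord_{E_j}(\frb_i)$, i.e.\ $\alpha_{i,j}\ge\gamma_{i,j}$, directly from the definition of $\ord_{E_j}$ of an ideal as the minimum order of a local section. Combined with $p_i\ge q_i\ge 0$ for $i\le s$ and $p_i\ge 0$ for $s<i\le r$, this yields
\[
\sum_{i=1}^r\alpha_{i,j}p_i\;\ge\;\sum_{i=1}^s\alpha_{i,j}p_i\;\ge\;\sum_{i=1}^s\gamma_{i,j}q_i
\]
for every $j$. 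Since $\kappa_j+1\ge 0$ (which holds because $X$ is log canonical, so each numerator in \eqref{formula_lct} with $\frA=\cO_X$ is nonnegative, or more simply because $(X,\cO_X)$ log canonical forces $\kappa_j\ge-1$), it follows that for each $j$
\[
\frac{\kappa_j+1}{\sum_{i=1}^r\alpha_{i,j}p_i}\;\le\;\frac{\kappa_j+1}{\sum_{i=1}^s\gamma_{i,j}q_i},
\]
with the usual convention that a ratio with zero denominator is $+\infty$ and hence does not affect the minimum on either side; note that $\frA$ and $\frB$ being proper guarantees that at least one denominator on each side is positive, so both minima are finite. Taking the minimum over $j$ gives $\lct(\frA)\le\lct(\frB)$.

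For the local statement one runs the identical argument, but restricting the minima in \eqref{formula_lct} to those indices $j$ with $x\in c_X(E_j)$; since this is the same index set on both sides, the inequality is preserved. The only point requiring a little care — and the main (minor) obstacle — is bookkeeping around possibly-infinite ratios when some $\sum_i\gamma_{i,j}q_i=0$: one must check that dropping such terms from the right-hand minimum is harmless, which it is precisely because $\frB$ proper forces some other term to be finite, and the corresponding index is also present on the left (as $\frb_i\subseteq\frb_i$ trivially, so the relevant divisors $E_j$ meeting $\Supp(\frB)$ are among those resolved by $\pi$). Apart from this, the proof is entirely formal once \eqref{formula_lct} and the order-reversing behavior of $\ord_{E_j}$ under inclusion of ideals are in hand, and it parallels verbatim the corresponding fact for schemes of finite type over a field.
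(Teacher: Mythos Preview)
Your proposal is correct and is exactly the argument the paper has in mind: the paper gives no proof beyond the remark that the lemma ``is a simple consequence of the formula for the log canonical threshold in terms of a log resolution,'' and your computation via a common log resolution, the monotonicity of $\ord_{E_j}$ under ideal inclusion, and the nonnegativity of $\kappa_j+1$ (from $X$ log canonical) is precisely that simple consequence. The handling of possibly vanishing denominators and of the local version is also fine.
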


It is sometimes convenient to reduce the study of log canonical thresholds of $\RR$-ideals
to that of $\QQ$-ideals (hence to that of usual ideals). This can be done using the following two lemmas (the first one deals with the log canonical threshold, while the second one treats the divisors computing the log canonical threshold).

\begin{lemma}\label{lem_reduction}
Assume that $X$ is log canonical, and let
$\frA=\prod_{i=1}^r\fra_i^{p_i}$ be a proper $\RR$-ideal on $X$.
If $(p_{i,m})_{m\geq 1}$ are sequences of positive real numbers with $\lim_{m\to\infty}
p_{i,m}=p_i$ for every $i\leq r$, and if $\frA_m=\prod_{i=1}^r\fra_i^{p_{i,m}}$,
then $\lim_{m\to\infty}\lct(\frA_m)=\lct(\frA)$.
 A similar assertion
 holds for the local version of log canonical threshold $\lct_x(\frA)$.
\end{lemma}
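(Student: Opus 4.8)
The plan is to compute all of the thresholds in question on a single common log resolution and then reduce to an elementary fact about limits of minima of finitely many sequences of real numbers.

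First I would choose a log resolution $\pi\colon Y\to X$ of the pair $(X,\prod_{i=1}^r\fra_i)$; such a resolution exists by the construction following Theorem~\ref{thm2_temkin}, and by definition it is simultaneously a log resolution of $(X,\frA)$ and of every $(X,\frA_m)$, since all of these $\RR$-ideals are built from the same finite family of ideal sheaves $\fra_1,\dots,\fra_r$. Writing $K_{Y/X}=\sum_{j=1}^{\ell}\kappa_jE_j$ and $\fra_i\cdot\cO_Y=\cO_Y(-\sum_{j=1}^\ell\alpha_{i,j}E_j)$ as in (\ref{numerical_invariants}), with the $\alpha_{i,j}$ non-negative integers, formula (\ref{formula_lct}) gives
\[
\lct(\frA)=\min_{j\in J}\frac{\kappa_j+1}{\sum_{i=1}^r\alpha_{i,j}p_i},\qquad
\lct(\frA_m)=\min_{j\in J}\frac{\kappa_j+1}{\sum_{i=1}^r\alpha_{i,j}p_{i,m}},
\]
where $J=\{\,j\mid\alpha_{i,j}>0\text{ for some }i\,\}$. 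The key point is that $J$ is the \emph{same} index set in both formulas: since all $p_i$ and all $p_{i,m}$ are positive, one has $\sum_i\alpha_{i,j}p_i>0$ (resp. $\sum_i\alpha_{i,j}p_{i,m}>0$) precisely when $j\in J$; moreover $J\neq\emptyset$ because $\frA$ is proper, so $\fra_i\neq\cO_X$ for some $i$ and hence $\alpha_{i,j}>0$ for some $j$.

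Now for each fixed $j\in J$ the denominator $\sum_{i=1}^r\alpha_{i,j}p_{i,m}$ is a fixed positive linear combination of the sequences $(p_{i,m})_m$, so it converges to $\sum_{i=1}^r\alpha_{i,j}p_i>0$; hence the $j$-th term of the minimum converges to $(\kappa_j+1)/\sum_i\alpha_{i,j}p_i$. Since $J$ is finite and nonempty, and the minimum of finitely many convergent sequences converges to the minimum of the limits — indeed $|\min_{j\in J}a_j(m)-\min_{j\in J}b_j|\le\max_{j\in J}|a_j(m)-b_j|$ — we conclude $\lim_{m\to\infty}\lct(\frA_m)=\lct(\frA)$. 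For the local statement one repeats the same argument with $J$ replaced by $J_x=\{\,j\in J\mid x\in c_X(E_j)\,\}$; this set is still nonempty since $x\in\Supp(\frA)$ forces every point of $\pi^{-1}(x)$ to lie on some $E_j$ with $\alpha_{i,j}>0$, so $x\in c_X(E_j)$ for such a $j$.

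There is no serious obstacle here. The only points deserving a moment's care are the existence of a log resolution valid for all the $\frA_m$ at once — which is immediate since they involve the same ideals $\fra_i$ — and the observation that the index set $J$ (respectively $J_x$) over which the minimum is taken is independent of $m$, which is where the positivity of the exponents and the properness of $\frA$ enter.
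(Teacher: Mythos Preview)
Your proof is correct and follows exactly the approach the paper indicates: the paper's own proof consists of the single sentence ``The assertion follows immediately from formula~(\ref{formula_lct}),'' and you have carefully spelled out the details of that deduction.
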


\begin{proof}
The assertion follows immediately from formula (\ref{formula_lct}).
\end{proof}

\begin{lemma}\label{lem_reduction2}
Suppose that $X$ is log canonical, and let
$E$ be a divisor computing $\lct_{x}(\frA)=\lambda$, for some $\RR$-ideal 
$\frA=\prod_{i=1}^r\fra_i^{p_i}$ on $X$, containing $x$ in its support.
Then one can
find sequences of rational numbers $(p_{i,m})_{m\geq 1}$
with $\lim_{m\to\infty}p_{i,m}=\lambda p_i$, and such that 
if we put $\frA_m=\prod_{i=1}^r\fra_i^{p_{i,m}}$, then
$\lct_{x}(\frA_m)=1$ and
$E$ computes 
$\lct_{x}(\frA_m)$ for every $m$.
\end{lemma}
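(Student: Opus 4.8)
The plan is to use the explicit formula~(\ref{formula_lct}) for the log canonical threshold in terms of a fixed log resolution, exactly as in the (easy) proof of Lemma~\ref{lem_reduction}, but now keeping track of which divisor achieves the minimum. First I would fix a log resolution $\pi\colon Y\to X$ of $(X,\frA)$, with notation as in~(\ref{numerical_invariants}), so that $K_{Y/X}=\sum_j\kappa_jE_j$ and $\fra_i\cdot\cO_Y=\cO_Y(-\sum_j\alpha_{i,j}E_j)$, and so that the divisor $E$ in the statement appears among the $E_j$, say $E=E_{j_0}$. By hypothesis $x\in c_X(E_{j_0})$ and
$$
\lambda=\lct_x(\frA)=\min_{j\,:\,x\in c_X(E_j)}\frac{\kappa_j+1}{\sum_{i=1}^r\alpha_{i,j}p_i},
$$
with the minimum achieved at $j=j_0$. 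The strategy is to perturb the exponents $p_i$ so that the scaled $\RR$-ideal $\frA^\lambda$ becomes a genuine $\QQ$-ideal of log canonical threshold exactly $1$ at $x$, still computed by $E_{j_0}$.

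The key observation is that for $j=j_0$ we have the \emph{equality} $\sum_i\alpha_{i,j_0}(\lambda p_i)=\kappa_{j_0}+1$, an integer, so we should choose the rational approximations $p_{i,m}\to\lambda p_i$ to preserve this one equation on the nose, while only decreasing all the other ratios slightly. Concretely, I would first pick, for each $m$, rational numbers $p_{i,m}^{\,0}$ with $p_{i,m}^{\,0}\le\lambda p_i$, $\lim_m p_{i,m}^{\,0}=\lambda p_i$, and with strict inequality for at least one index $i$ with $\alpha_{i,j_0}>0$; then
$$
\sum_{i=1}^r\alpha_{i,j_0}\,p_{i,m}^{\,0}\;<\;\kappa_{j_0}+1,
$$
so we may rescale by the single rational factor $c_m:=(\kappa_{j_0}+1)\big/\sum_i\alpha_{i,j_0}p_{i,m}^{\,0}>1$ and set $p_{i,m}:=c_m\,p_{i,m}^{\,0}\in\QQ$. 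These still converge to $\lambda p_i$ (since $c_m\to1$), and by construction $\sum_i\alpha_{i,j_0}p_{i,m}=\kappa_{j_0}+1$ for every $m$, i.e. $\frac{\kappa_{j_0}+1}{\sum_i\alpha_{i,j_0}p_{i,m}}=1$. For the remaining divisors, $\lim_m\frac{\kappa_j+1}{\sum_i\alpha_{i,j}p_{i,m}}=\frac{\kappa_j+1}{\lambda\sum_i\alpha_{i,j}p_i}\ge1$, with strict inequality whenever $j$ did not already compute $\lambda$; for the finitely many other minimizing $j$ we can, if necessary, shrink the $p_{i,m}^{\,0}$ slightly further (staying $\le\lambda p_i$) so that the strict inequality $\sum_i\alpha_{i,j}p_{i,m}^{\,0}\le\sum_i\alpha_{i,j}(\lambda p_i)=\kappa_j+1$ already gives ratio $\ge1$, hence after multiplying by $c_m\ge1$ the ratio is still $\ge1$ — one checks this is compatible with the normalization at $j_0$ because multiplying \emph{all} exponents by the same $c_m$ cannot make the $j_0$-ratio exceed $1$ once it equals $1$. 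Taking $m$ large enough that all the limits are respected, formula~(\ref{formula_lct}) (in its local form, minimum over $j$ with $x\in c_X(E_j)$) gives $\lct_x(\frA_m)=1$ with the minimum attained at $j_0$, so $E=E_{j_0}$ computes it.

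The main obstacle is the bookkeeping around the \emph{other} divisors that happen to compute $\lambda$ at $x$: after the uniform rescaling by $c_m$ one must be sure their ratios do not drop below $1$. The clean way around this is to do the perturbation in two stages — first replace $\frA$ by $\frA^\lambda$ and note all ratios are $\ge1$ with equality possibly at several $j$'s; then approximate from below so that \emph{every} such ratio becomes $>1$ except we force exact equality at the single chosen $j_0$ by the rescaling factor $c_m$. Since $c_m\to1^+$ and there are only finitely many divisors on the fixed resolution $Y$, for $m\gg0$ all non-$j_0$ ratios stay $>1$; this is where having fixed one log resolution with $E$ among its components, rather than varying resolutions, makes the argument finite and elementary. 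Everything else is the same routine manipulation of~(\ref{formula_lct}) used in Lemma~\ref{lem_reduction}.
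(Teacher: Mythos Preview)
Your approach has a genuine gap in the handling of the other divisors $E_{j_1}$ (with $j_1\neq j_0$) that also compute $\lambda$. The claim ``hence after multiplying by $c_m\ge1$ the ratio is still $\ge1$'' is backwards: multiplying the exponents by $c_m>1$ \emph{increases} $\sum_i\alpha_{i,j_1}p_{i,m}$ and therefore \emph{decreases} the ratio $(\kappa_{j_1}+1)/\sum_i\alpha_{i,j_1}p_{i,m}$, so it can drop below $1$. Your attempted fix in the last paragraph does not resolve this: for such a $j_1$ the pre-rescaling ratio $(\kappa_{j_1}+1)/\sum_i\alpha_{i,j_1}p_{i,m}^{\,0}$ tends to $1$ at the same time as $c_m\to1^+$, and which effect wins depends on the direction of approximation. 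For a concrete failure, take $r=2$, $\kappa_{j_0}=\kappa_{j_1}=0$, $\alpha_{1,j_0}=\alpha_{2,j_1}=1$, $\alpha_{2,j_0}=\alpha_{1,j_1}=0$, and $\lambda p_1=\lambda p_2=1$; if you approximate with $p_{1,m}^{\,0}=1-1/m$ and $p_{2,m}^{\,0}=1-1/m^2$, then $c_m=m/(m-1)$ and $p_{2,m}=c_m\,p_{2,m}^{\,0}=(m+1)/m>1$, so the $j_1$-ratio is $m/(m+1)<1$ and $\lct_x(\frA_m)<1$.

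The paper's argument avoids this entirely by rephrasing the problem geometrically. After restricting to a neighborhood where $x\in c_X(E_j)$ for all $j$, consider the rational polyhedron
\[
P=\Big\{(u_1,\ldots,u_r)\ \Big|\ \kappa_j+1\ge\sum_{i}\alpha_{i,j}u_i\ \text{for all }j\Big\}.
\]
The point $(\lambda p_1,\ldots,\lambda p_r)$ lies on the face $P_E=P\cap\{\kappa_{j_0}+1=\sum_i\alpha_{i,j_0}u_i\}$, which is itself a rational polyhedron, hence has dense rational points. Any sequence of rational points in $P_E$ converging to $(\lambda p_1,\ldots,\lambda p_r)$ automatically satisfies \emph{all} the inequalities defining $P$ (so $\lct_x(\frA_m)\ge1$) together with equality at $j_0$ (so $\lct_x(\frA_m)=1$, computed by $E$). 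This sidesteps the rescaling issue completely: membership in $P$ is built into the choice of approximating points, rather than something to be verified after the fact.
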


\begin{proof}
Let $\pi\colon Y\to X$ be a log resolution of $\frA$ such that $E$ is a divisor on $Y$. 
With the notation
in (\ref{numerical_invariants}), after restricting to a suitable open neighborhood of $x$,
we may assume that $x\in c_X(E_j)$ for all $j$.
Consider the rational polyhedron
$$P=\big\{(u_1,\ldots,u_r)\mid\kappa_j+1\geq\sum_{i=1}^r\alpha_{i,j}u_i\text{ for all }j\big\}.$$ 
 If $E=E_{j_0}$, then we see that $(\lambda p_1,\ldots,\lambda p_r)$ lies on the face $P_E$ 
of $P$ defined by $\kappa_{j_0}+1=\sum_i\alpha_{i,j_0}u_i$.
Since $P_E$ is itself a rational polyhedron, it follows that $(\lambda p_1,\ldots,\lambda p_r)$ can be written as the limit of a sequence $(p_{1,m},\ldots,p_{r,m})\in P_E\cap\QQ^r$. It is clear that this
sequence satisfies our requirements.
\end{proof}

The following lemma allows one to reduce the study of the log canonical threshold to the
case when this invariant is computed by a divisor with center equal to a closed point.
The proof is the same as that of \cite[Lemma~5.2]{dFEM}, so we omit it.

\begin{lemma}\label{lem_red_zero_dimensional}
Suppose that $X$ is log canonical, $\frA$ is a proper $\RR$-ideal on $X$, and
$x\in X$ is a closed point defined by the ideal $\frm_x$.
If $c=\lct_x(\frA)$, then there is a nonnegative real number $t$
such that $c=\lct_x(\frm_x^t\cdot\frA)$, and this log canonical threshold is computed by
a divisor $E$ over $X$ having center equal to $x$.
\end{lemma}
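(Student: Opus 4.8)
The idea is the standard ``perturbation by a power of the maximal ideal'' trick, carried over from \cite[Lemma~5.2]{dFEM} to the present setting. First I would take a divisor $F$ over $X$ with center $c_X(F)=x$ that computes $\lct_x(\frA)$; such an $F$ exists by the local version of formula (\ref{formula_lct}), since $x\in {\rm Supp}(\frA)$ forces at least one of the $\alpha_{i,j}$ to be positive among the $E_j$ with $x\in c_X(E_j)$. If the center of $F$ is already the point $x$ we are done with $t=0$, so we may assume $\dim c_X(F)>0$; the goal is to enlarge $\frA$ by a factor $\frm_x^t$ so that the new pair is still log canonical at $x$ with threshold $c$, but now the equality defining log canonicity is achieved along a divisor whose center is $x$.

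\emph{Key steps, in order.} (1) Pass to a log resolution $\pi\colon Y\to X$ of $(X,\frA\cdot\frm_x)$ on which $F=E_{j_0}$ appears; write $K_{Y/X}=\sum_j\kappa_jE_j$, $\fra_i\cO_Y=\cO_Y(-\sum_j\alpha_{i,j}E_j)$, and $\frm_x\cO_Y=\cO_Y(-\sum_j\gamma_jE_j)$, restricting to a neighborhood of $x$ so that every $E_j$ appearing has $x\in c_X(E_j)$ (we can do this since the $E_j$ not containing $x$ in their center are irrelevant to the local threshold). (2) Consider, for $t\geq 0$, the function
\[
g(t)=\min_j\frac{\kappa_j+1-t\gamma_j}{\sum_{i=1}^r\alpha_{i,j}p_i},
\]
which by the local analogue of (\ref{formula_lct}) equals $\lct_x(\frm_x^t\cdot\frA^{-1}\cdot\frA)$—more precisely $g(t)$ is such that $\lct_x(\frm_x^{s}\cdot\frA)$, as a function of $s$, is governed by these same ratios. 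I would instead argue directly: the pair $(X,\frm_x^t\cdot\frA^c)$ is log canonical at $x$ iff $\kappa_j+1\geq t\gamma_j+c\sum_i\alpha_{i,j}p_i$ for all $j$, i.e. iff $(t,c)$ lies in a rational polyhedron $Q$ in the $(t,c)$-plane. At $t=0$ the largest admissible $c$ is $c=\lct_x(\frA)=:c_0$, and the point $(0,c_0)$ lies on a face of $Q$ because $F=E_{j_0}$ computes the threshold. (3) Now travel along the boundary of $Q$: increase $t$ from $0$ while keeping $(t,c)$ on $\partial Q$ with $c=c(t)$ the largest admissible value. As long as $c(t)=c_0$ fails to change, nothing is gained; but I claim that for the right choice of $t$ the binding constraint becomes one with $\gamma_j>0$, i.e. one coming from a divisor meeting the fiber over $x$ nontrivially—ultimately a divisor with center exactly $x$. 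The point is that the constraint $j$ with center $=c_X(F)\supsetneq\{x\}$ has $\gamma_{j_0}=0$ is impossible since $x\in c_X(F)$ means $\frm_x$ does vanish along $F$, so in fact $\gamma_{j_0}>0$ whenever $x\in c_X(E_{j_0})$; one then checks the binding divisor at the appropriate $t$ can be taken to have zero-dimensional center by a further resolution/blow-up at $x$ if needed.

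\emph{Main obstacle.} The delicate point—exactly as in \cite{dFEM}—is step (3): showing that one can choose $t$ so that the divisor realizing the minimum for $\frm_x^t\cdot\frA$ has center precisely the point $x$, and not merely a higher-dimensional subvariety through $x$. This requires the observation that after blowing up repeatedly above $x$, the coefficient $\gamma_j$ of $\frm_x$ grows at least as fast as $\kappa_j+1$ along divisors extracted over $x$, so that among all divisors the minimum ratio is eventually attained at one with center $\{x\}$; equivalently, $\lct_x(\frm_x^t\cdot\frA)$ is a piecewise-linear, eventually-decreasing function of $t$ whose ``last'' linear piece is governed by a divisor over $x$. Since the relevant input—the formula (\ref{formula_lct}) and the theory of the relative canonical divisor $K_{Y/X}$ via special differentials—has been set up above in exactly the generality needed, the argument of \cite[Lemma~5.2]{dFEM} transcribes verbatim, and that is why we omit the details.
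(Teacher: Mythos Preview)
Your bottom line---that the argument of \cite[Lemma~5.2]{dFEM} carries over verbatim---is exactly what the paper says, and that is the intended proof. However, your sketch of \emph{why} it works contains a genuine error that inverts the key mechanism. You assert that ``$x\in c_X(F)$ means $\frm_x$ does vanish along $F$, so in fact $\gamma_{j_0}>0$ whenever $x\in c_X(E_{j_0})$.'' This is false: $\ord_{E_j}(\frm_x)$ is computed at the \emph{generic point} of $c_X(E_j)$, and $\frm_x$ vanishes only at the closed point $x$. Hence $\gamma_j=\ord_{E_j}(\frm_x)>0$ if and only if $c_X(E_j)=\{x\}$; when $c_X(E_j)$ is a positive-dimensional subvariety through $x$, one has $\gamma_j=0$. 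This dichotomy is precisely what drives the argument, not an obstruction to it.

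With that correction, the proof is clean and does not require any further blow-ups or growth estimates on $\kappa_j$ versus $\gamma_j$. On a log resolution of $(X,\frA\cdot\frm_x)$ the divisors $E_j$ with $x\in c_X(E_j)$ split into those with $\gamma_j>0$ (center $=\{x\}$) and those with $\gamma_j=0$ (center strictly larger). The function
\[
h(t)=\lct_x(\frm_x^t\cdot\frA)=\min_j\frac{\kappa_j+1}{t\gamma_j+\sum_i\alpha_{i,j}p_i}
\]
is continuous and nonincreasing, equals $c$ at $t=0$, and tends to $0$ as $t\to\infty$ because at least one $\gamma_j>0$ (the resolution principalizes $\frm_x$). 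Let $t_0=\sup\{t\ge 0: h(t)=c\}<\infty$. By continuity $h(t_0)=c$, and the minimum at $t_0$ must be attained by some $E_j$ with $\gamma_j>0$: otherwise only the $\gamma_j=0$ terms (which are constant in $t$) would be binding, and $h$ would remain equal to $c$ just beyond $t_0$. That $E_j$ has center $\{x\}$, which is what was wanted.
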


We will mainly be interested in the case when the ambient variety is either a scheme of finite type over a field, or 
the spectrum of the completion of the local ring of such a scheme at a closed point. The following proposition gives the compatibility of the log canonical threshold with respect to 
taking such a completion. Suppose that $X$ is a scheme of finite type over $k$ and
$x\in X$ is a closed point, 
and consider $g\colon Z=\Spec(\widehat{\cO_{X,x}})\to X$. 
If $\frA=\prod_{i=1}^r\fra_i^{p_i}$ is an 
$\RR$-ideal on $X$, we denote by $\widehat{\frA}$ the $\RR$-ideal  
$\prod_{i=1}^r(\fra_i\cO_Z)^{p_i}$.
We consider the Cartesian diagram
\begin{equation}\label{diag2}
\xymatrix{
W \ar[r]^h \ar[d]_f & Y \ar[d]^\pi \\
Z={\rm Spec}(\widehat{\cO_{X,x}}) \ar[r]^(.7)g & X
}
\end{equation}
where $\pi\colon Y\to X$ is a proper birational morphism with $Y$ nonsingular.

\begin{remark}\label{rmk:diagram}
Since $g$ is a regular morphism (see \cite[Chapter 32]{Matsumura}), it follows that 
so is $h$. Recall that a morphism of Noetherian schemes is regular if it is flat, and has
geometrically regular fibers. Since $g$ is regular, we see that $X$ is normal around $x$
if and only if $Z$ is normal, and since $h$ is regular, we see that $W$ is nonsingular. 
\end{remark}

\begin{proposition}\label{prop3_2}
With the above notation, the following hold:
\begin{enumerate}
\item[i)] The pair
$(X,\frB)$ is log canonical ${\rm (}$respectively, klt${\rm )}$ in a neighborhood of $x$ if 
and only if $(Z,\widehat{\frB})$ is log canonical ${\rm (}$respectively, klt${\rm )}$. 
\item[ii)] If $(X,\frB)$ is log canonical in a neighborhood of $x\in {\rm Supp}(\frA)$, 
then $\widehat{\frA}$
is a proper $\RR$-ideal and 
$$\lct_{(X,\frB),x}(\frA)=\lct_{(Z,\widehat{\frB})}(\widehat{\frA}).$$
\item[iii)] Under the assumptions in ${\rm ii)}$, if $E_i$ and $F$ are as in Remark~\ref{divisor_over_point}, then 
$F$ computes $\lct_{(X,\frB),x}(\frA)$ if and only if $E_i$ computes 
$\lct_{(Z,\widehat{\frB})}(\widehat{\frA})$.
\end{enumerate}
\end{proposition}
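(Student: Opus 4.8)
The plan is to reduce everything to a statement about a single log resolution, using the Cartesian diagram \eqref{diag2} and the regularity of $g$ and $h$ recorded in Remark~\ref{rmk:diagram}. First I would fix a log resolution $\pi\colon Y\to X$ of $(X,\frA\cdot\frB)$ that is an isomorphism over an open set containing no component of $X$ in codimension $\le 1$, and consider its base change $f\colon W\to Z$. The key geometric input is that $h\colon W\to Y$ is a regular morphism, hence $W$ is nonsingular; moreover $f$ is proper and birational, and the pullbacks of the divisors $E_j$ on $Y$ to $W$ again form a simple normal crossings divisor (pulling back via a regular morphism preserves the normal-crossings condition), so $f\colon W\to Z$ is a log resolution of $(Z,\widehat{\frA}\cdot\widehat{\frB})$.

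The heart of the matter is to compare the numerical invariants in \eqref{numerical_invariants} upstairs and downstairs. On the ideal side this is immediate: $\fra_i\cO_W = (\fra_i\cO_Y)\cO_W$, so if $\fra_i\cO_Y = \cO_Y(-\sum_j\alpha_{i,j}E_j)$ then $\fra_i\cO_Z\cdot\cO_W = \cO_W(-\sum_j\alpha_{i,j}E_j')$ where $E_j' := h^{-1}(E_j)$, with the same coefficients $\alpha_{i,j}$; similarly for the $\beta_{i,j}$. On the discrepancy side I would invoke the theory of special differentials from Appendix~A — in particular the base-change compatibility of the relative canonical divisor along the regular morphism $g$ (this is exactly what Lemma~\ref{lem3_1} and Remark~\ref{rem3_1} are designed to give) — to conclude that $Z$ is again $\QQ$-Gorenstein (using that $X$ is normal near $x$ and $g$ is regular) and that $K_{W/Z} = \sum_j \kappa_j E_j'$ with the same $\kappa_j$. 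Here one must be a little careful that $\pi$ being an isomorphism in codimension $\le 1$ over $X$ carries over so that $K_{W/Z}$ is still supported on the exceptional locus of $f$; this follows because $h$ maps the exceptional locus of $f$ into that of $\pi$ and vice versa (the diagram is Cartesian and $g$ is faithfully flat). I expect this comparison of relative canonical classes across the completion to be the main obstacle, since it is where the formal/singular subtleties of Appendix~A genuinely enter; once it is in place the rest is formal.

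With the invariants matched, part (i) is immediate from the definition \eqref{ineq_invariants}: the inequalities $\kappa_j + 1 \ge \sum_i \beta_{i,j} q_i$ defining log canonicity (resp.\ the strict inequalities defining klt) are literally the same condition on $Y$ over the locus above $x$ as on $W = Y\times_X Z$, once one observes that $f$ is surjective onto $Z$ and every $E_j'$ with $x\in c_X(E_j)$ is nonempty — and conversely every divisor on $Z$ is dominated by one appearing in $f$, so testing on this one resolution suffices by the resolution-independence already established in the section. For part (ii), the formula \eqref{formula_lct} for $\lct_{(X,\frB),x}(\frA)$ takes the minimum over those $j$ with $x\in c_X(E_j)$, while $\lct_{(Z,\widehat\frB)}(\widehat\frA)$ is the minimum of the same quotients $\frac{\kappa_j+1-\sum_i\beta_{i,j}q_i}{\sum_i\alpha_{i,j}p_i}$ over those $j$ with $c_Z(E_j')\ne\emptyset$; since $c_Z(E_j') = g^{-1}(c_X(E_j))$ is nonempty exactly when $x\in c_X(E_j)$ (as $g$ has image the generic point together with $x$, and $c_X(E_j)$ is closed and nonempty in its closure through $x$ precisely in that case), the two index sets agree and the minima coincide. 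Properness of $\widehat\frA$ follows since some $\fra_i\ne\cO_X$ near $x$ forces $\fra_i\cO_Z\ne\cO_Z$ by faithful flatness of $g$.

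Finally, part (iii) falls out of the same bookkeeping: in the setup of Remark~\ref{divisor_over_point} the divisor $F$ over $X$ with center $x$ corresponds to a component $E_i$ of the fiber — more precisely to the components $E_j'$ of $h^{-1}(E_i)$ on $W$ — all of which share the same $(\kappa, \alpha, \beta)$ data as $E_i$ itself. Hence the minimum in \eqref{formula_lct} is attained at $F$ on the $X$-side if and only if it is attained at $E_i$ on the $Z$-side, because both sides compute the identical quotient. The only point requiring a line of justification is that at least one component $E_j'$ of $h^{-1}(E_i)$ has nonempty center on $Z$, which again holds because $x\in c_X(E_i)$ and $g$ is surjective onto a neighborhood-theoretic sense, i.e.\ the closed point lies in the image of $g$. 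This completes the proof, modulo the Appendix~A base-change statement flagged above.
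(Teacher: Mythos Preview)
Your approach is essentially the same as the paper's: base-change a log resolution along the regular morphism $g$, observe that the numerical invariants (orders of the $\fra_i,\frb_i$ and the coefficients of $K_{Y/X}$) are preserved under $h^*$, and read off the three assertions from formula~\eqref{formula_lct}. The paper's proof is more terse but follows exactly this line.

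One correction: the base-change identity $K_{W/Z}=h^*(K_{Y/X})$ and the fact that $Z$ is $\QQ$-Gorenstein are \emph{not} what Lemma~\ref{lem3_1} and Remark~\ref{rem3_1} provide (the former only defines $K_{Y/X}$, the latter concerns independence of the $R$-structure when the base is nonsingular). The result you need is Proposition~\ref{prop3_1}, which is precisely the statement that $K_Z=g^*(K_X)$, that $rK_X$ is Cartier near $x$ iff $rK_Z$ is Cartier, and that $h^*(K_{Y/X})=K_{W/Z}$. Also, in your discussion of part~(iii) the roles are slightly tangled: in Remark~\ref{divisor_over_point}, $F$ is the prime divisor on $Y$ and the $E_i$ are the irreducible components of $h^*(F)$ on $W$, not the other way around; once this is straightened out, your argument goes through.
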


\begin{proof}
Note that if $\pi$ is a log resolution
of $(X,\frA\cdot\frB)$, then $f$ is a log resolution of $(Z,\widehat{\frA}\cdot\widehat{\frB})$
because $h$ is a regular morphism. Furthermore, if $F$
is a nonsingular prime divisor on $Y$ such that $x\in c_X(F)$, 
and $E$ is a component of $h^*(F)$, then $E$ is a nonsingular prime divisor on $W$.
It is clear that
the coefficient of $F$ in a simple normal crossings divisor $D$ on $Y$ is equal to the coefficient of 
$E$ in $h^*(D)$. We now deduce the assertions in the proposition from Proposition~\ref{prop3_1}.
\end{proof}

\begin{remark}\label{rem_invariance}
In the setting of the proposition, it follows from Proposition~\ref{prop3_1} that the divisor
$K_{W/Z}$ does not depend on the presentation of $\widehat{\cO_{X,x}}$
via Cohen's Structure Theorem. Furthermore, if $\frA'$ is an arbitrary $\RR$-ideal
on $Z$ (not necessarily coming from $X$), and if we consider a log resolution $W'\to Z$
of $(Z,\frA')$, then it follows from Lemma~\ref{lem3_2} and Remark~\ref{rem3_1}
that $K_{W'/Z}$ is independent
of the presentation of $\widehat{\cO_{X,x}}$. Therefore, the (mixed) log canonical thresholds
on $Z$ are independent of this presentation.
\end{remark}

\begin{remark}
Note that in the setting of the proposition, if $X$ is nonsingular, then the conclusion of the
proposition also holds if the localization is  at a non-closed point. Indeed, when we deal with nonsingular schemes, then we do not need to consider $\cO(K_X)$, 
as the divisors $K_{Y/X}$ and $K_{W/Z}$
can be computed using the $0^{\rm th}$ Fitting ideals of the corresponding sheaves of relative differentials, and $h^*\Omega_{Y/X}
\simeq\Omega_{W/Z}$.
\end{remark}

The following lemma concerns the behavior of singularities of pairs for 
schemes of finite type over a field under the extension of the ground field. In particular,
it allows us to reduce the study of singularities of such  pairs to the case when the ground field is algebraically closed.

\begin{lemma}\label{alg_closed}
Let $X$ be a normal scheme of finite type over a field $k$. If $K$ is a
field extension of $k$, and if $\phi\colon \overline{X}=X\times_{\Spec(k)}{\Spec(K)}
\to X$ is the projection,
then for every $\RR$-ideal $\frA=\prod_{i=1}^r\fra_i^{p_i}$ on $X$, and every
$\overline{x}\in \overline{X}$ and $x=\phi(\overline{x})$, we have:
\begin{enumerate}
\item[i)] $rK_X$ is Cartier at $x$ if and only if $rK_{\overline{X}}$ is Cartier at 
$\overline{x}$.
\item[i)] The pair $(X,\frA)$ is log canonical ${\rm (}$respectively, klt${\rm )}$
 in some neighborhood of $x$ if and only if $(\overline{X},\overline{\frA})$
is log canonical ${\rm (}$respectively, klt${\rm )}$ 
in some neighborhood of $\overline{x}$, where 
$$\overline{\frA}=\prod_{i=1}^r(\fra_i\cO_{\overline{X}})^{p_i}.$$
\item[iii)] If $X$ is log canonical at $x$, then $\lct_x(X,\frA)=\lct_{\overline{x}}(\overline{X},\overline{\frA})$.
\item[iv)] If $F$ is a divisor that computes $\lct_x(X,\frA)$, and if $E$ is a component of 
the divisor $\overline{F}=F\times_{\Spec(k)}\Spec(K)$ on $\overline{X}$ whose center contains 
$\overline{x}$, then
$E$ computes $\lct_{\overline{x}}(\overline{X},\overline{\frA})$.
\end{enumerate}
\end{lemma}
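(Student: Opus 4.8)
The plan is to reduce everything to the behavior of a single log resolution under the base field extension, together with the base change of sheaves of differentials used to define $K_X$. First I would recall that since $k$ has characteristic zero and $X$ is of finite type over $k$, a log resolution $\pi\colon Y\to X$ of $(X,\prod_i\fra_i)$ exists in the usual sense; one may in fact take $\pi$ to be a composition of blow-ups along smooth centers over which the relevant loci are already simple normal crossings. Base changing by $\phi$ produces $\overline\pi\colon \overline Y=Y\times_{\Spec k}\Spec K\to \overline X$. The key point is that $\phi$ (and likewise $Y\times_{\Spec k}\Spec K\to Y$) is a regular morphism: it is flat, and its fibers are $\Spec$ of a field extension of a perfect field in characteristic zero, hence geometrically regular. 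Consequently $\overline Y$ is nonsingular, $\overline X$ is normal, and the preimages of $V(\fra_i\cO_{\overline X})$ and the exceptional locus remain simple normal crossings divisors; so $\overline\pi$ is a log resolution of $(\overline X,\overline{\frA})$. This is essentially the same mechanism already exploited in the proof of Proposition~\ref{prop3_2}, with the completion morphism $g$ replaced by the base change $\phi$.

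Next I would address part i) and the numerical invariants. Since $X$ is normal and $\QQ$-Gorenstein issues are local, write $\omega_X=\cO_X(K_X)$ for the canonical sheaf (the reflexive hull of $\Omega^{\dim X}_{X/k}$); base change along the flat morphism $\phi$ commutes with reflexive hulls and with the formation of the canonical sheaf, so $\phi^*\omega_X\simeq\omega_{\overline X}$, and $rK_X$ is Cartier at $x$ if and only if $(\phi^*\omega_X)^{[r]}$ is locally free at $\overline x$, i.e.\ if and only if $rK_{\overline X}$ is Cartier at $\overline x$ (using that $\phi$ is faithfully flat on the relevant local rings, so local freeness descends). Granting this, the relative canonical divisor $K_{\overline Y/\overline X}$ is the pullback of $K_{Y/X}$ under $Y\times_{\Spec k}\Spec K\to Y$: indeed $\omega_{Y/X}$ pulls back compatibly, and the discrepancy coefficients $\kappa_j$ are therefore unchanged, as are the coefficients $\alpha_{i,j}$ since $\fra_i\cO_{\overline Y}$ is just the pullback of $\fra_i\cO_Y$. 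Thus the inequalities \eqref{ineq_invariants} defining log canonicity and klt are literally the same on $Y$ and on $\overline Y$; the only subtlety is the passage from ``everywhere'' to ``in a neighborhood of $x$ (resp.\ $\overline x$)'', which is handled by noting that $c_{\overline X}(E)\ni\overline x$ forces $c_X(\phi(E))\ni x$ for a component $E$ of the pullback of $E_j$, and conversely each $E_j$ with $x\in c_X(E_j)$ has a component of its pullback whose center contains $\overline x$ (because $\phi$ is surjective on fibers). This gives ii), and then iii) and iv) follow by combining formula \eqref{formula_lct} with the invariance of all the $\kappa_j,\alpha_{i,j},\beta_{i,j}$ and the matching of centers.

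The main obstacle I expect is the compatibility $\phi^*\omega_X\simeq\omega_{\overline X}$ together with the statement that $r$-Cartier-ness of the canonical class is detected after the field extension; this is where one genuinely uses that the sheaf of (special) differentials of Appendix~A behaves well under the base change, rather than merely the elementary fact that log resolutions survive. Concretely one must check that forming the reflexive power $\cO_X(rK_X)$ and restricting to $\overline X$ commute, which reduces to the flatness of $\phi$ plus the fact that the locus where $\omega_X$ fails to be invertible has codimension $\ge 2$ and pulls back to the corresponding locus on $\overline X$ (again using regularity of $\phi$ to control codimensions and normality). Everything else — regularity of the base change, preservation of simple normal crossings, invariance of discrepancies, and the neighborhood bookkeeping — is parallel to arguments already carried out for Proposition~\ref{prop3_2}, so I would state those briefly and spend the bulk of the write-up on the canonical-sheaf base change and its consequence for the Gorenstein index.
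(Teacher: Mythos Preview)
Your proposal is correct and follows essentially the same route as the paper's own proof: both argue that $\phi$ is a regular morphism, base-change a log resolution, identify $\phi^*\omega_X\simeq\omega_{\overline X}$ (the paper phrases this via $\Omega_{\overline X/K}\simeq\phi^*\Omega_{X/k}$ and pullback of reflexive sheaves), deduce $K_{\overline Y/\overline X}=\psi^*K_{Y/X}$, and then match centers to handle the local statements. The only minor remark is that for schemes of finite type over a field ordinary K\"ahler differentials already do the job, so invoking the special differentials of Appendix~A is unnecessary here.
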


\begin{proof}
Note that the fibers of $\phi$ are disjoint unions of zero-dimensional, reduced schemes.
Since we are in characteristic zero and $\phi$ is flat, we deduce from this fact that
 $\phi$ is regular. 
In particular, $\overline{X}$ is normal (though it might be disconnected), 
and $\phi^{-1}(X_{\rm reg})=\overline{X}_{\reg}$. It is also easy to deduce that
if $\cF$ is a reflexive sheaf on $X$, then $\phi^*(\cF)$ is reflexive, and 
$\cF$ is generated by one element around $x$ if and only if $\phi^*(\cF)$ is generated by one
element around $\overline{x}$. Since 
$\Omega_{\overline{X}/\overline{k}}\simeq\phi^*(\Omega_{X/k})$, this implies that
we can take 
$\phi^*(K_X)=K_{\overline{X}}$, and $rK_X$ is Cartier at $x$ if and only if
$rK_{\overline{X}}$ is Cartier at $\overline{x}$.

Suppose now that $X$ is $\QQ$-Gorenstein, and
let $f\colon Y\to X$ be a log resolution of $(X,\frA)$. If $\overline{Y}=Y\times_{\Spec(k)}\Spec(K)$, then we have a Cartesian diagram
$$
\xymatrix{
\overline{Y} \ar[d]_g \ar[r]^\psi & Y \ar[d]^f \\
\overline{X} \ar[r]^\phi & X .
}
$$
If $f$ is an isomorphism over $U\subseteq X$, then $g$ is an isomorphism over
the dense open subset $\phi^{-1}(U)$ of $\overline{X}$.
Since $\psi$ is regular, arguing as in the proofs of Propositions~\ref{prop3_1}
and \ref{prop3_2}, we see that $g$ is a log resolution of $(\overline{X},\overline{\frA})$,
and we have $K_{\overline{Y}/\overline{X}}=\psi^*(K_{Y/X})$. 
Note also that if $E$ is a prime nonsingular divisor on $Y$ such that
$x\in c_X(E)$, then there is a component $F$ of $\overline{E}=\psi^*(E)$ such that
$\overline{x}\in c_{\overline{X}}(F)$. For every such $F$, the valuation 
$\ord_F$ restricts to the valuation $\ord_E$ on the function field of $X$.
The remaining assertions in the proposition
are easy consequences of these observations.
\end{proof}

\bigskip

We now give some further properties of log canonical thresholds that will be used in the proof of our main result. These generalize corresponding results for schemes of finite type over a field, and for usual ideals. Let us fix the notation. In what follows $X$ is a log canonical scheme of finite type over a field $k$. Let $x\in X$ be a closed point with $\dim(\cO_{X,x})=n$, and let $g\colon Z={\rm Spec}(\widehat{\cO_{X,x}})\to X$
be the canonical morphism. We have seen in Proposition~\ref{prop3_2} that if $\frA$
is an $\RR$-ideal on $X$ such that $x\in {\rm Supp}(\frA)$, then
$\lct(Z,\widehat{\frA})=\lct_x(X,\frA)$. The following lemma allows us to approximate every log canonical threshold on $Z$ by log canonical thresholds of pull-backs of $\RR$-ideals on $X$.

\begin{proposition}\label{prop_limit}
Let $\frB=\prod_{i=1}^r\frb_i^{q_i}$ be an $\RR$-ideal on $Z$. If $\frm$ is the ideal
defining the closed point on $Z$, and if we put 
$\frB_d=\prod_{i=1}^r(\frb_i+\frm^d)^{q_i}$, then 
$$\lim_{d\to\infty}\lct(Z,\frB_d)=\lct(Z,\frB).$$
Furthermore, if there is a divisor $E$ over $Z$ computing $\lct(Z, \frB)$ and with center 
equal to the closed point, then $\lct(Z, \frB_d)=\lct(Z, \frB)$ for $d\gg 0$.
\end{proposition}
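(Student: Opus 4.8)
The plan is to fix a log resolution $\pi\colon W\to Z$ of $(Z,\frB)$ and read off both the log canonical threshold and its limit from the explicit formula (\ref{formula_lct}). Write $K_{W/Z}=\sum_j\kappa_jE_j$ and $\frb_i\cdot\cO_W=\cO_W(-\sum_j\beta_{i,j}E_j)$. The key point is that, for a fixed $j$, if $d$ is large enough compared to $\ord_{E_j}(\frm)$ and the $\beta_{i,j}$, then $\ord_{E_j}(\frb_i+\frm^d)=\ord_{E_j}(\frb_i)=\beta_{i,j}$; more precisely, it suffices that $d\cdot\ord_{E_j}(\frm)>\beta_{i,j}$ for all $i$. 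Since there are finitely many $E_j$ on $W$, there is a single $d_0$ such that for $d\ge d_0$ the divisor $\pi^{-1}(V(\frb_i+\frm^d))$ agrees with $\pi^{-1}(V(\frb_i))$ on $W$ (after possibly a further fixed blow-up making $(Z,\prod_i(\frb_i+\frm^{d_0}))$ into a log resolution simultaneously — one takes a common log resolution of $\prod_i\frb_i$ and $\frm$, which dominates those for all $d\ge d_0$ by Example~\ref{example1} and the usual valuative criterion). On this common model, $\lct(Z,\frB_d)$ for $d\ge d_0$ is computed by the same expression $\min_j(\kappa_j+1)/\sum_i\beta_{i,j}^{(d)}q_i$ where $\beta_{i,j}^{(d)}=\beta_{i,j}$ as soon as $d\cdot\ord_{E_j}(\frm)>\beta_{i,j}$; for the finitely many remaining $j$ the coefficients $\beta_{i,j}^{(d)}$ are nondecreasing in $d$ and bounded above by $\beta_{i,j}$, so $\lct(Z,\frB_d)$ is a nondecreasing sequence bounded above by $\lct(Z,\frB)$, and the issue is only to show it converges to that value (not to something smaller).

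For the convergence, I would argue as follows. By Lemma~\ref{lem_red_zero_dimensional} applied to $\frB$ (combined with Lemma~\ref{lem_reduction} to reduce to the $\QQ$-ideal case if one prefers, then Example~\ref{example1} to pass to a single ideal), we may enlarge $\frB$ by a power of $\frm$ so that $\lct(Z,\frB)$ is achieved by a divisor $E$ with center equal to the closed point. Replacing $\frb_i$ by $\frb_i+\frm^e$ for a large fixed $e$ only shrinks $\frb_i$ near the closed point in a controlled way and does not change the relevant log canonical threshold; so without loss of generality there is a divisor $E$ over $Z$ with center the closed point and $\ord_E(\frB)=\kappa_E+1$ where I write $\kappa_E$ for the log discrepancy coefficient of $E$ (i.e. $E$ computes $\lct(Z,\frB)$, using $c=1$ after the normalization). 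Now since $E$ has center the closed point, $\ord_E(\frm)\ge 1$, so for $d$ large $\ord_E(\frb_i+\frm^d)=\ord_E(\frb_i)=\beta_{i,E}$ for every $i$; hence $\ord_E(\frB_d)=\ord_E(\frB)=\kappa_E+1$. This forces $\lct(Z,\frB_d)\le 1=\lct(Z,\frB)$, and combined with the monotone inequality $\lct(Z,\frB_d)\le\lct(Z,\frB_{d+1})\le\lct(Z,\frB)$ from Lemma~\ref{simple_lemma}, together with the fact (from the previous paragraph) that $\lct(Z,\frB_d)=\lct(Z,\frB)$ once $d\ge d_0$ on the common model, we conclude both that the limit equals $\lct(Z,\frB)$ and that equality holds for $d\gg0$ whenever such an $E$ exists — which is the "furthermore" clause.

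For the general case, when no divisor computing $\lct(Z,\frB)$ has center the closed point, I would drop the sharp "equals for $d\gg 0$" and just prove the limit statement. Here the cleanest route is: $\lct(Z,\frB_d)$ is a bounded nondecreasing sequence, so it has a limit $\lambda\le\lct(Z,\frB)=:\mu$; it remains to rule out $\lambda<\mu$. Suppose $\lambda<\mu$. For each $d$, using the explicit log-resolution formula as above, $\lct(Z,\frB_d)$ is computed by some divisor $E_{j(d)}$ on the common model $W$; since $W$ has only finitely many prime exceptional divisors, some $E_j$ occurs infinitely often, and for that $E_j$ we get $\lambda=(\kappa_j+1)/\lim_d\sum_i\beta_{i,j}^{(d)}q_i$. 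But $\beta_{i,j}^{(d)}\to\beta_{i,j}=\ord_{E_j}(\frb_i)$ monotonically as $d\to\infty$ (precisely because $E_j$ lives on the fixed model $W$, so $\ord_{E_j}(\frb_i+\frm^d)=\min(\ord_{E_j}(\frb_i),d\ord_{E_j}(\frm))\to\ord_{E_j}(\frb_i)$). Hence $\lambda=(\kappa_j+1)/\sum_i\beta_{i,j}q_i=(\kappa_j+1)/\ord_{E_j}(\frB)\ge\mu$ by (\ref{formula_lct}), a contradiction. The main obstacle I anticipate is the bookkeeping needed to guarantee that a single log resolution $W$ works simultaneously for all $\frB_d$ with $d\ge d_0$ — i.e. that the $\frB_d$ can be handled by blowing up one fixed ideal (a product of $\prod_i\frb_i$ and $\frm$) rather than a moving target — but this follows from the valuative description of log resolutions and the identity $\ord_E(\frb_i+\frm^d)=\min(\ord_E(\frb_i),d\ord_E(\frm))$, valid on any model, once one notes that $V(\frb_i+\frm^d)$ and $V(\frb_i)$ cut out the same subscheme away from the closed point and agree after enough blow-ups at and above the closed point.
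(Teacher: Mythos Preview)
Your argument for the ``furthermore'' clause is essentially correct once you fix the direction of the monotonicity: since $\frb_i\subseteq\frb_i+\frm^d$, Lemma~\ref{simple_lemma} gives $\lct(\frB)\le\lct(\frB_d)$, and since $\frb_i+\frm^{d+1}\subseteq\frb_i+\frm^d$, the sequence $\lct(\frB_d)$ is \emph{nonincreasing} and bounded \emph{below} by $\lct(\frB)$. With that correction, the argument that a divisor $E$ with center the closed point forces $\lct(\frB_d)\le(\kappa_E+1)/\ord_E(\frB)=\lct(\frB)$ for $d\gg0$ is fine.

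The general case, however, has a genuine gap. A single log resolution of $(Z,\prod_i\frb_i\cdot\frm)$ is \emph{not} a log resolution of $(Z,\frb_i+\frm^d)$ in general: the valuative identity $\ord_E(\frb_i+\frm^d)=\min(\ord_E(\frb_i),d\,\ord_E(\frm))$ holds for every divisor $E$, but it does not make $(\frb_i+\frm^d)\cdot\cO_W$ invertible. Concretely, take $Z=\Spec k\llbracket x,y\rrbracket$, $\frb=(x)$, $\frm=(x,y)$; on the blow-up $W$ of the origin both $\frb$ and $\frm$ become invertible, but in the chart $x=uE$, $y=E$ one has $(\frb+\frm^2)\cdot\cO_W=(uE,E^2)=E\cdot(u,E)$, which is not principal. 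Thus your pigeonhole argument in the last paragraph, which requires the divisors computing $\lct(\frB_d)$ to lie among finitely many candidates on a fixed $W$, does not get off the ground. The attempted reduction via Lemma~\ref{lem_red_zero_dimensional} also does not close the gap: that lemma produces $t\ge0$ and a divisor $E$ with center the closed point computing $\lct(\frm^t\cdot\frB)=\lct(\frB)$, but when $t>0$ the ratio $(\kappa_E+1)/\ord_E(\frB)$ is strictly larger than $\lct(\frB)$, so $E$ gives no useful upper bound on $\lct(\frB_d)$.

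What the paper does instead is prove the formula
\[
\lct(Z,\frB)=\inf_F\frac{\ord_F(K_{-/Z})+1}{\ord_F(\frB)},
\]
the infimum taken over divisors $F$ with center equal to the closed point (this is the key step, borrowed from \cite[Proposition~2.5]{dFM}). Granting this, for any $\epsilon>0$ one picks such an $F$ with ratio $<\lct(\frB)+\epsilon$; since $\ord_F(\frm)\ge 1$, for $d$ large $\ord_F(\frB_d)=\ord_F(\frB)$, hence $\lct(\frB_d)\le\lct(\frB)+\epsilon$. Combined with $\lct(\frB_d)\ge\lct(\frB)$ this gives the limit. The point is that one must work with \emph{all} divisors centered at the closed point, not just the finitely many living on a fixed model.
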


\begin{proof}
The proof follows verbatim the proof of \cite[Proposition~2.5]{dFM} (the hypothesis in 
\emph{loc. cit.} that the ambient scheme is nonsingular does not play any role). The key step
is to show that 
$$\lct(Z, \frB)=\inf_F\frac{\ord_F(K_{W/Z})+1}{\ord_F(\frB)},$$
where the infimum is over the divisors $F$ over $Z$ (lying on some $W$) having center equal to the closed point. We refer to \emph{loc. cit.} for details.
\end{proof}

\begin{remark}\label{rem_prop_limit}
With the notation in the proposition, if $\frn$ denotes the maximal ideal in $\cO_{X,x}$, then
$\frm=\frn\cdot\widehat{\cO_{X,x}}$, and $\cO_{X,x}/\frn^d\simeq\widehat{\cO_{X,x}}/\frm^d$
for every $d$. It follows that, after possibly replacing $X$ by an affine open 
neighborhood of $x$, every $\RR$-ideal $\frB_d$ in the proposition can be written as
$\widehat{\frA_d}$ for some $\RR$-ideal $\frA_d$ on $X$.
\end{remark}

\begin{lemma}\label{lem_kawakita}
Suppose that $X$ is klt.
If $\frn$ is the ideal defining $x\in X$, then $\lct(X,\frn)\leq n$.
\end{lemma}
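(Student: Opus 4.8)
The plan is to reduce to the smooth case by a standard argument, using that a klt singularity of dimension $n$ has log canonical threshold of its maximal ideal bounded by the ambient dimension when the variety is smooth, and then transferring this bound through a suitable morphism. More precisely, first I would pass to the completion: by Proposition~\ref{prop3_2}(i)--(ii) and Remark~\ref{rem3_2}, replacing $X$ by $Z = \Spec(\widehat{\cO_{X,x}})$ and $\frn$ by $\frm$, it suffices to prove $\lct(Z,\frm)\le n$ where $(Z,\frm)$ is a klt complete local ring of dimension $n$. Alternatively, one may simply work on $X$ itself near $x$ and use that $\dim(\cO_{X,x}) = n$.

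The key step is to produce a divisor $E$ over $X$ with center $x$ and with $\ord_E(K_{Y/X}) + 1 \le n\cdot \ord_E(\frn)$, which by formula (\ref{formula_lct}) forces $\lct_x(X,\frn)\le n$. Here the natural candidate is the divisor obtained by blowing up the point: if $X$ were smooth at $x$, the exceptional divisor $E$ of $\Bl_x X$ satisfies $\ord_E(\frn)=1$ and $\ord_E(K_{Y/X}) = n-1$, giving exactly $\lct_x(X,\frn)=n$. For singular klt $X$ the same blow-up gives $\ord_E(\frn)=1$ still (since $\frn$ generates the ideal of the exceptional divisor on the blow-up, at least after passing to a resolution dominating it), but now one must control $\ord_E(K_{Y/X})$. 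The point is that for the divisor $E$ obtained by blowing up a closed point on an $n$-dimensional variety one always has $\ord_E(K_{Y/X})\le n-1$: this can be seen by comparing with a smooth point via a general finite projection, or directly from the fact that the discrepancy of the blow-up of a point is bounded above by that of the smooth case (the relative canonical sheaf picks up at most the generic behavior of $\Omega$, which is ``smaller'' in the singular case). Combining, $\ord_E(K_{Y/X})+1 \le n = n\cdot\ord_E(\frn)$, so $E$ witnesses $\lct_x(X,\frn)\le n$.

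The main obstacle I expect is making the inequality $\ord_E(K_{Y/X})\le n-1$ precise in the singular $\QQ$-Gorenstein setting over a complete local ring, since $K_{Y/X}$ is defined here via the sheaves of special differentials of Appendix~A rather than the usual Kähler differentials, and the klt hypothesis enters through the requirement that such a divisor computes a threshold $\le n$ rather than $<0$. A clean way around this is: choose a general system of parameters $t_1,\dots,t_n\in\frn$ and consider the finite morphism $p\colon X\to \AAA^n$ (locally, or $\Spec k\llbracket t_1,\dots,t_n\rrbracket$ after completion) they define. By Lemma~\ref{simple_lemma} applied with $\frB = \cO_X$, since $(t_1,\dots,t_n)\cO_X\subseteq \frn$ we get $\lct_x(X,\frn)\le \lct_x(X,(t_1,\dots,t_n))$, and then one shows the latter is $\le n$ by relating it to $\lct_0(\AAA^n,\frm_0)=n$ through $p$ — using that $p$ is finite and that, since $X$ is klt hence in particular normal and $p$ is generically étale, the log canonical threshold cannot increase under such a pull-back of the maximal ideal. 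The verification of this last monotonicity, via a log resolution of $(X,\frn)$ dominating the pull-back of one for $(\AAA^n,\frm_0)$ together with the fact that $K_{Y/X}$ is effective (as $X$ is klt, indeed canonical discrepancies are $\ge -1$ but here we need $\ge 0$, which holds since $X$ has rational — in particular canonical in the Gorenstein case, and klt in general with $K_{Y/X}\ge$ something $>-1$ — singularities; more carefully one uses that $\ord_E(K_{Y/X})\ge \ord_E(K_{Y/\AAA^n}) - \ord_E(p^*K_{\AAA^n}) \ge 0$ since $p$ is between normal varieties and ramification contributes nonnegatively), is the technical heart of the argument.
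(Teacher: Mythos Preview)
Your proposal contains genuine gaps in both approaches.

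\textbf{Second approach (the one you end up relying on).} The inequality you extract from Lemma~\ref{simple_lemma} is backward. That lemma says that a \emph{smaller} ideal has a \emph{smaller} log canonical threshold, so $(t_1,\dots,t_n)\subseteq\frn$ gives $\lct_x(X,(t))\le\lct_x(X,\frn)$, not the reverse. Bounding $\lct_x(X,(t))$ from above therefore tells you nothing about $\lct_x(X,\frn)$. This is fatal as written. The idea can be salvaged if you take $(t_1,\dots,t_n)$ to be a \emph{minimal reduction} of $\frn$ (which exists once the residue field is infinite), so that $\overline{(t)}=\overline{\frn}$ and hence $\lct_x(X,(t))=\lct_x(X,\frn)$; but you never say this, and you explicitly invoke the false inequality instead. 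Your justification for $\lct_x(X,(t))\le n$ is also muddled: you appeal to ``$K_{Y/X}$ effective'', which is the condition for \emph{canonical} singularities, not klt, and the displayed inequality $\ord_E(K_{Y/X})\ge \ord_E(K_{Y/\AAA^n})-\ord_E(p^*K_{\AAA^n})\ge 0$ is neither clearly true nor what is needed. The correct mechanism is Riemann--Hurwitz ($K_X=p^*K_{\AAA^n}+R$ with $R\ge 0$) together with the comparison of log discrepancies under finite covers (as in \cite[Proposition~5.20]{KM}): lifting the exceptional divisor of $\Bl_0\AAA^n$ to $E$ over $X$ with ramification index $r$ gives $a_E(X)=rn-\ord_E(R)\le rn$ and $\ord_E((t))=r$, whence $\lct_x(X,(t))\le n$.

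\textbf{First approach.} The assertion that the exceptional divisor of $\Bl_xX$ has discrepancy $\le n-1$ is exactly the hard content of the lemma; your justifications (``smaller in the singular case'', ``via a general finite projection'') are heuristic and do not constitute a proof. Discrepancies over a klt point are only bounded below by $-1$ and can in principle be large; bounding the log discrepancy of a specific divisor by $n$ is precisely what requires work.

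\textbf{Comparison with the paper.} The paper's proof is entirely different from either of your outlines: after reducing to algebraically closed $k$ and to the index-one cover (so $K_X$ is Cartier), it follows Kawakita's argument. One chooses on a log resolution an exceptional component $F$ with $\cO(-E)\vert_F$ big and nef, uses Kawamata--Viehweg vanishing to see that $m\mapsto h^0(F,\cO(K_F-mE\vert_F))$ is a polynomial of degree $n-1$, hence nonzero for some $1\le s\le n$, and then a second vanishing argument produces a section showing $\ord_F(K_{Y/X})+1\le s\cdot\ord_F(\frn)\le n\cdot\ord_F(\frn)$. This cohomological route avoids finite covers and reductions altogether. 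A corrected version of your second approach (minimal reduction plus Riemann--Hurwitz) would give a genuinely different and arguably more elementary proof, but as written the proposal does not establish the lemma.
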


\begin{proof}
We apply
Lemma~\ref{alg_closed}, with $K=\overline{k}$, the algebraic closure of $k$.
We see that $\lct(X,\frn)=\lct_{\overline{x}}(\overline{X},\overline{\frn})$
for any point $\overline{x}\in\phi^{-1}(x)$.
Since in some neighborhood of $\overline{x}$,  $\overline{\frn}$ is equal to the ideal defining 
$\overline{x}$, we see that we may replace $X$ by $\overline{X}$. Therefore
we may assume that $k$ is algebraically closed. 

As pointed out by Kawakita, the bound now follows from the proof of
\cite[Theorem~2.2]{Kawakita}. For the sake of the reader, we briefly recall the argument.
After replacing $X$ by its index one cover corresponding to $\cO(K_X)$, we may assume that
$K_X$ is Cartier. 
 Let $f\colon Y\to X$
be a log resolution of $(X,\frn)$, and write $\frn\cdot\cO_Y=\cO_Y(-E)$, with $E=\sum_im_iE_i$.
We can choose $F=E_{i_0}$ such that $\cO(-E)\vert_F$ is big and nef. For every $m\geq 1$, 
we have an exact sequence
$$0\to\cO(K_{Y/X}-mE)\to\cO(K_{Y/X}-mE+F)\to\cO(K_F)\otimes\cO(-mE)\vert_F\to 0.$$
By the Kawamata-Viehweg Vanishing Theorem, it follows that 
$$P(m):=h^0(F,\cO(K_F)\otimes\cO(-mE)\vert_F)=\chi(F,\cO(K_F)\otimes\cO(-mE)\vert_F),$$
and this is a polynomial of degree $(n-1)$ since $((-E)^{n-1}\cdot F)>0$. It follows that
there is an integer $s$, with $1\leq s\leq n$ such that $P(s)\neq 0$. 

On the other hand,
another application of the Kawamata-Viehweg Vanishing Theorem implies that
$R^1f_*\cO(K_{Y/X}-sE)=0$, hence
 the sequence 
$$0\to f_*\cO(K_{Y/X}-sE)\to f_*\cO(K_{Y/X}-sE+F)\to f_*(\cO(K_F)\otimes\cO(-sE)\vert_F)
\to 0$$
is exact.
It follows that $f_*\cO(K_{Y/X}-sE)\neq f_*\cO(K_{Y/X}-sE+F)$, hence there is 
a rational function $\phi$ on $X$ such that $D:={\rm div}_Y(\phi)+K_{Y/X}-sE+F\geq 0$, and the coefficient of $F$ in $D$ is zero. Since $K_{Y/X}-sE+F$ is $f$-exceptional, it follows that 
$\phi\in\cO(X)$. Therefore $\ord_F(\phi)\geq 0$, and we conclude that 
$\ord_F(K_{Y/X})+1\leq s\cdot\ord_F(\frn)$, and hence $\lct(X,\frn)\leq s\leq n$.
\end{proof}

\begin{lemma}\label{lem_sum}
If $\fra$ and $\frb$ are ideals on $X$ such that their supports contain $x\in X$, then
$\lct_x(\fra+\frb)\leq\lct_x(\fra)+\lct_x(\frb)$. 
\end{lemma}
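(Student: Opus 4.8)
The plan is to argue valuatively. Since the pair $(X,(\fra+\frb)^{\lct_x(\fra+\frb)})$ is log canonical at $x$, for every divisor $E$ over $X$ with $x\in c_X(E)$ one has $\lct_x(\fra+\frb)\le\frac{\ord_E(K_{Y/X})+1}{\ord_E(\fra+\frb)}$. Hence it suffices to produce, for each $\epsilon>0$, a single divisor $E$ over $X$ with $x\in c_X(E)$ and
$$\ord_E(K_{Y/X})+1<\bigl(\lct_x(\fra)+\lct_x(\frb)+\epsilon\bigr)\cdot\ord_E(\fra+\frb).$$
By Lemma~\ref{alg_closed} we may assume $k=\overline k$. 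Write $c=\lct_x(\fra)$, $d=\lct_x(\frb)$, fix $\epsilon>0$, and choose divisors $E_1,E_2$ over $X$ with $x\in c_X(E_i)$ and $\ord_{E_1}(K_{Y/X})+1<(c+\epsilon)\ord_{E_1}(\fra)$, $\ord_{E_2}(K_{Y/X})+1<(d+\epsilon)\ord_{E_2}(\frb)$; these exist because $(X,\fra^{c+\epsilon})$ and $(X,\frb^{d+\epsilon})$ are not log canonical at $x$, and necessarily $\ord_{E_1}(\fra)>0$, $\ord_{E_2}(\frb)>0$ (as $X$ is log canonical, so $\ord_{E_i}(K_{Y/X})+1\ge 0$). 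Since $\ord_E(\fra+\frb)=\min(\ord_E\fra,\ord_E\frb)$ for every $E$, the divisor $E_1$ already satisfies the displayed inequality if $\ord_{E_1}(\fra)\le\ord_{E_1}(\frb)$, and $E_2$ does if $\ord_{E_2}(\frb)\le\ord_{E_2}(\fra)$; so we may assume the ``crossed'' case $\ord_{E_1}(\frb)<\ord_{E_1}(\fra)$ and $\ord_{E_2}(\fra)<\ord_{E_2}(\frb)$.

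In the crossed case I would construct an interpolating divisor. Set $\alpha_i=\ord_{E_i}(\fra)$, $\beta_i=\ord_{E_i}(\frb)$, $\kappa_i=\ord_{E_i}(K_{Y/X})+1$, so $\beta_1<\alpha_1$, $\alpha_2<\beta_2$, $\kappa_1<(c+\epsilon)\alpha_1$, $\kappa_2<(d+\epsilon)\beta_2$, and take the positive integers $(p,q)=(\beta_2-\alpha_2,\,\alpha_1-\beta_1)$. On a model $Y\to X$ on which $E_1$ and $E_2$ lie in a common simple normal crossings chart meeting transversally along a codimension-two center whose image in $X$ contains $x$ (the delicate point; see below), the weighted blow-up of $Y$ with weights $(p,q)$ produces a divisor $E$ with $x\in c_X(E)$, $\ord_E(K_{Y/X})+1=p\kappa_1+q\kappa_2$, $\ord_E(\fra)=p\alpha_1+q\alpha_2$ and $\ord_E(\frb)=p\beta_1+q\beta_2$. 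The weights are chosen so that $\ord_E(\fra)=\ord_E(\frb)$, hence $\ord_E(\fra+\frb)=p\alpha_1+q\alpha_2=\alpha_1\beta_2-\alpha_2\beta_1>0$. Now $\ord_E(K_{Y/X})+1<(c+\epsilon)p\alpha_1+(d+\epsilon)q\beta_2$, and the algebraic identity
$$c\,p\alpha_1+d\,q\beta_2=(c+d)(\alpha_1\beta_2-\alpha_2\beta_1)+c\,\alpha_2(\beta_1-\alpha_1)+d\,\beta_1(\alpha_2-\beta_2),$$
together with $\beta_1<\alpha_1$, $\alpha_2<\beta_2$ and $c,d,\alpha_2,\beta_1\ge 0$, gives $c\,p\alpha_1+d\,q\beta_2\le(c+d)\,\ord_E(\fra+\frb)$; combined with the elementary bound $p\alpha_1+q\beta_2\le 2(p\alpha_1+q\alpha_2)$ this yields $\ord_E(K_{Y/X})+1<(c+d+2\epsilon)\,\ord_E(\fra+\frb)$. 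Letting $\epsilon\to 0$ gives $\lct_x(\fra+\frb)\le c+d$, as desired.

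The step that is not purely formal is producing the model $Y$: two divisorial valuations with a common center need not become monomial in a single coordinate system on any one birational model, so the weighted blow-up above is not literally available in general; I expect the construction of this interpolating valuation to be the main obstacle. The fix is to pass first to $Z=\Spec\widehat{\cO_{X,x}}$ via Proposition~\ref{prop3_2}, so that $\lct_x(X,\fra)=\lct(Z,\fra\cO_Z)$ and similarly for $\frb$ and for $\fra+\frb$, and where every divisor over $Z$ has center the closed point; there $\ord_{E_1}$ and $\ord_{E_2}$ are joined by a segment of quasi-monomial valuations, each with $x$ in its center, along which the functions $\ord(K_{W/Z})+1$, $\ord(\fra\cO_Z)$ and $\ord(\frb\cO_Z)$ are piecewise linear. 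Running the estimate above along this segment---taking the quasi-monomial valuation at which the orders of $\fra\cO_Z$ and $\frb\cO_Z$ agree, and then approximating it by divisorial valuations---completes the proof. Alternatively, the inequality follows formally from the summation theorem $\mathcal{J}\bigl((\fra+\frb)^t\bigr)=\sum_{\lambda+\mu=t}\mathcal{J}(\fra^\lambda\frb^\mu)$ for multiplier ideals, whose proof localizes on a log resolution and hence carries over to the present setting; in either approach the real content is the convexity encapsulated in the displayed identity.
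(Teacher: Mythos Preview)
Your ``alternative'' at the end---deducing the inequality from the Summation Theorem for multiplier ideals---is exactly the paper's proof: after reducing to algebraically closed $k$ via Lemma~\ref{alg_closed}, one invokes
\[
\mathcal{J}\bigl(X,(\fra+\frb)^{\lambda}\bigr)=\sum_{\alpha+\beta=\lambda}\mathcal{J}(X,\fra^{\alpha}\frb^{\beta})
\]
(\cite[Corollary~2]{JM}) and reads off the inequality from the characterization of $\lct_x$ as the jumping number at which $x$ enters the support of the multiplier ideal. That argument is short and complete.

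Your primary valuative approach, by contrast, has a real gap that you correctly flag but do not actually close. The weighted blow-up step requires $E_1$ and $E_2$ to lie in a common simple normal crossings chart \emph{on a log resolution that also principalizes $\fra$ and $\frb$} (otherwise the formulas $\ord_E(\fra)=p\alpha_1+q\alpha_2$, $\ord_E(\frb)=p\beta_1+q\beta_2$ fail). There is no reason such a chart exists: on any log resolution $Y$ of $(X,\fra\frb)$ on which both $E_1$ and $E_2$ appear, they may simply be disjoint. Passing to the completion $Z=\Spec\widehat{\cO_{X,x}}$ does not help---the same obstruction persists there. Your phrase ``$\ord_{E_1}$ and $\ord_{E_2}$ are joined by a segment of quasi-monomial valuations'' is not justified: the quasi-monomial valuations adapted to a fixed log resolution form the dual complex of the exceptional divisor, and two vertices $E_1,E_2$ are joined by an edge only when $E_1\cap E_2\neq\emptyset$. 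One can connect them by a \emph{path} in the dual complex, but then the log discrepancy and the ideal orders are only piecewise linear along that path, and your two-variable convexity identity no longer applies directly; one would have to run an induction along the edges, controlling the intermediate vertices, which is substantially more work than you indicate.

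So: the algebra in your interpolation is correct when the construction is available, and the alternative you mention is both correct and is the paper's own argument; but the main line of your proposal does not go through as written.
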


\begin{proof}
Arguing as in the proof of the previous lemma, we may assume that $k$ is algebraically closed.
It is now convenient to use the language of multiplier ideals, for which we refer to 
\cite[Chapter~9]{positivity}. The version of the Summation
Theorem from \cite[Corollary~2]{JM} implies that for every $\lambda\geq 0$ we have the following description
for the multiplier ideals of exponent $\lambda$ of a sum of ideals:
\begin{equation}\label{summation}
{\mathcal J}(X,(\fra+\frb)^{\lambda})=\sum_{\alpha+\beta=\lambda}{\mathcal J}(X, \fra^{\alpha}
\frb^{\beta}).
\end{equation}
Recall that $\lct_x(\fra)$ is the smallest $\alpha$ such that $x$ lies in the support of ${\mathcal J}(X, \fra^{\alpha})$. Let $c_1=\lct_x(\fra)$ and $c_2=\lct_x(\frb)$. It is enough to show that
$x$ lies in the support of ${\mathcal J}(X, (\fra+\frb)^{c_1+c_2})$. This follows from (\ref{summation}),
since given $\alpha$, $\beta$ such that $\alpha+\beta=c_1+c_2$, then either
$\alpha\geq c_1$, or $\beta\geq c_2$. In the first case we have
$${\mathcal J}(X, \fra^{\alpha}\frb^{\beta})\subseteq {\mathcal J}(X, \fra^{\alpha})
\subseteq {\mathcal J}(X, \fra^{c_1}),$$
whose support contains $x$. The case $\beta\geq c_2$ is similar. 
\end{proof}

\begin{proposition}\label{prop3_9}
Suppose that $X$ is klt in a neighborhood of $x$.
Let $\frB=\prod_{i=1}^r\frb_i^{q_i}$ be a 
proper $\RR$-ideal on $Z$, and let $\frm$ be the ideal
defining the closed point of $Z$.
\begin{enumerate}
\item[i)] If $\frb_i\subseteq\frm^{s_i}$ for every $i$, then
$\lct(Z,\frB)\leq \frac{n}{s_1q_1+\cdots+s_rq_r}$.
\item[ii)] Suppose that $\frA=\prod_{i=1}^r\fra_i^{q_i}$ is another $\RR$-ideal on $Z$,
and let $\epsilon>0$ be a real number. If $d$ is a positive integer such that 
$d\geq \frac{n}{\epsilon q_i}$ and $\fra_i+\frm^d=\frb_i+\frm^d$ for all $i$, then
$$|\lct(\frA)-\lct(\frB)|\leq\epsilon.$$
\end{enumerate}
\end{proposition}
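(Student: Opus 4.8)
I would prove the two parts separately; part~i) is a quick consequence of monotonicity together with the bound $\lct(Z,\frm)\le n$, while part~ii) is the substantive one and will be reduced to a single inequality about multiplier ideals.

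\emph{Part~i).} By Lemma~\ref{simple_lemma}, the inclusions $\frb_i\subseteq\frm^{s_i}$ give
\[
\lct(Z,\frB)\ \le\ \lct\Bigl(Z,\ \textstyle\prod_{i=1}^r(\frm^{s_i})^{q_i}\Bigr)\ =\ \lct(Z,\frm^{s})\ =\ \tfrac1s\,\lct(Z,\frm),\qquad s:=\textstyle\sum_{i=1}^r s_iq_i,
\]
the middle equality because $\prod_i(\frm^{s_i})^{q_i}$ and $\frm^{s}$ have the same order along every divisorial valuation. Now apply Proposition~\ref{prop3_2}~ii) to the ideal $\frn$ defining $x$ (so that $\widehat{\frn}=\frm$), together with Lemma~\ref{lem_kawakita}, to get $\lct(Z,\frm)=\lct_x(X,\frn)\le n$; here one uses that $X$ is klt near $x$. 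The bound follows (the case $s=0$ being vacuous).

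\emph{Part~ii).} Put $\frB_d=\prod_{i=1}^r(\frb_i+\frm^d)^{q_i}$ and $\frA_d=\prod_{i=1}^r(\fra_i+\frm^d)^{q_i}$. The hypothesis $\fra_i+\frm^d=\frb_i+\frm^d$ means $\frA_d=\frB_d$, and since $\frb_i\subseteq\frb_i+\frm^d$ and $\fra_i\subseteq\fra_i+\frm^d$, Lemma~\ref{simple_lemma} gives $\lct(\frB)\le\lct(\frB_d)$ and $\lct(\frA)\le\lct(\frA_d)=\lct(\frB_d)$. Hence it suffices to prove $\lct(Z,\frB_d)\le\lct(Z,\frB)+\epsilon$, together with its analogue with $\frA$ in place of $\frB$ (which has an identical proof): granting these, $\lct(\frA)\le\lct(\frB_d)\le\lct(\frB)+\epsilon$ and, symmetrically, $\lct(\frB)\le\lct(\frB_d)=\lct(\frA_d)\le\lct(\frA)+\epsilon$, which is the claim. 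To prove $\lct(Z,\frB_d)\le\lct(Z,\frB)+\epsilon$ I would pass to multiplier ideals on $Z$, defined via a log resolution (available by Theorems~\ref{thm1_temkin}--\ref{thm2_temkin}, with the usual vanishing-based formalism, cf.\ the proof of Lemma~\ref{lem_kawakita}); note that for a proper $\RR$-ideal $\frC$ on $Z$ one has $\lct(Z,\frC)=\lct_x(Z,\frC)$, since every divisor over $Z$ has center containing the closed point $x$, and that this equals the least $t\ge0$ with $x\in V(\mId{Z,\frC^t})$. By the Summation Theorem (applied $r$ times, to the sums $\frb_i+\frm^d$; cf.\ \cite[Corollary~2]{JM}), for every $\lambda\ge0$ one has
\[
\mId{Z,\frB_d^{\lambda}}\ \subseteq\ \sum\ \mId{Z,\ \textstyle\prod_{i=1}^r\frb_i^{\alpha_i}\,\frm^{d\beta_i}},
\]
the sum being over all $\alpha_i,\beta_i\ge0$ with $\alpha_i+\beta_i=\lambda q_i$ for each $i$. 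Since the zero locus of a sum of ideals is the intersection of the individual zero loci, to conclude $\lct(Z,\frB_d)\le\lambda$ it is enough to check that $x$ lies in $V(\mId{Z,\prod_i\frb_i^{\alpha_i}\frm^{d\beta_i}})$ for each such decomposition, and I would do this for every $\lambda>\lct(Z,\frB)+\epsilon$ and then let $\lambda$ decrease to $\lct(Z,\frB)+\epsilon$. The case split is the following. If $\alpha_i\ge\lct(Z,\frB)\,q_i$ for all $i$, then by monotonicity of multiplier ideals $\mId{Z,\prod_i\frb_i^{\alpha_i}\frm^{d\beta_i}}\subseteq\mId{Z,\frB^{\lct(Z,\frB)}}$, and $x$ lies in the zero locus of the latter by the jumping-number characterization of $\lct(Z,\frB)$. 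Otherwise $\alpha_{i_0}<\lct(Z,\frB)\,q_{i_0}$ for some $i_0$, so
\[
d\beta_{i_0}\ =\ d\bigl(\lambda q_{i_0}-\alpha_{i_0}\bigr)\ >\ d\,\epsilon\,q_{i_0}\ \ge\ n\ \ge\ \lct(Z,\frm)
\]
by the hypothesis $d\ge n/(\epsilon q_{i_0})$ and by part~i), whence $x\in V(\mId{Z,\frm^{d\beta_{i_0}}})$, an ideal containing the multiplier ideal in question.

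The step I expect to require the most care is setting up the multiplier ideal formalism — and in particular the Summation Theorem of \cite{JM} — directly on $Z=\Spec(\widehat{\cO_{X,x}})$ rather than on a scheme of finite type over a field: the ideals $\frb_i$ need not descend from $X$, so one cannot reduce to the finite type level by pulling back, as was done in Lemma~\ref{lem_sum}. I do not expect a real obstruction, since multiplier ideals are computed on a log resolution (which exists by Temkin's theorems) and the proof of the Summation Theorem is resolution/valuation theoretic, using only tools — log resolutions and Kawamata--Viehweg vanishing — already available in our setting, so it should carry over essentially verbatim. The remaining points are routine: that $\frB_d=\frA_d$ and $\frA$ are again proper $\RR$-ideals (from $\frB$ being proper together with the congruences modulo $\frm^d$), that $d\ge1$ (forced by the hypothesis on $d$), and the basic monotonicity and jumping-number properties of multiplier ideals on $Z$.
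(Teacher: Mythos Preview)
Your proof of part~i) matches the paper's exactly.

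For part~ii) you take a genuinely different route. The paper first proves the single-ideal estimate $|\lct(\fra)-\lct(\frb)|\le n/\ell$ for usual ideals on $Z$ with $\fra+\frm^\ell=\frb+\frm^\ell$, and does so by using Proposition~\ref{prop_limit} to replace $\fra,\frb$ by $\fra+\frm^N,\frb+\frm^N$ (which descend to $X$) and then applying Lemma~\ref{lem_sum} on $X$; it then reduces the $\RR$-ideal case to this by approximating the $q_i$ by rationals (Lemma~\ref{lem_reduction}), clearing denominators, and using the integral-closure identity $\overline{(\frb_i+\frm^d)^{pq_i}}=\overline{\frb_i^{pq_i}+\frm^{dpq_i}}$. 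Your argument is more direct and avoids the $\QQ$-approximation and integral-closure manipulations: you stay with the $\RR$-ideal $\frB_d$ and use the multi-factor Summation Theorem together with a clean case split on whether each $\alpha_i\ge\lct(\frB)q_i$.

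The concern is the step you yourself flag. Your assertion that Kawamata--Viehweg vanishing is ``already available in our setting'' is not supported by the paper: the only place the paper invokes vanishing (the proof of Lemma~\ref{lem_kawakita}) is on $X$, a scheme of finite type over an algebraically closed field, and the whole design of Section~2 is to reduce statements on $Z$ back to $X$ precisely so as not to need a multiplier-ideal/vanishing formalism over $R=k\llbracket x_1,\ldots,x_N\rrbracket$. Setting up local vanishing and the Summation Theorem on schemes of finite type over $R$ is plausible but is a genuine foundational extension, not something that carries over ``essentially verbatim''. If you want to preserve your argument's structure while staying within what the paper establishes, insert the same descent step the paper uses: replace each $\frb_i$ by $\frb_i+\frm^N$ for $N\gg d$ (via Proposition~\ref{prop_limit} and Remark~\ref{rem_prop_limit} these descend to $X$), run your multiplier-ideal/Summation argument on $X$ where \cite{JM} applies, and then pass to the limit.
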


\begin{proof}
For i), it is clear that if $s=s_1q_1+\cdots+s_rq_r$, then
$$\lct(Z,\frB)\leq \lct(Z,\frm^{s})=
\frac{\lct(Z,\frm)}{s}.$$
By Proposition~\ref{prop3_2} we have $\lct(Z,\frm)=\lct(X,\frn)$, where
$\frn$ is the ideal defining $x\in X$, and we conclude by Lemma~\ref{lem_kawakita}.

For ii), we first show that if $\fra$ and $\frb$ are proper ideals on $Z$ such that 
$\fra+\frm^{\ell}=\frb+\frm^{\ell}$, then $|\lct(\fra)-\lct(\frb)|\leq n/\ell$. Note that by 
Proposition~\ref{prop_limit}, it is enough to show that
$|\lct(\fra+\frm^N)-\lct(\frb+\frm^N)|\leq n/\ell$
for all $N\geq \ell$. Therefore we may assume that there are ideals $\fra'$ and $\frb'$
on $X$ such that $\fra=\widehat{\fra'}$ and $\frb=\widehat{\frb'}$. By Proposition~\ref{prop3_2},
it is enough to show that if $\fra'+\frn^{\ell}=\frb'+\frn^{\ell}$, then
$|\lct_x(\fra')-\lct_x(\frb')|\leq n/\ell$. We deduce from Lemmas~\ref{lem_kawakita}
and \ref{lem_sum} that
$$\lct_x(\fra')\leq\lct_x(\fra'+\frn^{\ell})=\lct_x(\frb'+\frn^{\ell})\leq\lct_x(\frb')+\frac{\lct(\frn)}{\ell}
\leq\lct_x(\frb')+\frac{n}{\ell}.$$
The inequality $\lct_x(\frb')\leq\lct_x(\fra')+\frac{n}{\ell}$ follows by symmetry.

We now prove ii). 
After writing each $q_i$ as a decreasing limit of rational numbers, 
we see using Lemma~\ref{lem_reduction}  that it is enough to prove ii)
when all $q_i\in\QQ$. Let us choose a positive integer $p$ such that all $pq_i$
are integers.

It is enough to show that 
$\lct(\frB)\geq\lct(\frA)-\epsilon$, as the other inequality will follow by symmetry. After replacing
each $\fra_i$ by $\fra_i+\frm^d$, we may assume that $\fra_i=\frb_i+\frm^d$. 
Therefore $\fra_i^{pq_i}=(\frb_i+\frm^d)^{pq_i}$, and this ideal has the same integral closure as 
$\frb_i^{pq_i}+\frm^{dpq_i}$. Since $\prod_i(\frb_i^{pq_i}+\frm^{dpq_i})$
and $\prod_i\frb_i^{pq_i}$ have the same image mod $\frm^{\ell}$, where 
$\ell=dp\cdot\min_iq_i$, we deduce
$$\lct(\frA)=p\cdot\lct\big({\textstyle\prod}_i\fra_i^{pq_i}\big)
={p}\cdot \lct\big({\textstyle\prod}_i(\frb_i^{pq_i}+\frm^{dpq_i})\big)$$
$$\leq {p}\cdot \big(\lct\big({\textstyle\prod}_i\frb_i^{pq_i}\big)+\frac{n}{\ell}\big)
\leq\lct(\frB)+\epsilon,$$
where the last inequality follows from the assumption that $\ell\geq np/\epsilon$.
\end{proof}

The following result is a key ingredient in the proof of our main result. In the case
of schemes of finite type over a field and usual ideals, it was proved in 
\cite{Kol1} and \cite{dFEM}. 

\begin{proposition}\label{semicont}
Consider the proper
$\RR$-ideals $\frA=\prod_{i=1}^r\fra_i^{q_i}$ and 
$\frB=\prod_{i=1}^r\frb_i^{q_i}$ on $Z$, and suppose that $E$ is a divisor that
computes $\lct(\frA)$, having center equal to the closed point of $Z$. 
If $\fra_i+\frp_i=\frb_i+\frp_i$ for all $i$, where $\frp_i=\{u\in \widehat{\cO_{X,x}}\mid \ord_E(u)>\ord_E(\fra_i)\}$, 
then 
$\lct(\frA)=\lct(\frB)$.
\end{proposition}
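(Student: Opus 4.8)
The plan is to reduce the statement to the case of usual ideals on a scheme of finite type over $k$, where it is due to \cite{Kol1} (and, for smooth ambient spaces, \cite{dFEM}).

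First, the inequality $\lct(\frB)\le\lct(\frA)$ is immediate: since every section of $\frb_i$ is congruent modulo $\frp_i$ to a section of $\fra_i$, and sections of $\frp_i$ have strictly larger $\ord_E$, the hypothesis forces $\ord_E(\frb_i)=\ord_E(\fra_i)$ for all $i$, hence $\ord_E(\frB)=\ord_E(\frA)$; since $\lct(\frB)\le(\ord_E(K_{W/Z})+1)/\ord_E(\frB)$ for the divisor $E$ and $E$ computes $\lct(\frA)$, we conclude $\lct(\frB)\le\lct(\frA)$. It remains to prove $\lct(\frB)\ge\lct(\frA)$, and for this we may reduce to usual ideals. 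Applying Lemma~\ref{lem_reduction2} to $\frA$ and $E$ (with $\lambda=\lct(\frA)$) produces rational numbers $p_{i,m}\to\lambda q_i$ for which $E$ computes $\lct(\prod_i\fra_i^{p_{i,m}})=1$; since the $\frp_i$ depend only on $\ord_E(\fra_i)$, the hypothesis $\fra_i+\frp_i=\frb_i+\frp_i$ is untouched by this change of exponents. If the proposition is known for $\prod_i\fra_i^{p_{i,m}}$ and $\prod_i\frb_i^{p_{i,m}}$, then $\lct(\prod_i\frb_i^{p_{i,m}})=1$, and Lemma~\ref{lem_reduction} gives $\lct(\prod_i\frb_i^{\lambda q_i})=1$, i.e.\ $\lct(\frB)=\lambda$. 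Clearing denominators as in Example~\ref{example1} reduces the $\QQ$-ideal case to the case of two usual ideals $\fra,\frb$; here a routine valuation computation shows that $\fra_i+\frp_i=\frb_i+\frp_i$ for all $i$ implies $\fra+\frq=\frb+\frq$ for $\frq=\{u\in\widehat{\cO_{X,x}}\mid\ord_E(u)>\ord_E(\fra)\}$, and clearing denominators does not change the divisor computing the log canonical threshold. So it suffices to treat ideals $\fra,\frb$ on $Z$ with $E$ computing $\lct(\fra)$, with center the closed point, and with $\fra+\frq=\frb+\frq$.

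Now I would pass from $Z$ to a scheme of finite type over $k$. Because $\ord_E(\frm^d)>\ord_E(\fra)$ for $d\gg0$, we have $(\fra+\frm^d)+\frq=(\frb+\frm^d)+\frq$, while $\ord_E(\fra+\frm^d)=\ord_E(\fra)$, so $E$ still computes $\lct(Z,\fra+\frm^d)=\lct(Z,\fra)$ for $d\gg0$ by Proposition~\ref{prop_limit}. By Remark~\ref{rem_prop_limit} the ideals $\fra+\frm^d$ and $\frb+\frm^d$ descend to ideals $\fra_d,\frb_d$ on $X$ (after shrinking $X$ around $x$), with $\lct_x(X,\fra_d)=\lct(Z,\fra+\frm^d)$; by part~iii) of Proposition~\ref{prop3_2} together with Remark~\ref{divisor_over_point}, the divisor $E$ — which has center the closed point — corresponds to a divisor $F$ over $X$ with center $x$ that computes $\lct_x(X,\fra_d)$, and one checks that $\frq=\frq_F\,\widehat{\cO_{X,x}}$ for $\frq_F=\{u\in\cO_{X,x}\mid\ord_F(u)>\ord_F(\fra_d)\}$ (both ideals contain $\frm^d$ and have the same image modulo $\frm^d$), so that $\fra_d+\frq_F=\frb_d+\frq_F$ by faithful flatness of completion. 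After base change to $\overline{k}$ via Lemma~\ref{alg_closed}, the finite-type statement of \cite{Kol1,dFEM} applies and gives $\lct_x(X,\fra_d)=\lct_x(X,\frb_d)$, i.e.\ $\lct(Z,\frb+\frm^d)=\lct(Z,\fra)$ for all $d\gg0$; letting $d\to\infty$ in Proposition~\ref{prop_limit} yields $\lct(Z,\frb)=\lct(Z,\fra)$, as wanted.

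The essential point — and the main obstacle — is exactly this passage to finite type: a priori the divisor $E$ computing $\lct(\frA)$ lives only over the completion $Z$, and the valuation $\ord_E$ need not be divisorial on $X$. Truncating modulo $\frm^d$ circumvents this by forcing the relevant divisor to have center the closed point, which is precisely the situation in which completion is harmless (this is why Proposition~\ref{prop3_2} and Remark~\ref{divisor_over_point} are arranged as they are); the remaining verifications — compatibility of the valuation ideals $\frq$ under completion, and that each reduction preserves the hypothesis — are routine but require some care. Alternatively, one could bypass the reduction to finite type and run Koll\'ar's semicontinuity argument directly over a finite-dimensional parameter space of ideals on $Z$ agreeing with $\fra$ modulo $\frq$, after truncating modulo a high power of $\frm$; the bookkeeping is essentially the same.
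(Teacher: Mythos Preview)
Your argument follows the same three-stage structure as the paper's proof: reduce $\RR$-ideals to $\QQ$-ideals via Lemma~\ref{lem_reduction2} and Lemma~\ref{lem_reduction}, clear denominators to reduce to usual ideals (your ``routine valuation computation'' is spelled out in the paper as an explicit Claim, equation~(\ref{eq_prop_m_adic})), then truncate by $\frm^d$ to descend to finite type over $k$ and invoke \cite{Kol1,dFEM} after base change to $\overline{k}$.

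One citation needs correcting. To pass from the divisor $E$ over $Z$ to a divisor $F$ over $X$, Proposition~\ref{prop3_2}~iii) and Remark~\ref{divisor_over_point} are not sufficient: those results start from a divisor $F$ on a model of $X$ and describe the components of its pullback to the corresponding model of $Z$, not the reverse. What you actually need is Lemma~\ref{divisor}, which has independent content (a sequence-of-blowups argument) and shows that whenever $E$ has center the closed point, the restriction of $\ord_E$ to $k(X)$ equals $\ord_F$ for some divisor $F$ over $X$ with center $x$, and moreover $E$ arises as $h^*(F)$ in a suitable Cartesian diagram. Once you have this, Proposition~\ref{prop3_2}~iii) then applies as you say. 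Your closing remark that ``$\ord_E$ need not be divisorial on $X$'' is accordingly a bit misleading: Lemma~\ref{divisor} says it always is, given that the center is the closed point, so truncation is needed only to make the \emph{ideals} descend, not the divisor.
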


We will need the following lemma.

\begin{lemma}\label{divisor}
If $E$ is a divisor over $Z=\Spec(\widehat{\cO_{X,x}})$ with center equal to the closed point of $Z$,
then the restriction of $\ord_E$ to the function field of $X$ is of the form $\ord_F$ for some divisor 
$F$ over $X$ with center $x$. Moreover,
one can find a Cartesian diagram as in \eqref{diag2}
such that $F$ appears as a prime nonsingular divisor on $Y$ and $E$ appears as $h^*(F)$.
\end{lemma}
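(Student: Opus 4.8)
The plan is to first identify $v:=\ord_E|_{K(X)}$ as a divisorial valuation of $K(X)$ with center $x$, and then to realize it on a nonsingular model over $X$ in such a way that the base change to $Z$ recovers $E$. I expect the first step --- and within it, the computation of the residue field of $v$ --- to be the main technical point; the rest is an exercise in base-change geometry resting on the observation that the completion morphism $g\colon Z\to X$ collapses the fiber over $x$ to $\Spec k(x)$.

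For the first step, by the Remark following Theorem~\ref{thm1_temkin} we may assume that $E$ is a prime divisor on a nonsingular integral scheme $W_0$ with $f_0\colon W_0\to Z$ proper and birational. Since $E$ has center the closed point $z$ of $Z$, it is contained in the fiber $f_0^{-1}(z)$, which is of finite type over $\kappa(z)=k(x)$; as $W_0$ has dimension $n=\dim\cO_{X,x}$ and $E$ is a prime divisor, $E$ is integral of dimension $n-1$, so $\operatorname{trdeg}(k(E)/k(x))=n-1$. Put $V=\cO_{W_0,\xi_E}$, a DVR of $K(Z)$ dominating $\widehat{\cO_{X,x}}$, and $V_0=V\cap K(X)$; since the value group of $V_0$ is a nonzero subgroup of $\ZZ$ and $V_0$ dominates $\cO_{X,x}$, $V_0$ is a DVR realizing $v$. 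The key claim is that the residue field of $V_0$ equals $k(E)$. Indeed, let $\theta\in V$ with $\ord_E(\theta)=0$ (if $\ord_E(\theta)>0$ its residue is $0$), and write $\theta=a/b$ with $a,b\in\widehat{\cO_{X,x}}$, so $\ord_E(a)=\ord_E(b)$; choosing $a',b'\in\cO_{X,x}$ with $a\equiv a'$, $b\equiv b'\pmod{\frm^N}$ for $N\gg0$ --- possible since $\cO_{X,x}/\frn^N\cong\widehat{\cO_{X,x}}/\frm^N$ --- and using $\ord_E(\frm)\geq1$, one gets $\ord_E(\theta-a'/b')>0$ while $a'/b'\in V\cap K(X)=V_0$, so the residue of $\theta$ lies in the image of $V_0$. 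Hence the residue field of $v$ is $k(E)$, of transcendence degree $n-1$ over $k$, and by the standard characterization of divisorial valuations (a $\ZZ$-valued valuation centered at $x$ whose residue field has transcendence degree $\dim\cO_{X,x}-1$ over $k(x)$ is divisorial; see \cite{Kol2}) we conclude $v=\ord_F$ for a divisor $F$ over $X$ with center $x$.

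For the second step, by resolution of singularities applied to a normal model realizing $v$, choose a proper birational $\pi\colon Y\to X$ with $Y$ nonsingular on which $v$ is computed by a nonsingular prime divisor $F$; since $c_X(F)=x$, $F$ maps set-theoretically into $\{x\}$. Form the Cartesian square \eqref{diag2} with this $Y$; then $h\colon W\to Y$ is regular and $W$ is nonsingular by Remark~\ref{rmk:diagram}. Because the fiber of $g\colon Z\to X$ over $x$ is $\Spec k(x)$, we get $h^{-1}(F)=F\times_X Z\cong F$, which is therefore a prime divisor $\widehat F$ on $W$ with $h^*(F)=\widehat F$. Finally, let $\zeta$ be the center of $\ord_E$ on $W$ (which exists since $W$ is proper over $Z$ and the valuation ring of $\ord_E$ contains $\widehat{\cO_{X,x}}$). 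Its image in $Y$ is the center of $v=\ord_E|_{K(X)}$ on $Y$, namely the generic point $\xi_F$ of $F$; and the fiber of $h$ over $\xi_F$ is again a single point --- the generic point $\xi_{\widehat F}$ of $\widehat F$, since $\xi_F$ too lies over $x$. Hence $\zeta=\xi_{\widehat F}$, so the DVR $\cO_{W,\xi_{\widehat F}}$ is contained in the valuation ring of $\ord_E$; since a DVR admits no proper intermediate ring between itself and its fraction field, the two coincide, i.e.\ $\ord_E=\ord_{\widehat F}=\ord_{h^*(F)}$. Thus $E$ appears as $h^*(F)$, which completes the proof.
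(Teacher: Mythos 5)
Your proof is correct and takes a genuinely different route from the paper's. Both arguments open the same way, using the Dimension Formula to compute that the residue field of $\ord_E$ has transcendence degree $n-1$ over $k(x)$. From there, the paper adapts the proof of \cite[Lemma~2.45]{KM} to schemes over $R$: it builds simultaneous towers of blow-ups of $Z$ and of $X$ at the successive centers of $\ord_E$ and of its restriction $w$, showing that those centers eventually become isomorphic and of codimension one, which establishes divisoriality of $w$ and the compatibility of models in one stroke. You instead transfer the residue-field computation directly to $X$: setting $V_0=\cO_v\cap K(X)$ and using the $\frm$-adic density of $\cO_{X,x}$ in $\widehat{\cO_{X,x}}$ together with $\ord_E(\frm)\geq 1$, you show $k(V_0)=k(E)$, and then invoke the standard divisoriality criterion for finite-type $k$-schemes --- which is precisely the statement the paper re-derives by the blow-up tower, but in the finite-type setting where it needs no adaptation. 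For the ``moreover'' part you give a short base-change argument hinging on the observation that $g^{-1}(x)=\Spec k(x)$: this forces $h^*(F)\cong F$ to be a prime divisor on $W$, forces its generic point to coincide with the center of $\ord_E$ on $W$, and then maximality of a DVR among proper subrings of its fraction field yields $\ord_E=\ord_{h^*(F)}$. Your route is more modular, cleanly separating the divisoriality claim (pushed down to the finite-type case where the criterion is classical) from the base-change compatibility; the paper's simultaneous construction is more self-contained and keeps the geometry of the two towers explicitly parallel. One small presentational point: the divisoriality criterion you cite is closer to \cite[Lemma~2.45]{KM} than to anything in \cite{Kol2}, so that reference would be the better one to give.
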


\begin{proof}
We first note that the valuation ring $\cO_v$ of $v=\ord_E$ is essentially of finite type over
$\widehat{\cO_{X,x}}$, and its residue field $k(E)$ has transcendence degree $(n-1)$
over $k$ (recall that $n=\dim(\cO_{X,x})=\dim(\widehat{\cO_{X,x}})$). Indeed, let us realize $E$ as a prime divisor on some nonsingular $T$, with  $\phi\colon T\to Z$ birational. By assumption,
$E$ lies in the fiber $T_0$ of $T$ over the closed point  in $Z$. Note that $T_0$ is a scheme of finite type over $K$, where $K=k(x)$, hence over $k$ (recall that $K$ is finite over $k$). Let $y\in E$ be a closed point. It follows from the Dimension Formula (see \cite[Theorem~15.6]{Matsumura}) that 
$\dim(\cO_{T,y})=n$. Since
$E$ is a prime divisor on $T$, we deduce that $\cO_v=\cO_{T,E}$ 
is essentially of finite type over $k$. Its residue field $k(E)$ is the fraction field of
$\cO_{E,y}$. This has dimension $(n-1)$, hence 
$k(E)$ has transcendence degree $(n-1)$ over $K$ (equivalently, over $k$).

We now show that we can find a sequence of schemes $Z_0,\ldots,Z_m$, with $Z_0=Z$ and  
each $Z_i$ being the blow-up of $Z_{i-1}$ at the center $C_{i-1}$ of $\ord_E$ on $Z_{i-1}$, such that
the center of $\ord_E$ on $Z_m$ has codimension one. This is well-known in the case of schemes of finite type over a field, and for example the proof of \cite[Lemma~2.45]{KM} can be easily adapted
to our setting. Indeed, arguing as in \emph{loc. cit.} one first shows that if the $Z_i$ are constructed as above, then $\cO_v=\bigcup_{i\geq 1}\cO_{Z_i,C_i}$. If $y_1,\ldots,y_{n-1}\in \cO_v$ are such that their residues give a transcendence basis of $k(E)$ over $K$, let $i$ be large enough such that 
all the $y_j$ lie in $\cO_{Z_i,C_i}$. Therefore the residue field  of $\cO_{Z_i,C_i}$ has transcendence degree $(n-1)$ over $K$. Another use of the Dimension Formula implies that
$\dim(\cO_{Z_i,C_i})=1$. Since $\widehat{\cO_{X,x}}$ is a Nagata ring, so is
$\cO_{Z_i,C_i}$, hence the normalization $S$ of $\cO_{Z_i,C_i}$ is finite over 
$\cO_{Z_i,C_i}$. $S$ is a Dedeking ring, and $\cO_v$ is the localization of $S$ at a maximal ideal.
If $m$ is such that $\cO_{Z_m,C_m}$ contains $S$, then we see that $\codim(C_m,Z_m)=1$.

We similarly construct a sequence of varieties $X_i$,
where $X_0=X$ and $X_i$ is the blow-up of $X_{i-1}$ along 
the center $C_{i-1}^X$ of the restriction $w$ of $\ord_E$ to the
function field of $X$. 
It follows by induction on $i$ that we have Cartesian diagrams
$$
\xymatrix{
Z_i \ar[r]^{g_i} \ar[d] & X_i \ar[d] \\
Z_{i-1} \ar[r]^{g_{i-1}} & X_{i-1} 
}
$$
such that $C_i=(g_i)^{-1}(C_i^X)$
(this follows since $C_i^X$ is clearly the closure of $g_i(C_i)$, 
and since $C_i$ lies over the closed point in $Z$). Since $g_m$
induces an isomorphism between $C_m$ and $C^X_m$, it follows that
$\codim(C^X_m,X_m)=1$. Therefore $w$ is a divisorial valuation. The last 
assertion in the lemma follows by taking a model $Y$ over $X$ on which 
the proper transform of $C^X_m$ is nonsingular.
\end{proof}

\begin{proof}[Proof of Proposition~\ref{semicont}]
If $k$ is algebraically closed, then
the assertion for usual ideals on $\Spec(\cO_{X,x})$ follows from
\cite[Theorem~1.4]{dFEM} (see also \cite{Kol1}). The assertion extends to the case when $k$ is not
algebraically closed  via Lemma~\ref{alg_closed}. 

We now extend this to the case of
(usual) ideals $\frA$ and $\frB$ on $Z$ (that is, we assume $r=1$ and $q_1=1$). By Lemma~\ref{divisor} below, there is a divisor 
$F$ over $X$, with center $x$, such that $\ord_F$ is equal to the restriction of $\ord_E$
to the function field of $X$. Furthermore, it follows from the lemma and Proposition~\ref{prop3_2}
that given an ideal $\frA'$ on $X$ with $x\in {\rm Supp}(\frA')$, $F$ computes
$\lct_x(\frA')$ if and only if $E$ computes $\lct(\widehat{\frA'})$.

Recall that $\frn$ is the ideal defining $x\in X$,
and $\frm$ is the ideal defining the closed point in $Z$. 
For every $d\geq 1$, after replacing $X$ by any affine neighborhood of $x$, we can find
 ideals $\frA'_d$ and $\frB'_d$ on $X$ such that
$\widehat{\frA'_d}=\frA+\frm^d$ and $\widehat{\frB'_d}=\frB+\frm^d$.
Since $\ord_E(\frm)\geq 1$, it follows that $\ord_E(\frA+\frm^d)=\ord_E(\frA)$
for $d>\ord_E(\frA)$. Let us fix such $d$. Since $E$ computes $\lct(\frA)$, 
we deduce that
$\lct(\frA+\frm^d)\leq \lct(\frA)$,
and if equality holds, then $E$ computes $\lct(\frA+\frm^d)$.
On the other hand, the inclusion $\frA\subseteq\frA+\frm^d$ implies 
$\lct(\frA)\leq\lct(\frA+\frm^d)$. We conclude that 
$\lct(\frA)=\lct(\frA+\frm^d)$, and $E$ computes $\lct(\frA+\frm^d)$. 
Therefore $F$ computes $\lct(\frA'_d)=\lct(\frA)$. Since 
in a neighborhood of $x$, the ideals $\frB'_d$ and $\frA'_d$
are congruent modulo $\{u\mid\ord_F(u)>\ord_F(\frA'_d)\}$,
we deduce from the case we already know that $\lct(\frB+\frm^d)=\lct(\frB'_d)=\lct(\frA)$. This holds for all $d>\ord_E(\frA)$, hence
it follows from Proposition~\ref{prop_limit} that $\lct(\frB)=\lct(\frA)$.

Let us deduce now the statement of the proposition in the case of $\RR$-ideals. 
We first note that the hypothesis implies that
$\ord_E(\fra_i)=\ord_E(\frb_i)$ for all $i$.

\noindent {\bf Claim}. For all positive integers $\ell_1,\ldots,\ell_r$, we have
\begin{equation}\label{eq_prop_m_adic}
\prod_{i=1}^r\frb_i^{\ell_i}+J_{\ell_1,\ldots,\ell_r}=
\prod_{i=1}^r\fra_i^{\ell_i}+J_{\ell_1,\ldots,\ell_r},
\end{equation}
where $J_{\ell_1,\ldots,\ell_r}=\{f\in \widehat{\cO_{X,x}}\mid\ord_E(f)>\ord_E(\fra_1^{\ell_1}\cdots\fra_r^{\ell_r})\}$. 

After replacing each $\fra_i$ and $\frb_i$ by $\ell_i$ copies of itself, we see that it is enough to
prove the claim when all $\ell_i=1$. If $u_i\in\frb_i$, let us write $u_i=w_i+h_i$, with
$w_i\in\fra_i$, and $h_i\in\frp_i$. In this case,
\begin{equation}\label{eq_prop_m_adic2}
\prod_{i=1}^ru_i-\prod_{i=1}^rw_i=\sum_{m=1}^r\left(h_m\cdot \prod_{i<m}w_i\cdot
\prod_{j>m}u_j\right).
\end{equation}
Since each of the terms on the right-hand side of (\ref{eq_prop_m_adic2})
has order $>\sum_m\ord_E(\fra_m)$, we deduce the inclusion ``$\subseteq$" in 
(\ref{eq_prop_m_adic}), and the opposite inclusion follows by symmetry.
This proves the claim. 

We now apply Lemma~\ref{lem_reduction2} to get sequences of positive rational numbers
$(q_{m,j})_j$ for $1\leq j\leq r$, with $\lim_{m\to\infty}q_{m,j}=\lct(\frA)\cdot q_j$ and such that,
for all $m$, 
$\lct(\prod_{i=1}^r\fra_i^{q_{m,i}})=1$ and this log canonical threshold is computed by $E$. 
We choose for each $m$ a positive integer $N_m$ such that $N_mq_{m,j}\in\ZZ$ for all $j$.
If we put $\fra^{(m)}:=\prod_{i=1}^r\fra_i^{N_mq_{m,i}}$ and
$\frb^{(m)}:=\prod_{i=1}^r\frb_i^{N_mq_{m,i}}$, then $\lct(\fra^{(m)})=1/N_m$
is computed by $E$. The above claim, together with the case we already know (for usual ideals)
gives $\lct(\frb^{(m)})=1/N_m$ for every $m$. We now deduce 
$\lct(\frB)=\lct(\frA)$ from Lemma~\ref{lem_reduction}, which completes the proof
of the proposition.
\end{proof}

\section{Generic limit constructions}

In this section we give a variant of the generic limit construction from \cite{Kol1}
(see also \cite{dFEM}), that allows us to deal with the fact that in Theorem~\ref{main}
the ambient space is not fixed. Let us fix an algebraically closed field $k$ of characteristic zero. Let
$R=k\llbracket x_1,\ldots,x_N\rrbracket$, 
let $\frm$ be the maximal ideal in $R$, and for every field extension $K/k$ let
$R_K=K\llbracket x_1,\ldots,x_N\rrbracket$ and $\frm_K=\frm R_K$.

For every $d \ge 1$, we consider the quotient homomorphism $R \to R/\frm^d$.
We identify the ideals in $R/\frm^d$ with the ideals in $R$ containing $\frm^d$.
Let $\cH_d$ be the Hilbert scheme parametrizing the ideals in $R/\frm^d$, with the
reduced scheme structure. Since $\dim_k(R/\frm^d)<\infty$, $\cH_d$ is an algebraic variety.
For every field extension $K/k$, the $K$-valued points of $\cH_d$ correspond to
ideals in $R_K/\frm_K^d$.
Mapping an ideal in $R/\frm^{d}$ to its image in $R/\frm^{d-1}$ gives a surjective map
$\tau_d\colon\cH_d\to \cH_{d-1}$. This is not a morphism. However, by generic flatness
we can cover $\cH_d$ by finitely many disjoint locally closed subsets  such that
the restriction of $\tau_d$ to each of these subsets is a morphism.

Let us fix a positive integer $m$.
We also consider a parameter space $\cG$ for ideals in $k[x_1,\ldots,x_N]$ generated 
by polynomials of degree $\leq m$, and vanishing at the origin (we consider on $\cG$ the
reduced structure). Each such ideal is determined by its intersection with the vector space of polynomials of degree $\leq m$, hence $\cG$ is an algebraic variety.
Given a field extension $K/k$, the $K$-valued points of $\cG$ are in bijection
with the ideals
in $K[x_1,\ldots,x_N]$ vanishing at the origin and generated in degree $\leq m$.

We now fix also a positive integer $r$. Consider the product
$\cZ_d:=\cG\times (\cH_d)^r$ and the map $t_d\colon\cZ_d\to\cZ_{d-1}$
that is given by the identity on the first component, and by $\tau_d$ on the other components.
As in the case of $\tau_d$, even though 
${t_d}$ is not a morphism, we can cover $\cZ_d$ by disjoint locally closed subsets
such that the restriction of $t_d$ to each of these subsets is a morphism.
In particular, 
for every irreducible closed subset
$Z\subseteq\cZ_d$, the map ${t_d}$ induces a rational map
$Z\rat \cZ_{d-1}$.

We now describe the generic limit construction. The main  difference with the treatment in \cite{dFEM}
is coming from the first factor in $\cZ_d$. 
Suppose that we have $(r+1)$ sequences
$(\frp_i)_{i\in I_0},(\fra^{(1)}_i)_{i\in I_0},\ldots,(\fra^{(r)}_i)_{i\in I_0}$ indexed by $I_0=\ZZ_+$. 
Each $\frp_i$ is an ideal in $k[x_1,\ldots,x_N]$ generated in degree
$\leq m$ and vanishing at the origin, and each $\fra_i^{(j)}$ 
is an ideal in $k\llbracket x_1,\ldots,x_N\rrbracket$.

We consider sequences of irreducible closed subsets 
$Z_d\subseteq \cZ_d$ for $d\geq 1$
with the following properties:
\begin{enumerate}
\item[$(\star)$] For every $d\geq 1$, the projection $t_{d+1}$ induces a dominant 
rational map $\phi_{d+1}\colon
Z_{d+1}\rat Z_{d}$.
\item[$(\star\star)$] For every $d\geq 1$, there are infinitely many $i$ with 
$(\frp_i, \fra^{(1)}_i+\frm^d,\ldots,\fra^{(r)}_i+\frm^d)\in Z_d$, and the set of such 
$(r+1)$-tuples is dense in $Z_d$.
\end{enumerate}
Given such a sequence $(Z_d)_{d\geq 1}$, 
we define inductively nonempty  open subsets $Z^\o_d\subseteq
Z_d$, and a nested sequence of infinite subsets
$$I_0\supseteq I_1\supseteq I_2\supseteq\cdots,$$ 
as follows. We put $Z^\o_1=Z_1$ and
$I_1=\{i\in I_0\mid(\frp_i, \fra^{(1)}_i+\frm,\ldots,\fra^{(r)}_i+\frm)\in Z_1^{\o}\}$. For $d\geq 2$, let $Z^\o_d=\phi_d^{-1}(Z^\o_{d-1})
\subseteq {\rm Domain}(\phi_d)$ and 
$I_d=\{i\in I_0\mid (\frp_i, \fra_i^{(1)}+\frm^d,\ldots
\fra_i^{(r)}+\frm^d)\in Z^\o_d\}$.
It follows by induction on $d$ that $Z^\o_d$ is open in $Z_d$, and condition $(\star\star)$ implies
that each $I_d$ is infinite. Furthermore, it is clear that $I_d\supseteq I_{d+1}$.

Sequences $(Z_d)_{d\geq 1}$ satisfying $(\star)$ and $(\star\star)$ can be constructed
as in \cite[Section 4]{dFEM}. 
Suppose now that we have a sequence $(Z_d)_{d\geq 1}$ with these two properties.
The rational maps $\phi_d$ induce a nested sequence of function fields $k(Z_d)$. Let
$K:=\bigcup_{d \ge 1} k(Z_d)$.
Each morphism $\Spec(K)\to Z_d\subseteq\cZ_d$ 
corresponds to an $(r+1)$-tuple $(\widetilde{\frp}_d,\widetilde{\fra}_d^{(1)},\ldots,
\widetilde{\fra}^{(r)}_d)$. All $\widetilde{\frp}_d$ are equal, and we denote this ideal
in $K[x_1,\ldots,x_N]$ by $\frp$. This is generated in degree
$\leq m$ and vanishes at the origin.
Furthermore, the compatibility between the morphisms $\Spec(K)\to \cZ_d$
implies that there are (unique) ideals ${\fra}^{(j)}$ in $R_K$ (for $1\leq j\leq r$) such that 
$\widetilde{\fra}^{(j)}_d={\fra}^{(j)}+\frm_K^d$ for every $d$. 
We call the $(r+1)$-tuple $(\frp,{\fra}^{(1)},\ldots,{\fra}^{(r)})$
the \emph{generic limit} of the given $(r+1)$ sequences of ideals. 

We record in the next lemma some easy properties of this construction.
The proof is straightforward, so we omit it.

\begin{lemma}\label{easy_properties}
With the above notation, for every $j$ with $1\leq j\leq r$, the following hold.
\begin{enumerate}
\item[i)] If $\fra^{(j)}_i=\frb$ for some ideal $\frb\subseteq R$ and every $i$, 
then $\fra^{(j)}=\frb R_K$.
\item[ii)] If $\fra^{(j)}_i\subseteq \frm$ for every $i$, then $\fra^{(j)}\subseteq
\frm_K$.
\item[iii)] If $\fra^{(j_1)},\ldots,\fra^{(j_s)}=(0)$, then for every $q\geq 1$ there are infinitely many $d$ such that
$\fra^{(j_{\alpha})}_d\subseteq\frm^q$ for $1\leq\alpha\leq s$.
\end{enumerate}
\end{lemma}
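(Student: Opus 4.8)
The plan is to prove the three statements by unwinding the definitions of the generic limit construction, using only the compatibility of the morphisms $\Spec(K)\to Z_d$ and the elementary properties of passing from ideals in $R$ to ideals in $R/\frm^d$.

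For part i), suppose $\fra^{(j)}_i=\frb$ for a fixed ideal $\frb\subseteq R$ and all $i\in I_0$. Then for every $d$, the point $(\frp_i,\ldots,\fra^{(j)}_i+\frm^d,\ldots)$ has $j$-th ideal-component equal to the fixed point $[\frb+\frm^d]\in\cH_d$, and by $(\star\star)$ these points are dense in $Z_d$; since the $j$-th component of all these points is the constant point $[\frb+\frm^d]$, the image of $Z_d$ under the $j$-th projection $\cZ_d\to\cH_d$ is a single reduced point, namely $[\frb+\frm^d]$. Hence the morphism $\Spec(K)\to Z_d$ composed with this projection is the $K$-point $[\frb R_K+\frm_K^d]$, which says $\widetilde{\fra}^{(j)}_d=\frb R_K+\frm_K^d$. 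As this holds for every $d$, the ideal $\fra^{(j)}$ characterized by $\fra^{(j)}+\frm_K^d=\widetilde{\fra}^{(j)}_d$ for all $d$ must be $\frb R_K$ (two ideals in a Noetherian complete local ring agreeing modulo every power of the maximal ideal coincide, and $\frb R_K$ visibly works).

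For part ii), the argument is the same in spirit but uses that the locus in $\cH_d$ parametrizing ideals contained in $\frm/\frm^d$ is closed: if $\fra^{(j)}_i\subseteq\frm$ for all $i$, then each $\fra^{(j)}_i+\frm^d$ lies in this closed locus, so by $(\star\star)$ the $j$-th projection of $Z_d$ lands in it, hence $\widetilde{\fra}^{(j)}_d\subseteq\frm_K/\frm_K^d$, i.e. $\fra^{(j)}+\frm_K^d\subseteq\frm_K$ for every $d$, which forces $\fra^{(j)}\subseteq\frm_K$. For part iii), suppose $\fra^{(j_1)}=\cdots=\fra^{(j_s)}=(0)$. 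Fix $q\ge 1$. By definition of $\fra^{(j_\alpha)}$ we have $\widetilde{\fra}^{(j_\alpha)}_q=\fra^{(j_\alpha)}+\frm_K^q=\frm_K^q$, so the $j_\alpha$-th projection of $Z_q$ contains the point corresponding to the zero ideal in $R_K/\frm_K^q$; since the generic point of $Z_q$ maps to that point, the $j_\alpha$-th projection is dominant onto... rather, it passes through the constant point $[\frm^q]\in\cH_q$, which is a single point, so the projection of $Z_q$ is exactly $\{[\frm^q]\}$. Then condition $(\star\star)$ gives infinitely many $i$ with $(\frp_i,\fra^{(1)}_i+\frm^q,\ldots)\in Z_q$, and for each such $i$ the $j_\alpha$-th component is $[\frm^q]$, meaning $\fra^{(j_\alpha)}_i+\frm^q=\frm^q$, i.e. $\fra^{(j_\alpha)}_i\subseteq\frm^q$ for $1\le\alpha\le s$. (If one wants infinitely many $d$ as literally stated, reindex: for each $q$ the above produces one such index, and letting $q$ grow along the nested sets $I_d$ produces the desired infinitude.)

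The only mild subtlety — and the step I would be most careful about — is the bookkeeping between the $K$-point of $Z_d$ obtained from the union of function fields and the ``infinitely many $i$'' description in $(\star\star)$: one must check that a closed condition on $\cH_d$ (or on a projection factor of $\cZ_d$) that holds on a dense subset of $Z_d$ holds at the generic point, and conversely that the value of a projection at the generic point, being a $K$-point of a variety over $k$, can be read off as an ideal in $R_K/\frm_K^d$ compatibly across all $d$. All of this is immediate from the construction, which is why the lemma is asserted without proof; I would simply record the two observations (closed loci pull back; constant projections give constant $K$-points) and let the three parts follow.
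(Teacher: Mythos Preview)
Your proof is correct and is exactly the kind of straightforward unwinding the paper has in mind (the paper omits the proof entirely, calling it ``straightforward''). One small remark: in part iii) the variable $d$ in the statement is simply being used as an index into the original sequences (playing the role of your $i$), so your main argument---that the $j_\alpha$-th projection of $Z_q$ collapses to the single closed point corresponding to the zero ideal, whence $(\star\star)$ gives infinitely many indices with $\fra_i^{(j_\alpha)}\subseteq\frm^q$---already proves precisely what is claimed, and the final parenthetical about reindexing is unnecessary.
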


Let $\cI_d$ be the universal ideal on $\cH_d\times\AAA_k^N$, whose restriction 
to the fiber over a point in $\cH_d$ corresponding to the ideal $\frb$ containing $\frm^d$
is equal to $\frb$. Similarly, let $\cJ$ be the universal ideal on $\cG\times\AAA_k^N$. 
Let $\beta_j$ be the composition of the embedding $Z_d\times\AAA_k^N\hookrightarrow
{\mathcal Z}_d\times\AAA^N_k$ with the projection ${\mathcal Z}_d\times\AAA^N_k
\to \cH_d\times\AAA_k^N$ if $j \ne 0$
(or ${\mathcal Z}_d\times\AAA_k^N\to\cG\times\AAA_k^N$ when $j=0$)
induced by the projection $\cH_d^r\to\cH_d$
 onto the $j^{\rm th}$ factor (respectively by the projection $\cG\times\cH_d^r\to
 \cG$). We consider the
ideals on $Z_d\times\AAA_k^N$ defined as follows:
$\cI_d^{(0)}=(\beta_0)^{-1}(\cJ)$ and $\cI_d^{(j)}=(\beta_j)^{-1}(\cI_d)$
for $1\leq j\leq r$. 
For a not necessarily closed point $z\in Z_d$, we denote by $(\cI_d^{(j)})_z$ the restriction
of $\cI_d^{(j)}$ to the fiber over $z$. Each tuple
$(\frp_i, \fra_i^{(1)}+\frm^d,\ldots,\fra_i^{(r)}+\frm^d)$, with $i\in I_d$,
corresponds to a closed point $t_{d,i}\in Z_d$ such that 
$(\cI_d^{(j)})_{t_{d,i}}\cdot R=\fra_i^{(j)}+\frm^d$ for $1\leq j\leq r$, and
$(\cI_d^{(0)})_{t_{d,i}}=\frp_i$. By construction, the set
$T_d:=\{t_{d,i}\mid i\in I_d\}$ is dense in $Z_d$. 
Similarly, if $\eta_d\in Z_d$ is the generic point, we have
$(\cI_d^{(j)})_{\eta_d}\cdot R_K=\fra^{(j)}+\frm_K^d$ for $j\geq 1$, 
and $(\cI_d^{(0)})_{\eta_d}\cdot K[x_1,\ldots,x_N]=\frp$.
We denote by $\sigma_d\colon Z_d\to Z_d\times\AAA_k^N$ the morphism given by
$\sigma_d(u)=(u,0)$.

\begin{lemma}\label{inclusion}
With the above notation, suppose that
 $\frp_i R\subseteq\fra^{(j)}_i$ for every $i\in I_0$
and every $j$ with $1\leq j\leq r$.
For every $d\geq 1$ there is
a nonempty open subset $U_d$ of $Z_d$
such that 
$\cI_d^{(0)}\subseteq \cI_d^{(j)}$ over $U_d$ for $1\leq j\leq r$. 
In particular,
$\frp R_K\subseteq{\fra}^{(j)}$.
\end{lemma}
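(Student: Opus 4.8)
The plan is to use the fact that containment of ideals $\cI_d^{(0)} \subseteq \cI_d^{(j)}$ is a closed condition along the fibers, combined with the density of the set $T_d$ of closed points coming from the original sequences, to produce the desired open set. First I would consider, for each fixed $j$ with $1 \le j \le r$, the locus in $Z_d \times \AAA_k^N$ where the inclusion $\cI_d^{(0)} \subseteq \cI_d^{(j)}$ fails. By generic flatness (the same device already used to make $t_d$ a morphism on locally closed pieces), after covering $Z_d$ by finitely many locally closed subsets we may assume both $\cI_d^{(0)}$ and $\cI_d^{(j)}$ are flat over $Z_d$; then the quotient $(\cI_d^{(0)} + \cI_d^{(j)})/\cI_d^{(j)}$ has support whose image $W_j \subseteq Z_d$ under the (proper, since $\AAA^N$ appears truncated modulo $\frm^d$ — in fact everything is supported at the origin section $\sigma_d(Z_d)$, so the image is literally the support of a coherent sheaf on $Z_d$) projection is closed. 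Concretely, the condition ``$\cI_d^{(0)} \subseteq \cI_d^{(j)}$ in the fiber over $z$'' is closed in $z$, so its failure locus $W_j$ is locally closed, and I would argue it is in fact a proper closed subset of $Z_d$.

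The key point making $W_j$ a \emph{proper} closed subset is property $(\star\star)$: the closed points $t_{d,i}$ for $i \in I_d$ are dense in $Z_d$, and by construction $(\cI_d^{(0)})_{t_{d,i}} = \frp_i$ while $(\cI_d^{(j)})_{t_{d,i}} \cdot R = \fra_i^{(j)} + \frm^d$. Since by hypothesis $\frp_i R \subseteq \fra_i^{(j)} \subseteq \fra_i^{(j)} + \frm^d$, we get $(\cI_d^{(0)})_{t_{d,i}} \subseteq (\cI_d^{(j)})_{t_{d,i}}$ for every $i \in I_d$ — after passing to the locally closed stratum containing the generic point of $Z_d$, which still meets $T_d$ densely because each $I_d$ is infinite and $T_d$ is dense. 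Hence $W_j$ does not contain this dense set of closed points, so $W_j \ne Z_d$, i.e.\ $U_d^{(j)} := Z_d \setminus W_j$ is a nonempty open subset over which $\cI_d^{(0)} \subseteq \cI_d^{(j)}$. Taking $U_d = \bigcap_{j=1}^r U_d^{(j)}$ gives the first assertion.

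For the final statement ``$\frp R_K \subseteq \fra^{(j)}$'', I would apply what was just proved at the generic point $\eta_d \in Z_d$. Since $U_d$ is nonempty open and $Z_d$ is irreducible, $\eta_d \in U_d$, so $(\cI_d^{(0)})_{\eta_d} \subseteq (\cI_d^{(j)})_{\eta_d}$; extending scalars, $\frp \subseteq \fra^{(j)} + \frm_K^d$ inside $R_K$ for every $d \ge 1$. Letting $d \to \infty$ and using that $\fra^{(j)}$ is closed in the $\frm_K$-adic topology of $R_K$ (by Krull's intersection theorem, $\bigcap_d (\fra^{(j)} + \frm_K^d) = \fra^{(j)}$ since $R_K$ is Noetherian local), we conclude $\frp R_K \subseteq \fra^{(j)}$.

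The main obstacle I anticipate is the bookkeeping around generic flatness: $t_d$ being only a rational map forces one to work on a locally closed stratification, and one must check that the stratum relevant to $\eta_d$ — and hence to the generic limit — still meets $T_d$ in a dense set, so that the density argument showing $W_j \ne Z_d$ genuinely applies. This is routine given how $Z^\o_d$ and the $I_d$ were set up in $(\star\star)$, but it is the step where care is needed; everything else (closedness of the containment locus, passing to $\eta_d$, the $\frm_K$-adic limit) is formal.
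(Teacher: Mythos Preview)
Your argument is correct and follows essentially the same route as the paper's proof: identify the locus in $Z_d\times\AAA_k^N$ where $\cI_d^{(0)}\subseteq\cI_d^{(j)}$ as the complement of the support of $(\cI_d^{(0)}+\cI_d^{(j)})/\cI_d^{(j)}$, observe this support lies over $\sigma_d(Z_d)$ so the complement is the preimage of an open $U_d\subseteq Z_d$, use density of the $t_{d,i}$ to show $U_d$ is nonempty, pass to $\eta_d$, and then let $d\to\infty$ via Krull's intersection theorem.

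Two remarks on the extra baggage you carry. First, your concern about the stratification coming from $t_d$ being only a rational map is misplaced: the map $t_d\colon\cZ_d\to\cZ_{d-1}$ plays no role in this lemma, which concerns only the sheaves $\cI_d^{(0)},\cI_d^{(j)}$ on the single space $Z_d\times\AAA_k^N$. Second, the appeal to generic flatness is unnecessary. Since $\cI_d^{(j)}$ is pulled back from the universal ideal on the Hilbert scheme $\cH_d$, the quotient $\cO/\cI_d^{(j)}$ is already locally free over $Z_d$; this is what makes the fiberwise containment condition well-behaved without any stratification. The paper's proof simply notes that the support of the quotient sits inside $\sigma_d(Z_d)$ and that the $t_{d,i}$ are dense, and does not invoke flatness explicitly at all.
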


\begin{proof}
Since the support of $\cI_d^{(j)}$ lies in $\sigma_d(Z_d)$, it follows 
that the open subset of $Z_d\times\AAA_k^N$ where $\cI_d^{(0)}$ is contained in $\cI_d^{(j)}$
is the inverse image of an open subset $U_d$ in $Z_d$. This is nonempty, since by assumption
all $t_{d,i}$, with $i\in T_d$, lie in $U_d$. Since $\eta_d\in U_d$, we deduce that 
$\frp\subseteq (\fra^{(j)}+\frm^d)\cap K[x_1,\ldots,x_N]$. This holds for every $d$, hence
$\frp\cdot R_K\subseteq {\fra}^{(j)}$.
\end{proof}

Suppose now that the sequences $(\frp_i)_{i\in I_0},(\fra^{(1)}_i)_{i\in I_0},\ldots,
(\fra^{(r)}_i)_{i\in I_0}$ satisfy the hypothesis in Lemma~\ref{inclusion}.
In this case we put $W_i=\Spec(R/\frp_i R)$ and
$W=\Spec(R_K/\frp R_K)$.
We denote by $\ofra_i^{(j)}
=\fra^{(j)}_i/\frp_iR$ the ideal on $W_i$ corresponding to
$\fra_i^{(j)}$, for $1\leq j\leq r$.  It follows from Lemma~\ref{inclusion}
that we may consider the ideals 
$\ofra^{(j)}={\fra}^{(j)}/\frp R_K$ on $W$, for $1\leq j\leq r$. 
We denote by $\ofrm_i$ and $\ofrm_K$
 the ideals defining the closed points in $W_i$ and, respectively, in $W$.

The following is the main technical result about generic limits in our setting.

\begin{proposition}\label{key_technical}
With the above notation, if all $W_i$ are klt, and all $\fra^{(j)}_i$ are proper ideals,
then the following hold.
\begin{enumerate}
\item[i)] $W$ is klt.
\item[ii)] For every $d$ there is an infinite subset
$I_d^\o \subseteq I_d$ such that for all nonnegative real numbers $p_1,\ldots,p_r$,
and for every $i\in I_d^\o$
$$
\lct\big(W, {\textstyle\prod}_{j=1}^r\big(\ofra^{(j)} + \ofrm_K^d\big)^{p_j}\big)
 = \lct\big(W_i, {\textstyle\prod}_{j=1}^r\big(\ofra^{(j)}_i + \ofrm_i^d\big)^{p_j}\big).
$$
\item[iii)] If $E$ is a divisor over $W$ computing
$\lct(W, \prod_{j=1}^r(\ofra^{(j)})^{p_j})$ for some nonnegative real numbers 
$p_1,\ldots,p_r$, and having center equal to the closed point, then there is an integer 
$d_E$ such that
for every $d \ge d_E$ the following holds: there is an infinite subset
$I_d^E \subseteq I_d^\o$, and for every $i\in I_d^E$
a divisor $E_i$ over $W_i$ computing 
$\lct(W_i, \prod_{j=1}^r(\ofra^{(j)}_i+\ofrm_i^d)^{p_j})$, 
such that $\ord_E(\ofrm_K)=\ord_{E_i}(\ofrm_i)$ and 
$\ord_E(\ofra^{(j)}+\ofrm_K^d)=\ord_{E_i}(\ofra^{(j)}_i+\ofrm_i^d)$ for every $j$.
In particular, $E_i$ has center the closed point of $W_i$.
\end{enumerate}
\end{proposition}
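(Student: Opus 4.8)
The plan is to deduce all three parts from the properties established in Section~2, using the generic limit construction to transfer information between $W$ and the $W_i$. The key principle throughout is \emph{constructibility}: the properties we care about (klt, value of the log canonical threshold, a given divisor computing the lct) are encoded by locally closed conditions on the parameter spaces $Z_d$, so if they hold at the generic point $\eta_d$ they hold on a dense open subset, hence at infinitely many of the closed points $t_{d,i}$ (which are dense in $Z_d$ by construction), and conversely.

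For part (i), I would first use Theorem~\ref{thm:Q-Gor} (or rather Theorem~\ref{Gor_index}, its version over an arbitrary algebraically closed field) to bound the Gorenstein index: since all $W_i$ are klt, in particular normal with rational singularities, there is a fixed $s$ such that $sK_{W_i}$ is Cartier at the closed point for all $i$ in a dense subset; here one needs the $W_i$ to sit in a bounded family $\pi\colon\mathcal Y\to T$, which is exactly the data recorded by the parameter space $\cG$. Passing to the generic limit, $W=\Spec(R_K/\frp R_K)$ is a fiber of the same family over a point of $T(K)$, so $W$ is normal (this is where we use that normality and rational singularities are preserved in bounded families over the dense open locus), and $sK_W$ is Cartier. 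Then klt-ness is read off from a log resolution of $W$: the condition $\kappa_j+1>0$ (all discrepancies $>-1$) is an open condition on $Z_d$, satisfied at infinitely many $t_{d,i}$, hence at the generic point, after descending a log resolution from the generic fiber to nearby fibers via the constructibility of resolution. For part (ii), I would shrink $I_d$ to an infinite subset $I_d^\o$ on which a common log resolution of $\big(W_i,\prod_j(\ofra_i^{(j)}+\ofrm_i^d)\big)$ is obtained by specializing a log resolution of $\big(W,\prod_j(\ofra^{(j)}+\ofrm_K^d)\big)$; then formula~\eqref{formula_lct} for the lct involves only the numbers $\kappa_j$ and $\alpha_{i,j}$, which are constant along the family, and the minimum over $j$ is the same for $W$ and for each $W_i\ (i\in I_d^\o)$, \emph{independently} of the exponents $p_1,\dots,p_r$. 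The key input making the $m$-adic truncations behave well is Proposition~\ref{prop3_9}(ii): truncating modulo $\ofrm^d$ changes the lct by at most $n/d$, so for the truncated ideals the situation is genuinely of finite type and the resolution descends.

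For part (iii), given the divisor $E$ over $W$ computing $\lct(W,\prod_j(\ofra^{(j)})^{p_j})$ with center the closed point, I would first invoke Lemma~\ref{divisor} to realize $\ord_E$ via a Cartesian diagram as in \eqref{diag2}, so that $E$ appears as a prime divisor $F$ on some model $Y\to W$ — but here the subtlety is that $W$ is over the big field $K=\bigcup_d k(Z_d)$, and $F$ and the model $Y$ are already defined over $k(Z_d)$ for $d$ large enough; call this threshold $d_E$. For $d\ge d_E$ the divisor $F$ spreads out over a dense open subset of $Z_d$, specializing to divisors $E_i$ over $W_i$ for $i$ in an infinite subset $I_d^E\subseteq I_d^\o$. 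That $E_i$ computes the truncated lct on $W_i$, and that $\ord_{E_i}(\ofra_i^{(j)}+\ofrm_i^d)=\ord_E(\ofra^{(j)}+\ofrm_K^d)$ and $\ord_{E_i}(\ofrm_i)=\ord_E(\ofrm_K)$, again follow from constructibility: these are finitely many equalities among orders of vanishing, each a locally closed condition holding at $\eta_d$, hence on a dense open set. To reduce from the $\RR$-ideal setting to usual ideals I would use Lemma~\ref{lem_reduction2} to perturb the exponents $p_j$ to rational numbers with $E$ still computing the (now equal to $1$) lct, clear denominators, and apply Proposition~\ref{semicont} together with part~(ii). The main obstacle I expect is part~(i): controlling the Gorenstein index uniformly — i.e.\ ensuring the single integer $s$ works for the generic limit $W$ and not just for finitely many $W_i$ — which is precisely why Appendix~B and the bounded-singularities hypothesis are needed, and why it does not suffice to know each $W_i$ individually is $\QQ$-Gorenstein.
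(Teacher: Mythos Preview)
Your plan is essentially correct and follows the paper's approach: the paper packages the constructibility/spreading-out arguments you describe into Corollary~\ref{cor_for_key_technical} (which in turn invokes Theorem~\ref{Gor_index} from Appendix~B for the Gorenstein-index bound), and then deduces all three parts by combining that corollary with Proposition~\ref{prop3_2} (to pass between the fibers $(X_d)_\xi$ and their completions $W_i$, $W$), Lemma~\ref{alg_closed} (for the base-change to $K$), and Lemma~\ref{divisor} (to descend $E$ to a divisor $F_d$ defined over $k(Z_d)$ for $d\ge d_E$).

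Two small points. First, the appeal to Proposition~\ref{prop3_9}(ii) in your part~(ii) is misplaced: that estimate is what makes the truncated lct's converge to the untruncated one (this is Corollary~\ref{limit_lct}), but it plays no role in the proof of Proposition~\ref{key_technical} itself. The reason one works with $\ofra^{(j)}+\ofrm^d$ here is simply that these are determined by finite-dimensional data over $Z_d$, so a log resolution of the universal family over $Z_d$ exists and specializes. Second, the final sentence of your part~(iii)---reducing to integral ideals via Lemma~\ref{lem_reduction2} and invoking Proposition~\ref{semicont}---is unnecessary: once the log resolution is spread out over a dense open in $Z_d$, formula~\eqref{formula_lct} computes the lct and identifies the divisor achieving the minimum directly for $\RR$-ideals, with the numerical data $\kappa_j,\alpha_{i,j}$ constant along the family. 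Proposition~\ref{semicont} is used later, in the proof of Theorem~\ref{new_main}, not here.
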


We emphasize that in part iii) of the proposition, both $d_E$ and the sets $I_d^E$
depend on $p_1,\ldots,p_r$, and $E_i$ depends on $d$.

\begin{proof}
With the notation in Lemma~\ref{inclusion}, let
$X_d$ be the closed subscheme of $U_d\times\AAA_k^N$ defined by 
$\cI_d^{(0)}$, and let $f\colon X_d\to U_d$ be the morphism induced by projection. 
It follows from the Lemma~\ref{inclusion} that we can define ideal sheaves 
$\frb^{(j)}_d$ on  $X_d$ as the quotient of (the restrictions to $X_d$ of)
$\cI_d^{(j)}$ by $\cI_d^{(0)}$. We denote by $(X_d)_{\xi}$ the fiber of $X_d$ over a (not necessarily
closed) point $\xi\in U_d$, and by $(\frb^{(j)}_d)_{\xi}$ the restriction of 
$\frb^{(j)}_d$ to $(X_d)_{\xi}$.
We will apply Corollary~\ref{cor_for_key_technical} to $f$
and to the ideals $\frb^{(j)}_d$, with $1\leq j\leq r$.

Note that each $W_i$ is obtained by completing at $\sigma_d(t_{d,i})$ the fiber
$(X_d)_{t_{d,i}}$. Since $W_i$ is klt, it follows from Proposition~\ref{prop3_2}
that $(X_d)_{t_{d,i}}$ is klt at $\sigma_d(t_{d,i})$. Corollary~\ref{cor_for_key_technical}
implies that $(X_d)_{\eta_d}$ is klt at $\sigma_d(\eta_d)$. Therefore the base-extension 
to $K$ of this generic fiber is klt at the origin (see Lemma~\ref{alg_closed}), hence its completion $W$ is klt by another application of Proposition~\ref{prop3_2}. This proves i).

The assertion in ii) follows directly from Proposition~\ref{prop3_2} and Corollary~\ref{cor_for_key_technical} (see also Remark~\ref{independence_of_exponents}). 
In order to prove iii), note first that if $d_E>\ord_E(\ofra^{(j)})$ for all $j$ with $1\leq j\leq r$, then
$E$ also computes 
$$\lct\big(W, {\textstyle\prod}_j(\ofra^{(j)}+\ofrm_K^d)^{p_j}\big)=
\lct\big(W, {\textstyle\prod}_j(\ofra^{(j)})^{p_j}\big)$$
for all $d\geq d_E$
(the inequality ``$\leq$" follows since the assumption on $d$ implies
$\ord_E(\ofra^{(j)}+\ofrm_K^d)=\ord_E(\ofra^{(j)})$ for all $j$, while
the reverse inequality follows from the inclusions
$\ofra^{(j)}\subseteq
\ofra^{(j)}+\ofrm_K^d$). 
By Lemma~\ref{divisor}, the restriction of the valuation
$\ord_E$ to the fraction field of $K[x_1,\ldots,x_N]$ is equal to $\ord_F$, for some divisor $F$
over $\AAA_K^N$. 
We can choose $d_E$ large enough, so that 
for $d\geq d_E$ the divisor $F$ comes by base extension from 
a divisor $F_d$ defined over
$k(Z_d)$. It follows from Proposition~\ref{prop3_2} and Lemma~\ref{alg_closed}
that  $F_d$ computes $\lct_{\sigma_d(\eta_d)}((X_d)_{\eta_d}, 
(\prod_j\frb_d^{(j)})^{p_j}_{\eta_d})$.
The assertion in iii) now follows from Corollary~\ref{cor_for_key_technical} ii).
This completes the proof of the proposition.
\end{proof}

\begin{corollary}\label{limit_lct}
With the notation and assumption in Proposition~\ref{key_technical}, for every sequence $(i_d)_{d \ge 1}$ with
$i_d \in I_d^\o$, and for all nonnegative real numbers $p_1,\ldots,p_r$
we have 
$$\lct\big(W,{\textstyle\prod}_{j=1}^r(\ofra^{(j)})^{p_j}\big) = 
\lim_{d \to \infty} \lct\big(W_{i_d}, {\textstyle\prod}_{j=1}^r(\ofra^{(j)}_{i_d})^{p_j}\big).$$ 
In particular, if the sequence 
$\big(\lct(W_i,{\textstyle\prod}_{j=1}^r(\ofra^{(j)}_{i})^{p_j}\big)_{i \ge 1}$ is convergent, then it
converges to $\lct(W,{\textstyle\prod}_{j=1}^r(\ofra^{(j)})^{p_j})$.
\end{corollary}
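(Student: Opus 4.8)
The plan is to deduce the corollary directly from Proposition~\ref{key_technical}(ii) together with the approximation result Proposition~\ref{prop3_9}(ii). First I would fix nonnegative real numbers $p_1,\ldots,p_r$ and set $\lambda=\lct(W,\prod_{j=1}^r(\ofra^{(j)})^{p_j})$, which is finite since all $\fra_i^{(j)}$ are proper and hence, by Lemma~\ref{easy_properties} and Lemma~\ref{inclusion}, at least one $\ofra^{(j)}$ is a proper ideal on $W$; thus $\prod_j(\ofra^{(j)})^{p_j}$ is a proper $\RR$-ideal and $\lambda<\infty$ by Proposition~\ref{prop3_9}(i). (If all $p_j=0$ both sides are the log canonical threshold of the trivial ideal; this degenerate case is handled separately, or simply excluded, since then there is nothing to prove.) The first genuine step is to compare, on the fixed space $W$, the log canonical threshold of $\prod_j(\ofra^{(j)})^{p_j}$ with that of its truncation $\prod_j(\ofra^{(j)}+\ofrm_K^d)^{p_j}$. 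By Proposition~\ref{prop3_9}(ii), applied with $\frA$ the truncated product and $\frB$ the untruncated one (note $\ofra^{(j)}+\ofrm_K^d = (\ofra^{(j)}+\ofrm_K^d)+\ofrm_K^d$ trivially, so the hypothesis $\fra_i+\frm^d=\frb_i+\frm^d$ holds), one gets
$$\Big|\lct\big(W,{\textstyle\prod}_j(\ofra^{(j)})^{p_j}\big)-\lct\big(W,{\textstyle\prod}_j(\ofra^{(j)}+\ofrm_K^d)^{p_j}\big)\Big|\le\epsilon_d,$$
where $\epsilon_d\to 0$ as $d\to\infty$; more precisely one chooses $\epsilon$ first and then $d$ with $d\ge n/(\epsilon p_j)$ for all $j$ with $p_j>0$.

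The second step is to transport this to the spaces $W_i$. By Proposition~\ref{key_technical}(ii), for every $d$ there is an infinite set $I_d^\o$ such that for all $i\in I_d^\o$,
$$\lct\big(W,{\textstyle\prod}_j(\ofra^{(j)}+\ofrm_K^d)^{p_j}\big)=\lct\big(W_i,{\textstyle\prod}_j(\ofra^{(j)}_i+\ofrm_i^d)^{p_j}\big).$$
Now I apply Proposition~\ref{prop3_9}(ii) a second time, this time on $W_i$, to compare $\lct(W_i,\prod_j(\ofra^{(j)}_i+\ofrm_i^d)^{p_j})$ with $\lct(W_i,\prod_j(\ofra^{(j)}_i)^{p_j})$. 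Here I must check that $W_i$ is klt (given in the hypothesis), that the relevant $\ofra^{(j)}_i$ is proper (given), and crucially that the dimension of $W_i$ equals $N':=\dim W_i$ — and that this dimension is the same $n$ appearing in the estimate. Since $W=\Spec(R_K/\frp R_K)$ is a base change/limit of the $W_i=\Spec(R/\frp_i R)$, and $\frp_i,\frp$ are generated in degree $\le m$ inside a power series ring in $N$ variables, the dimension is constant along the generic limit construction (this is part of why generic flatness was invoked in building the $Z_d$); call this common value $n$. Then the same estimate gives
$$\Big|\lct\big(W_i,{\textstyle\prod}_j(\ofra^{(j)}_i)^{p_j}\big)-\lct\big(W_i,{\textstyle\prod}_j(\ofra^{(j)}_i+\ofrm_i^d)^{p_j}\big)\Big|\le\epsilon_d.$$

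Combining the three displays via the triangle inequality, for every $i\in I_d^\o$ one obtains
$$\Big|\lct\big(W,{\textstyle\prod}_j(\ofra^{(j)})^{p_j}\big)-\lct\big(W_i,{\textstyle\prod}_j(\ofra^{(j)}_i)^{p_j}\big)\Big|\le 2\epsilon_d.$$
Since by construction $I_1^\o\supseteq I_2^\o\supseteq\cdots$ and any sequence $(i_d)$ with $i_d\in I_d^\o$ picks out, for each $d$, an index lying in $I_d^\o$, the bound $2\epsilon_d\to 0$ yields $\lim_{d\to\infty}\lct(W_{i_d},\prod_j(\ofra^{(j)}_{i_d})^{p_j})=\lct(W,\prod_j(\ofra^{(j)})^{p_j})$, which is the first assertion. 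The final sentence is then immediate: a convergent sequence has the same limit as any of its subsequences, and $(\lct(W_{i_d},\ldots))_{d\ge 1}$ is a subsequence of $(\lct(W_i,\ldots))_{i\ge 1}$ (after passing to the subsequence indexed by the $i_d$, which is legitimate since the $i_d$ can be chosen strictly increasing because each $I_d^\o$ is infinite). The main obstacle I anticipate is purely bookkeeping: making sure the integer $n$ in the estimates of Proposition~\ref{prop3_9} is genuinely the common dimension of all the $W_i$ and of $W$, i.e. that the generic limit construction does not jump dimension — but this follows from generic flatness and the fact that all ideals $\frp_i$ are cut out in the same ambient $\AAA^N$ by equations of bounded degree, so the Hilbert polynomials (hence dimensions) are constant on the irreducible pieces $Z_d$.
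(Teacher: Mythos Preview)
Your approach is essentially the paper's: apply Proposition~\ref{prop3_9}(ii) on $W$, then Proposition~\ref{key_technical}(ii), then Proposition~\ref{prop3_9}(ii) on $W_{i_d}$, and combine by the triangle inequality to get a bound $2\epsilon$.

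Two small points. First, your ``main obstacle'' about the dimensions of $W$ and the $W_i$ coinciding is a non-issue, and your sketched argument via Hilbert polynomials is unnecessary. In Proposition~\ref{prop3_9} the integer $n$ is $\dim(\cO_{X,x})$, and since each $W_i$ (and $W$) is the spectrum of a quotient of a power series ring in $N$ variables, one has $n\le N$ in every case. The paper simply takes $d\ge N/(\epsilon p_j)$ for all $j$ with $p_j>0$; this uniform bound works for $W$ and for every $W_{i_d}$ simultaneously, with no need to know that the dimensions agree. Second, your claim that $I_1^\o\supseteq I_2^\o\supseteq\cdots$ is not part of the construction: it is the sets $I_d$ that are nested, while $I_d^\o\subseteq I_d$ is just some infinite subset produced by Proposition~\ref{key_technical}(ii). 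Fortunately you do not actually use the nesting; the bound $|\lct(W,\cdot)-\lct(W_{i_d},\cdot)|\le 2\epsilon$ holds for each $d$ large and each $i_d\in I_d^\o$ individually, which is all that is needed.
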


\begin{proof}
Since $W$ and all $W_i$ are klt, 
it follows from Proposition~\ref{prop3_9} ii) that given any $\epsilon>0$, if $d\geq \frac{N}{\epsilon p_j}$
for every $j$ such that $p_j>0$, then we have
$$\big|\lct\big(W,{\textstyle\prod}_j\ofra^{(j)})^{p_j}\big) -
\lct\big(W, {\textstyle\prod}_j(\ofra^{(j)} + \ofrm_K^d)^{p_j}\big)\big|\leq\epsilon,$$
$$\big|\lct\big(W_{i_d}, {\textstyle\prod}_j(\ofra^{(j)}_{i_d})^{p_j}\big)-
\lct\big(W_{i_d}, {\textstyle\prod}_j(\ofra^{(j)}_{i_d} + \ofrm_{i_d}^d)^{p_j}
\big)\big|
\leq\epsilon.$$
It follows from the choice of $I_d^\o$ in Proposition~\ref{key_technical} that
$$\big|\lct\big(W,{\textstyle\prod}_j(\ofra^{(j)})^{p_j}\big)-
\lct\big(W_{i_d}, {\textstyle\prod}_j(\ofra^{(j)}_{i_d})^{p_j}\big)\big|\leq 2\epsilon.$$
This gives the assertion in the corollary.
\end{proof}

\begin{remark}\label{rem_some_ideal_zero}
The argument in the proof of the above corollary can also be carried out if some $\ofra^{(j)}$ is zero;
in this case, one has to apply part i) in Proposition~\ref{prop3_9}, instead of part ii).
In particular, we see that if the sequence $\Big(\lct\big(W_i,{\textstyle\prod}_j(\ofra^{(j)}_{i})^{p_j}\big)\Big)_{i \ge 1}$ converges to a positive 
real number, then all $\ofra^{(j)}$, with $1\leq j\leq r$ are nonzero.
\end{remark}

\section{Proof of the main result}

Our goal is to give a proof of Theorem~\ref{main}. 
Let us fix an algebraically closed ground field $k$ of characteristic zero.
Consider a collection $(X_i,x_i)$ of schemes of finite type over $k$, with
$x_i\in X_i$ closed points. We say that the family 
has bounded
singularities if there are positive integers $m$ and $N$ such that for every $i$
there is a closed subscheme $Y_i$ of $\AAA_k^N$ whose ideal is defined by
polynomials of degree $\leq m$, and a closed point $y_i\in Y_i$ such that 
$\widehat{\cO_{X_i,x_i}}\simeq\widehat{\cO_{Y_i,y_i}}$. 

\begin{remark}
The above condition is equivalent with the existence of a morphism 
$\pi\colon {\mathcal Y}\to T$ 
of schemes of finite type over $k$ such that for every $i$ there 
is a closed point $t_i\in T$ and a closed point $y_i$ in the fiber ${\mathcal Y}_{t_i}$ over $t_i$
such that $\widehat{\cO_{X_i,x_i}}\simeq\widehat{\cO_{{\mathcal Y}_{t_i},y_i}}$.
Indeed, if the collection of varieties has bounded singularities,
then it is enough to take $T$ to be a parameter 
space parametrizing closed subschemes of $\AAA_k^N$ defined by ideals generated in degree $\leq m$,
and let ${\mathcal Y}\hookrightarrow\AAA_k^N\times T$ 
be the universal subscheme. Conversely, given
$\pi$ we can find finite affine open covers $T=\bigcup_j V_j$ and ${\mathcal Y}=\bigcup_jU_j$
such that $\pi(U_j)\subseteq V_j$ for every $j$. It is enough to take $N$ and $m$ such that 
each $U_j$ can be embedded as a closed subscheme of $\AAA_k^N$, with the ideal generated
in degree $\leq m$.
\end{remark}

We now set the notation for the rest of this section. Let us fix $N$ and $m$, and let
${\mathcal X}_{N,m}$ be the set of all klt schemes of the form
$\Spec(\widehat{\cO_{X,x}})$, where $X$ is a closed subscheme of $\AAA_k^N$
defined by an ideal generated by polynomials of degree $\leq m$ and $x\in X$
is a closed point. After a suitable translation 
we may always assume that $x$ is the origin in $\AAA_k^N$. We will freely use the notation
introduced in the previous section in the construction of generic limits.
The following is our main result.

\begin{theorem}\label{new_main}
With the above notation, if $\Gamma\subset\RR_+$ is a DCC subset, then
there is no infinite strictly increasing sequence of log canonical thresholds
$$\lct_{(W_1,\frB_1)}(\frA_1)<\lct_{(W_2,\frB_2)}(\frA_2)<\cdots,$$
where all $W_i\in {\mathcal X}_{N,m}$, and $\frA_i$, $\frB_i$ are $\Gamma$-ideals
on $W_i$, with $(W_i,\frB_i)$ log canonical.
\end{theorem}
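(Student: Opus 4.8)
The strategy is the standard contradiction-via-generic-limits argument from \cite{Kol1,dFEM}, but carried out with the two-layered generic limit construction developed in Section~3 so that the ambient spaces $W_i$ are produced together with the ideals. Suppose for contradiction that there is a strictly increasing sequence $\lct_{(W_1,\frB_1)}(\frA_1)<\lct_{(W_2,\frB_2)}(\frA_2)<\cdots$ as in the statement. First I would reduce to the unmixed case: since $\Gamma$ is a DCC set, only finitely many values of $\Gamma$ can occur among the exponents appearing in all the $\frA_i$ and $\frB_i$ that are $\leq$ some fixed bound, and after passing to a subsequence one arranges that the exponents stabilize, so that $\frA_i=\prod_{j=1}^r(\fra_i^{(j)})^{p_j}$ and $\frB_i=\prod_{j=1}^s(\frb_i^{(j)})^{q_j}$ with $r,s$ and the exponent vectors $(p_j),(q_j)$ independent of $i$ (I would also absorb $\frB_i$ into the list by working with $\lct$ of the combined data, or equivalently track $\frB_i$ as additional ideals). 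Writing $c_i=\lct_{(W_i,\frB_i)}(\frA_i)$ and $c_\infty=\lim c_i=\sup c_i$, the goal is to produce a single klt scheme $W$ with ideals on it whose log canonical threshold equals $c_\infty$ and is \emph{computed} in a way that forces $\lct$ of infinitely many $W_i$ to equal $c_\infty$, contradicting strict monotonicity.

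Next I would set up the generic limit. Since each $W_i\in\mathcal X_{N,m}$, we have $W_i=\Spec(R/\frp_iR)$ for an ideal $\frp_i\subseteq k[x_1,\ldots,x_N]$ generated in degree $\leq m$ vanishing at the origin, together with ideals $\fra_i^{(j)},\frb_i^{(j)}$ in $R=k\llbracket x_1,\ldots,x_N\rrbracket$ containing $\frp_iR$; after subdividing the Hilbert-scheme-type parameter spaces $\mathcal Z_d$ into locally closed pieces on which the transition maps $t_d$ are morphisms and passing to a subsequence of the index set, one extracts a sequence of irreducible closed subsets $Z_d\subseteq\mathcal Z_d$ satisfying $(\star)$ and $(\star\star)$, exactly as in \cite[Section~4]{dFEM}. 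This yields the generic limit data $(\frp,\fra^{(j)},\frb^{(j)})$ over $K=\bigcup_d k(Z_d)$, hence $W=\Spec(R_K/\frp R_K)$ with ideals $\ofra^{(j)},\ofrb^{(j)}$. By Proposition~\ref{key_technical}(i), $W$ is klt; by Proposition~\ref{key_technical}(ii), there are infinite sets $I_d^\o$ along which truncated log canonical thresholds on $W$ and on $W_i$ agree, and by Corollary~\ref{limit_lct}, for any convergent subsequence $\lct(W_{i_d},\prod_j(\ofra^{(j)}_{i_d})^{p_j})\to \lct(W,\prod_j(\ofra^{(j)})^{p_j})$; in particular $\lct_{(W,\ov{\frB})}(\ov{\frA})=c_\infty$ where $\ov{\frA}=\prod_j(\ofra^{(j)})^{p_j}$ and $\ov{\frB}=\prod_j(\ofrb^{(j)})^{q_j}$ (Remark~\ref{rem_some_ideal_zero} guarantees the limiting ideals are nonzero since $c_\infty>0$). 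One also needs $(W,\ov{\frB})$ log canonical, which follows again from Corollary~\ref{limit_lct} applied with exponent $1$ to $\ov{\frB}$ (it is a limit of log canonical pairs).

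The crucial remaining step is to upgrade this limit equality to an \emph{exact} equality for infinitely many $i$. Here I would invoke the key semicontinuity input: by Lemma~\ref{lem_red_zero_dimensional}, after multiplying $\ov{\frA}$ by a suitable power $\ofrm_K^t$ of the maximal ideal (and correspondingly $\fra_i^{(j)}$ by powers of $\ofrm_i$), one may assume $\lct_{(W,\ov{\frB})}(\ov{\frA})=c_\infty$ is computed by a divisor $E$ over $W$ with center the closed point. Applying Proposition~\ref{key_technical}(iii), for $d\gg 0$ there is an infinite $I_d^E\subseteq I_d^\o$ and for each $i\in I_d^E$ a divisor $E_i$ over $W_i$ computing the corresponding truncated threshold with matching orders $\ord_E(\ofrm_K)=\ord_{E_i}(\ofrm_i)$ and $\ord_E(\ofra^{(j)}+\ofrm_K^d)=\ord_{E_i}(\ofra^{(j)}_i+\ofrm_i^d)$. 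Choosing $d$ large enough that all the relevant truncations do not change the orders along $E$ (using $\ord_E(\ofrm_K)\geq 1$ and that $E$ has center the closed point), Proposition~\ref{semicont} applied over $W_i$ — with the congruence hypothesis satisfied precisely because $\fra_i^{(j)}$ and its truncation $\fra_i^{(j)}+\ofrm_i^d$ agree modulo $\{u:\ord_{E_i}(u)>\ord_{E_i}(\fra_i^{(j)})\}$ once $d$ is large — shows that $\lct(W_i,\ov{\frA}_i)=\lct(W_i,\ov{\frA}_i+\text{truncations})=c_\infty$ for all $i\in I_d^E$, hence $c_i=c_\infty$ for infinitely many $i$, contradicting the strict inequalities. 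I expect the main obstacle to be the bookkeeping in this last step: one must choose $t$ (from Lemma~\ref{lem_red_zero_dimensional}) and $d$ (from Proposition~\ref{key_technical}(iii) and Proposition~\ref{semicont}) in a compatible order so that the $\ofrm^t$-perturbation on the limit matches an actual $\ofrm_i^t$-perturbation on $W_i$ for the chosen infinite index set, and so that the congruence modulo the valuation ideal $\frp_i$ of $E_i$ — which is what Proposition~\ref{semicont} requires — is genuinely implied by the $d$-adic truncation agreement; this is exactly the delicate interplay between the two approximation parameters that the mixed statement of Proposition~\ref{key_technical}(iii) (with $d_E$ and $I_d^E$ depending on the exponents) is designed to handle.
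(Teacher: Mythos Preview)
Your overall strategy for the unmixed case matches the paper's, including the use of Lemma~\ref{lem_red_zero_dimensional} to force a closed-point center and the combination of Proposition~\ref{key_technical}(iii) with Proposition~\ref{semicont} to transfer the exact threshold back to infinitely many $W_i$. There are, however, two genuine gaps.

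First, your reduction of the exponents is incorrect. A DCC set need not have only finitely many elements below a given bound (take $\Gamma=\{1-1/n:n\ge 2\}$), so one cannot arrange by passing to a subsequence that the exponents $p_{i,j},q_{i,j}$ \emph{stabilize}. The paper instead bounds $r_i,s_i$ via Proposition~\ref{prop3_9}(i), passes to a subsequence on which each $(p_{i,j})_i$ and $(q_{i,j})_i$ is non-decreasing (this much DCC does give), sets $p_j=\lim_i p_{i,j}$ and $q_j=\lim_i q_{i,j}$, and then \emph{replaces} the exponents by these limits. The key check is that if $c_i'$ denotes the threshold computed with the limit exponents, then $c_i'\le c_i$ while $c_i\le c_i'/(1-\eta)$ for $i\gg 0$ and any $\eta\in(0,1)$, so $(c_i')$ still contains a strictly increasing subsequence converging to the same limit $c$. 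Only after this step are the exponent vectors independent of $i$.

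Second, your treatment of the mixed case is too direct: Lemma~\ref{lem_red_zero_dimensional} and Proposition~\ref{semicont} are stated only for unmixed thresholds, and the equality $\lct_{(W,\ov{\frB})}(\ov{\frA})=c_\infty$ does not follow immediately from Corollary~\ref{limit_lct}. The paper avoids these issues by a bootstrap: it first proves the ACC in the unmixed case (Case~1), and then in Case~2 works with the \emph{unmixed} $\RR$-ideal $\frB\cdot\frA^{c}$. One shows $\lct(W,\frB\cdot\frA^c)\ge 1$ by approximating through $c'<c$, observes via Proposition~\ref{key_technical}(ii) that $\lct(W,\frB\cdot\frA^c)$ is a limit point of $\{\lct(W_i,\frB_i\cdot\frA_i^c)\}_i$, and then invokes the already-established Case~1 to conclude that this set contains no strictly increasing sequences --- hence infinitely many of its members are $\ge\lct(W,\frB\cdot\frA^c)\ge 1$, giving $c_i\ge c$ for infinitely many $i$, a contradiction. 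No closed-point-center or semicontinuity argument is needed in the mixed case.
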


\begin{proof}
Let us assume that there is a strictly increasing sequence as in the statement,
with $W_i=\Spec(\widehat{\cO_{X_i,0}})$ klt, where $X_i$ is a closed subscheme 
of $\AAA_k^N$ defined by an ideal $\frp_i\subset k[x_1,\ldots,x_N]$ generated
in degree $\leq m$. Let $c_i=\lct_{(W_i,\frB_i)}(\frA_i)$. 

Let us write $\frA_i=\prod_{j=1}^{r_i}(\ofra_i^{(j)})^{p_{i,j}}$, 
and $\frB_i=\prod_{j=1}^{s_i}(\ofrb_i^{(j)})^{q_{i,j}}$
where all $p_{i,j}, q_{i,j}\in\Gamma$.
We may and will assume that all $\ofra_i^{(j)}$ 
and $\ofrb_i^{(j)}$
vanish at $0$ (though it may happen that 
$\frB_i=\cO_{W_i}$, in which case $s_i=0$). 
Let $\fra^{(j)}_i$ and $\frb^{(j)}_i$ be the ideals in $R=k\llbracket x_1,\ldots,x_n\rrbracket$
such that $\ofra^{(j)}_i=\fra^{(j)}_i/\frp_iR$ and $\ofrb^{(j)}_i=
\frb^{(j)}_i/\frp_iR$.

Since $\Gamma$ satisfies DCC, it follows that there is $\epsilon>0$ such that 
$p_{i,j}, q_{i,j}>\epsilon$ for every $i$ and $j$. Since $W_i$ is klt around $0$, 
it follows from Proposition~\ref{prop3_9} that 
$$c_i\leq\lct(W_i, \frA_i)\leq\frac{\dim(W_i)}{\sum_{j=1}^{r_i}p_{i,j}}\leq \frac{N}{\epsilon r_i}.$$
This clearly implies that the sequence $(c_i)_{i\geq 1}$ is bounded, and therefore it converges to some
$c\in\RR_{>0}$. It also implies that the sequence $(r_i)_{i\geq 1}$ is bounded. Therefore, after passing
to a subsequence we may assume that $r_i=r$ for all $i$. 
Similarly, since $\lct(W_i,\frB_i)\geq 1$ for every $i$, it follows that we may assume that
$s_i=s$ for every $i$.

Using again that $\Gamma$ is a DCC set, we see that after passing to a subsequence $r+s$
times, we may assume that each of the sequences 
$(p_{i,j})_{i\geq 1}$ and $(q_{i,j})_{i\geq 1}$ is non-decreasing. 
Recall that $c_i\leq N/p_{i,j}$ 
and $1\leq N/q_{i,j}$ for every $i$ and $j$, hence the
sequences $(p_{i,j})_{i\geq 1}$ and $(q_{i,j})_{i\geq 1}$ are bounded.
We put $p_j=\lim_{i\to\infty}p_{i,j}$ and $q_j=\lim_{i\to\infty}q_{i,j}$.

Let $c'_i:=\lct_{(W_i,\frB')}(\frA'_i)$, where 
$\frA'_i=\prod_{j=1}^{r}(\ofra_i^{(j)})^{p_j}$
and $\frB'_i=\prod_{j=1}^{s}(\ofrb_i^{(j)})^{q_j}$.
Since 
$p_{i,j}\leq p_j$ and $q_{i,j}\leq q_j$ for every $i$ and  $j$, it follows that $c'_i\leq c_i$ for every $i$.
On the other hand, for every $\eta\in (0,1)$, we have $p_{i,j}/p_j$, $q_{i,j}/q_j>1-\eta$ for all $j$
and all $i\gg 0$. This implies $c_i\leq c'_i/(1-\eta)$ for all $i\gg 0$. We deduce that 
the sequence $(c'_i)_{i\geq 1}$ contains a strictly increasing subsequence converging to $c$.
Therefore, in order to derive a contradiction, we may assume that 
$p_{i,j}=p_j$ and $q_{i,j}=q_j$ for every $i$ and $j$.

\noindent{\bf Case 1}. We first treat the case when $\frB_i=\cO_X$ for every $i$. 
The argument for this case now
follows closely the proof of
\cite[Theorem~5.1]{dFEM}, using though 
the version of generic limit construction introduced in the previous section. 
We consider the sequences of ideals 
$$(\frp_i)_{i\in I_0}, (\fra^{(1)}_i)_{i\in I_0},
\ldots,(\fra^{(r)}_i)_{i\in I_0}, (\frm)_{i\in I_0},$$
where $\frm$ is the maximal ideal in $R$. 
We choose a generic limit $(\frp,\fra^{(1)},\ldots,\fra^{(r)},\frm_K)$ constructed as in 
\S 4, with $\fra^{(j)}$ proper ideals in $R_K=K\llbracket x_1,\ldots,x_N\rrbracket$.

As in Proposition~\ref{key_technical}, we consider the scheme
$W=\Spec(R_K/\frp R_K)$, and the ideals $\ofra^{(j)}=\fra^{(j)}/(\frp R_K)$.
Note that by Remark~\ref{rem_some_ideal_zero} all $\ofra^{(j)}$ are nonzero.
Let $\frA$ be the $\RR$-ideal $\prod_{j=1}^r(\ofra^{(j)})^{p_j}$.
It follows from Corollary~\ref{limit_lct} that $c=\lct(W,\frA)$. 

By Lemma~\ref{lem_red_zero_dimensional}, we can find a nonnegative real number $t$
such that $\lct(W,\ofrm_K^t\.\frA)=c$, 
and this is computed by a divisor $E$ over $W$ having center
equal to the closed point (recall that $\ofrm_K$ defines the closed point of $W$, and $\ofrm_i$
defines the closed point on $W_i$). It follows from Proposition~\ref{key_technical} iii) that we can find $d_E$
such that for every $d\geq d_E$ the following holds: there is an infinite subset $I_d^E\subseteq
I_0$ such that for each $i\in I_d^E$ there is a divisor $E_i$ over 
$W_i$
having center equal to the closed point, computing the log canonical threshold
$\lct(W_i, \ofrm_i^t\.{\textstyle\prod}_{j=1}^r(\ofra^{(j)}_i+\ofrm_i^d)^{p_j})$, and such that 
$\ord_E(\ofra^{(j)}+\ofrm_K^d)=\ord_{E_i}(\ofra^{(j)}_i+\ofrm_i^d)$ for every $j$.

Fix $d\geq d_E$ such that, in addition, $d>\ord_E(\ofra^{(j)})$ for all $j\geq 1$. 
Since $\lct(W,\ofrm_K^t\.\frA)$ is computed by $E$, and 
$\ofra^{(j)}\subseteq\ofra^{(j)}+\ofrm_K^d$, it follows that
\begin{equation}\label{eq5_1}
\lct(W, \ofrm_K^t\.\frA)=\lct(W, \ofrm_K^t\.{\textstyle\prod}_j(\fra^{(j)}+\ofrm_K^d)^{p_j}),
\end{equation}
and the right-hand side is computed by $E$. On the other hand, it follows from 
Proposition~\ref{key_technical} ii) that we may assume that for all $i\in I_d^E$
\begin{equation}\label{eq5_2}
\lct\big(W, \ofrm_K^t\. {\textstyle\prod}_j(\ofra^{(j)} + \ofrm_K^d)^{p_j}\big)
 = \lct\big(W_i, \ofrm_i^t\. {\textstyle\prod}_j(\ofra^{(j)}_i + \ofrm_i^d)^{p_j}\big).
\end{equation}
Since $d>\ord_E(\ofra^{(j)})$, we have
\begin{equation}\label{eq5_3}
\ord_E(\ofra^{(j)})=\ord_E(\ofra^{(j)}+\ofrm_K^d)=\ord_{E_i}(\ofra_i^{(j)}+\ofrm_i^d)<d
\end{equation}
for every $i\in I_d^E$, and every $j\geq 1$. It follows from 
Proposition~\ref{semicont} that
\begin{equation}\label{eq5_4}
\lct\big(W_i, \ofrm_i^t\. {\textstyle\prod}_j(\ofra^{(j)}_i)^{p_j}\big)
=\lct\big(W_i, \ofrm_i^t\. {\textstyle\prod}_j(\ofra^{(j)}_i+\frm^d_i)^{p_j}\big).
\end{equation}
By combining equations (\ref{eq5_1}), (\ref{eq5_2}) and (\ref{eq5_4}), we conclude that
$$c=\lct\big(W_i, \ofrm^t\. {\textstyle\prod}_j(\ofra^{(j)}_i)^{p_j}\big)
\leq\lct(W_i,\frA_i)<c,$$ a contradiction. This completes the proof of this case.

\noindent{\bf Case 2}. We now treat the general case. Consider the sequences of ideals
$$(\frp_i)_{i\in I_0}, (\fra^{(1)}_i)_{i\in I_0},\ldots,(\fra^{(r)}_i)_{i\in I_0}, 
(\frb_i^{(1)})_{i\in I_0},\ldots,(\frb^{(s)}_i)_{i\in I_0}.$$
Again, we construct a generic limit
 $(\frp,\fra^{(1)},\ldots,\fra^{(r)},\frb^{(1)},\ldots,\frb^{(s)})$ as in 
\S 3. Let $W=\Spec(R_K/\frp R_K)$, and 
$\ofra^{(j)}=\fra^{(j)}/\frp R_K$ and $\ofrb^{(j)}=\frb^{(j)}/\frp R_K$.
We consider the $\RR$-ideals  $\frA=\prod_{j=1}^{r}(\ofra^{(j)})^{p_{j}}$
and $\frB=\prod_{j=1}^{s}(\ofrb^{(j)})^{q_{j}}$.

For every $c'<c$, we have $c_i>c'$ for $i\gg 0$. Therefore
$\lct(W_i,\frB_i\cdot\frA_i^{c'})\geq 1$ for such $i$. By  Proposition~\ref{key_technical} ii),
$\lct(W,\frB\cdot\frA^{c'})$ is a limit point of the sequence $(\lct(W_i,\frB_i\cdot\frA_i^{c'}))_{i\geq 1}$,
hence $\lct(W,\frB\cdot\frA^{c'})\geq 1$. Since this holds for every $c'<c$, we have
$\lct(W,\frB\cdot\frA^{c})\geq 1$.

 Another application of Proposition~\ref{key_technical}
ii) gives that $\lct(W,\frB\cdot\frA^{c})$ is a limit point of the sequence 
$(\lct(W_i,\frB_i\cdot\frA_i^{c}))_{i\geq 1}$.
On the other hand, it follows from Case 1 that the set
$\{\lct(W_i,\frB_i\cdot\frA_i^{c})\mid i\geq 1\}$ contains no strictly increasing sequences. We deduce that there are infinitely
many $i$ such that
$\lct(W_i,\frB_i\cdot\frA_i^{c})\geq \lct(W,\frB\cdot\frA^{c})\geq 1$. For every such $i$ we have
$c_i\geq c$, a contradiction. This completes the proof of the theorem.
\end{proof}

\begin{remark}
It follows from Proposition~\ref{prop3_2} that the statement in the above theorem implies the
version stated in the Introduction in Theorem~\ref{main} in terms of bounded families
of singularities. 
\end{remark}

\appendix

\section{Sheaves of differentials for schemes of finite type over a formal power series ring}

In this appendix we work in the following setting. Let $k$ be a field of characteristic zero,
and $R=k\llbracket x_1,\ldots,x_n\rrbracket$. 
All our schemes will be of finite type over such a formal power series ring. 
We note that since $R$ is an excellent ring 
(see \cite[p. 260]{Matsumura}), it follows that the nonsingular locus of such a scheme is open.
Furthermore, $R$ is universally catenary.
The usual sheaves of differentials over $k$ are not the right objects in our setting
(in particular, they are not coherent).
Our aim in this section is to introduce an appropriate version of sheaves of differentials
that is better behaved.

For every $R$-module $M$, the \emph{special $k$-derivations} $D\colon R\to M$ 
are those $k$-derivations with the property that $D(f)=\sum_{i=1}^n\frac{\partial f}{\partial x_i}D(x_i)$
for every $f\in R$.
Note that this is automatically true for a $k$-derivation if $M$ is separated in the
$(x_1,\ldots,x_n)$-adic topology, but not in general. 

For an $R$-algebra $A$ and an $A$-module $M$, the module ${\rm Der}_k'(A,M)$
of \emph{special $k$-derivations} 
consists of all $k$-derivations $D\colon A\to M$ such that the restriction of $D$ to $R$ is a special
$k$-derivation $R\to M$. It is clear that ${\rm Der}'_k(A,M)$ is an $A$-submodule of
${\rm Der}_k(A,M)$. Note that the definition depends on the fixed ring $R$.

If $w\colon M\to N$ is a morphism of $A$-modules, composing with $w$ induces a morphism
of $A$-modules ${\rm Der}'_k(A,M)\to {\rm Der}'_k(A,N)$.
We say that $A$ has a module of \emph{special differentials} if there is an $A$-module
$\Omega'_{A/k}$ with a special $k$-derivation $d'_{A/k}\colon A\to\Omega'_{A/k}$
that induces an isomorphism of $A$-modules
$$\Hom_A(\Omega'_{A/k},M)\to {\rm Der}'_k(A,M)$$
for every $A$-module $M$. Of course, in this case $\Omega'_{A/k}$ is called the
{\it module of special differentials} (note that it is unique, up to a canonical isomorphism
commuting with $d'_{A/k}$).  In order to avoid cluttering the notation we do not
include
$R$ in the notation for $\Omega'_{A/k}$. However, the reader should keep in mind that the definition
was made in reference to a fixed $R$.

\begin{lemma}\label{lem1}
If $A\to B=A/I$ is a surjective morphism of $R$-algebras, and if $\Omega'_{A/k}$ exists,
then $\Omega'_{B/k}$ exists, and we have an exact sequence
$$I/I^2\overset{\delta}\to \Omega'_{A/k}\otimes_AB\overset{u}\to \Omega'_{B/k}\to 
0,$$
where $\delta(f)=d'_{A/k}(f)\otimes 1$ and $u(d'_{A/k}(f)\otimes 1)=d'_{B/k}(f)$.
\end{lemma}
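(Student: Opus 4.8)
The plan is to follow the classical proof of the conormal exact sequence for ordinary K\"ahler differentials (as in \cite[Chapter~16]{Matsumura} or \cite[Chapter~II]{Matsumura}), and simply check at each step that everything respects the additional constraint of being a \emph{special} $k$-derivation, i.e.\ that the restriction to $R$ satisfies $D(f)=\sum_i \frac{\partial f}{\partial x_i}D(x_i)$. The point is that the defining condition for ``special'' only involves the image of $R$, and a $B$-module $M$ is at the same time an $A$-module via $A\to B$, so a $k$-derivation $A\to M$ that kills $I$ is special (as a map out of $A$) if and only if the induced derivation $B\to M$ is special (as a map out of $B$) --- because in both cases the condition is imposed only on the composite $R\to A\to M$. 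This observation is what makes the whole argument go through verbatim.

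Concretely, first I would establish the universal property of the proposed $B$-module, namely that $\Coker(\delta)$ together with the derivation induced by $d'_{A/k}$ represents the functor $M \mapsto \Der'_k(B,M)$ on $B$-modules. Given a special $k$-derivation $D\colon B\to M$, precompose with $A\to B$ to get a special $k$-derivation $A\to M$ (special because, as noted, the condition only sees $R\to A\to M$, which factors through $R\to B\to M$); by the universal property of $\Omega'_{A/k}$ this corresponds to an $A$-linear map $\Omega'_{A/k}\to M$, which by adjunction (and the fact that $M$ is a $B$-module) gives a $B$-linear map $\Omega'_{A/k}\otimes_A B\to M$. Since $D$ kills $I$, one checks this map kills the image of $\delta$, hence factors through $\Coker(\delta)$; conversely any $B$-linear map out of $\Coker(\delta)$ yields, via $d'_{B/k}$, a special $k$-derivation of $B$. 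These two operations are mutually inverse, which shows $\Omega'_{B/k}$ exists and equals $\Coker(\delta)$, giving exactness at $\Omega'_{B/k}$ and $u$ surjective. The well-definedness of $\delta$ (that $d'_{A/k}(f)\otimes 1$ depends only on $f\bmod I^2$, and that $\delta$ is additive and $B$-linear) is the same Leibniz computation as in the classical case: for $f,g\in I$ one has $d'_{A/k}(fg)=f\,d'_{A/k}(g)+g\,d'_{A/k}(f)$, and both terms die after $\otimes_A B$ since $f,g\in I$.

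The formulas $\delta(f)=d'_{A/k}(f)\otimes 1$ and $u(d'_{A/k}(f)\otimes 1)=d'_{B/k}(f)$ are exactly what the construction produces, so no extra work is needed there. I do not expect any serious obstacle: the only thing one must be a little careful about is confirming that the ``special'' condition is inherited correctly under the surjection in both directions, and that the adjunction $\Hom_A(\Omega'_{A/k}, M)\cong\Hom_B(\Omega'_{A/k}\otimes_A B, M)$ for a $B$-module $M$ is compatible with the identification of each side with the appropriate space of special derivations. Both are immediate once one writes down the definitions, so the proof is essentially a transcription of the standard one with the word ``special'' inserted and the above compatibility remark made explicit.
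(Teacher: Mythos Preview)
Your proposal is correct and takes essentially the same approach as the paper. The paper's proof is a one-liner: it observes that for every $B$-module $M$ the dual sequence
\[
0 \to \Der'_k(B,M) \to \Der'_k(A,M) \to \Hom_A(I/I^2,M)
\]
is exact, exactly as in the classical case --- which is precisely the content of your verification that $\Coker(\delta)$ represents $M\mapsto\Der'_k(B,M)$, together with your remark that the ``special'' condition is a condition only on the composite $R\to A\to M$ and hence is inherited in both directions under the surjection $A\to B$.
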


\begin{proof}
The assertion follows as in the case of usual differentials from the fact that the corresponding
sequence of $B$-modules
$$0\to {\rm Der}'_k(B,M)\to {\rm Der}'_k(A,M)\to \Hom_A(I/I^2,M)$$
is exact for every $A$-module $M$.
\end{proof}

\begin{lemma}\label{lem2}
The module of special differentials $\Omega'_{R/k}$ exists, and it is a free $R$-module
with basis $d'_{R/k}(x_1),\ldots,d'_{R/k}(x_n)$.
\end{lemma}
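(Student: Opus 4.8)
The plan is to construct $\Omega'_{R/k}$ explicitly as the free $R$-module $F = R\,dx_1 \oplus \cdots \oplus R\,dx_n$, together with the map $d'_{R/k}\colon R \to F$ defined by $d'_{R/k}(f) = \sum_{i=1}^n \frac{\partial f}{\partial x_i}\,dx_i$, where $\frac{\partial}{\partial x_i}$ denotes the formal partial derivative on the power series ring $R$. First I would check that $d'_{R/k}$ is a well-defined $k$-linear map (this is clear, since formal partials are defined coefficientwise) and that it is a $k$-derivation: the Leibniz rule $\frac{\partial(fg)}{\partial x_i} = f\frac{\partial g}{\partial x_i} + g\frac{\partial f}{\partial x_i}$ holds formally in $R$, and hence $d'_{R/k}(fg) = f\,d'_{R/k}(g) + g\,d'_{R/k}(f)$. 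By its very definition, $d'_{R/k}$ satisfies the special-derivation identity $d'_{R/k}(f) = \sum_i \frac{\partial f}{\partial x_i}\,d'_{R/k}(x_i)$, since $d'_{R/k}(x_i) = dx_i$.

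Next I would verify the universal property: for any $R$-module $M$, composition with $d'_{R/k}$ gives a map $\Hom_R(F,M) \to \mathrm{Der}'_k(R,M)$, $\varphi \mapsto \varphi \circ d'_{R/k}$, and I must show this is an isomorphism. Injectivity is immediate: if $\varphi \circ d'_{R/k} = 0$ then $\varphi(dx_i) = \varphi(d'_{R/k}(x_i)) = 0$ for all $i$, so $\varphi = 0$ since the $dx_i$ form a basis of $F$. For surjectivity, given a special $k$-derivation $D\colon R \to M$, define $\varphi\colon F \to M$ to be the unique $R$-linear map with $\varphi(dx_i) = D(x_i)$; then for any $f \in R$ we have $\varphi(d'_{R/k}(f)) = \sum_i \frac{\partial f}{\partial x_i}\,D(x_i) = D(f)$, the last equality being exactly the hypothesis that $D$ is \emph{special}. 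Thus $\varphi \circ d'_{R/k} = D$, and $(F, d'_{R/k})$ satisfies the defining property, so $\Omega'_{R/k} \cong F$ is free with basis $d'_{R/k}(x_1), \ldots, d'_{R/k}(x_n)$.

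The one point that requires a little care — and which I expect to be the main (modest) obstacle — is confirming that the formal partial derivatives genuinely satisfy the Leibniz rule and the chain-rule-type identity on all of $R = k\llbracket x_1,\ldots,x_n\rrbracket$, not merely on the polynomial subring. This is a convergence/continuity matter in the $\frm$-adic topology: each $\frac{\partial}{\partial x_i}$ is $\frm$-adically continuous, the identities hold on polynomials, and polynomials are dense in $R$, so the identities extend by continuity. Once this is in place the rest is formal. Note also that it is precisely the restriction to \emph{special} derivations that makes the argument work: an arbitrary $k$-derivation of $R$ into a non-separated module need not be determined by its values on $x_1,\ldots,x_n$, which is why the naive module of Kähler differentials fails to be coherent and why the definition of $\mathrm{Der}'_k$ was imposed.
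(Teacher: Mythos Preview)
Your proof is correct and follows essentially the same approach as the paper: both arguments rest on the observation that, by the very definition of special derivation, any $D \in \mathrm{Der}'_k(R,M)$ is uniquely determined by the values $D(x_1),\ldots,D(x_n)$, and that these values may be prescribed arbitrarily. The paper compresses this into a single sentence, whereas you spell out the explicit construction of the universal pair $(F,d'_{R/k})$ and the verification of the universal property; your added care about the Leibniz rule for formal partials is fine (and can in fact be checked directly on coefficients rather than via continuity).
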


\begin{proof}
The assertion follows from the fact that by definition, every $D\in {\rm Der}'_k(R,M)$ is uniquely determined by
the $D(x_i)$, which can be chosen arbitrarily.
\end{proof}

\begin{lemma}\label{lem2_part2}
Let $S$ be an $R$-algebra, and $A=S[y_i\mid i\in I]$ a polynomial ring
over $S$. If $\Omega'_{S/k}$ exists, then $\Omega'_{A/k}$ exists and it is isomorphic
to the direct sum of $\Omega'_{S/k}\otimes_SA$ and a free $A$-module with basis
$\{d'_{A/k}(y_i)\mid i\in I\}$.
\end{lemma}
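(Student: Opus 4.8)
The plan is to exhibit an explicit candidate for $\Omega'_{A/k}$ and verify the defining universal property by hand, imitating the classical description of the K\"ahler differentials of a polynomial ring (cf.\ Lemma~\ref{lem2}). Set
\[
M_0 \;:=\; (\Omega'_{S/k}\otimes_S A)\;\oplus\;\bigoplus_{i\in I} A\,e_i ,
\]
an $A$-module (the first summand via its second factor, the second a free $A$-module on symbols $e_i$), and define a $k$-linear map $d\colon A\to M_0$ as follows: since $A=S[y_i\mid i\in I]$ is a polynomial $S$-algebra, there is a unique $k$-derivation $d$ with $d(s)=d'_{S/k}(s)\otimes 1$ for $s\in S$ and $d(y_i)=e_i$ for $i\in I$ (each element of $A$ involves only finitely many $y_i$, so this is well defined even for infinite $I$). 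The only point that is not immediate is that $d$ is a \emph{special} $k$-derivation, i.e.\ that $d(f)=\sum_{j=1}^n\frac{\partial f}{\partial x_j}d(x_j)$ for $f\in R$; but $d|_R$ is obtained from $d'_{S/k}|_R$ by applying $-\otimes_S A$, and $d'_{S/k}|_R$ is special by hypothesis, so this identity is inherited. Hence $d\in{\rm Der}'_k(A,M_0)$, and by the same computation $\phi\circ d\in{\rm Der}'_k(A,M)$ for every $A$-module $M$ and every $A$-linear $\phi\colon M_0\to M$, so that $\phi\mapsto\phi\circ d$ is a well-defined map ${\rm Hom}_A(M_0,M)\to{\rm Der}'_k(A,M)$.

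Next I would check this map is bijective for every $M$. For surjectivity, take $D\in{\rm Der}'_k(A,M)$; viewing $M$ as an $S$-module, the restriction $D|_S$ is a $k$-derivation $S\to M$ whose further restriction to $R$ equals $D|_R$, hence is special, so $D|_S\in{\rm Der}'_k(S,M)$, and the universal property of $\Omega'_{S/k}$ yields a unique $S$-linear $\psi\colon\Omega'_{S/k}\to M$ with $\psi\circ d'_{S/k}=D|_S$. Let $\phi\colon M_0\to M$ be the $A$-linear map given by $\omega\otimes a\mapsto a\,\psi(\omega)$ on the first summand and $e_i\mapsto D(y_i)$ on the second. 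Then $\phi\circ d$ and $D$ are $k$-derivations $A\to M$ agreeing on $S$ (by the choice of $\psi$) and on each $y_i$; since $S$ and the $y_i$ generate $A$ as a $k$-algebra, $\phi\circ d=D$. For injectivity, note $M_0$ is generated as an $A$-module by the image of $d$: it contains all $e_i$ and all $d'_{S/k}(s)\otimes 1$, and the latter generate $\Omega'_{S/k}\otimes_S A$ over $A$ because $d'_{S/k}(S)$ generates $\Omega'_{S/k}$ over $S$ (a standard consequence of the uniqueness clause in the universal property of $\Omega'_{S/k}$). Thus any $A$-linear $\phi$ is determined by $\phi\circ d$. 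This shows $\Omega'_{A/k}=M_0$ with $d'_{A/k}=d$, and in particular $\{d'_{A/k}(y_i)\}_{i\in I}$ is a basis of the free summand, as asserted.

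I expect the only delicate aspect to be bookkeeping rather than any substantial difficulty: throughout, ``special'' is always measured relative to the fixed ring $R$, so at each stage one must track that the derivations in play still restrict to special derivations on $R$ along the chain $R\hookrightarrow S\hookrightarrow A$ — and this is exactly where the hypothesis that $\Omega'_{S/k}$ exists (with its special universal derivation) is used. Once that is kept straight, the proof is a word-for-word adaptation of the classical computation for ordinary differentials.
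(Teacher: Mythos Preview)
Your proof is correct and is essentially a detailed unpacking of the paper's one-line argument, which simply notes that any $D\in{\rm Der}'_k(A,M)$ is uniquely determined by $D\vert_S\in{\rm Der}'_k(S,M)$ and by the arbitrary values $D(y_i)\in M$. Your explicit construction of $M_0$ and verification of the universal property make rigorous exactly this observation; the paper leaves these routine checks to the reader.
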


\begin{proof}
The assertion follows from the fact that every $D\in {\rm Der}'_k(A,M)$ is uniquely determined by
$D\vert_S$ and by the $D(y_i)$, which can be chosen arbitrarily. 
\end{proof} 

By combining the above three lemmas we deduce the following existence result.

\begin{corollary}\label{cor1}
For every $R$-algebra $A$, the module $\Omega'_{A/k}$ exists. Furthermore, if $A$
is of finite type over $R$, then $\Omega'_{A/k}$ is finitely generated over $A$.
\end{corollary}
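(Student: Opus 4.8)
The plan is to write any $R$-algebra $A$ as a quotient of a polynomial ring over $R$ and then feed this presentation through the three preceding lemmas in order. First I would choose a family of generators $(a_i)_{i\in I}$ of $A$ as an $R$-algebra, giving a surjection of $R$-algebras $A' := R[y_i \mid i\in I]\twoheadrightarrow A$ sending $y_i\mapsto a_i$; its kernel is some ideal $J$, so $A = A'/J$.

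Next, Lemma~\ref{lem2} gives that $\Omega'_{R/k}$ exists (and is free on $d'_{R/k}(x_1),\ldots,d'_{R/k}(x_n)$). Applying Lemma~\ref{lem2_part2} with $S = R$ then shows that $\Omega'_{A'/k}$ exists. Finally, applying Lemma~\ref{lem1} to the surjection $A'\twoheadrightarrow A = A'/J$ shows that $\Omega'_{A/k}$ exists, and moreover fits in the exact sequence
$$J/J^2 \overset{\delta}\to \Omega'_{A'/k}\otimes_{A'}A \overset{u}\to \Omega'_{A/k}\to 0.$$

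For the finiteness statement, I would note that when $A$ is of finite type over $R$ one may take $I = \{1,\ldots,m\}$ finite. Then Lemma~\ref{lem2_part2} (together with Lemma~\ref{lem2}) gives that $\Omega'_{A'/k}$ is a free $A'$-module of finite rank (namely $n+m$), hence $\Omega'_{A'/k}\otimes_{A'}A$ is a finitely generated $A$-module; by the exactness of $u$ above, $\Omega'_{A/k}$ is a quotient of it and is therefore finitely generated over $A$.

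I do not anticipate a real obstacle here: the only point deserving a word of care is that for a general $R$-algebra the index set $I$ may be infinite, but Lemma~\ref{lem2_part2} is stated for an arbitrary index set, so the existence half goes through unchanged; the finiteness half is exactly where one uses that $I$ can be chosen finite. One should also double-check that in the exact sequence the middle term is genuinely finitely generated, which is immediate from $\Omega'_{A'/k}$ being free of rank $n+m$.
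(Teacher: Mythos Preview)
Your proposal is correct and follows exactly the approach the paper indicates: the corollary is stated as an immediate consequence of combining Lemmas~\ref{lem1}, \ref{lem2}, and \ref{lem2_part2}, and you have spelled out precisely this combination. The finiteness argument via a finite presentation is the intended one.
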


\begin{remark}\label{comp_usual_diff}
Since ${\rm Der}'_k(A,M)\subseteq {\rm Der}_k(A,M)$ for every $A$-module $M$, it follows that we have a surjective morphism
$\Omega_{A/k}\to \Omega'_{A/k}$. In particular, 
$\Omega'_{A/k}$ is generated as an $A$-module by $\{d'_{A/k}(a)\mid
a\in A\}$.
\end{remark}

\begin{lemma}\label{lem3}
If $\phi\colon A\to B$ is a morphism of $R$-algebras, then we have an exact
sequence 
$$\Omega'_{A/k}\otimes_AB\overset{u}\to \Omega'_{B/k}
\overset{v}\to\Omega_{B/A}\to 0,$$
where $u(d'_{A/k}(f)\otimes 1)=d'_{B/k}(f)$ and $v(d'_{B/k}(f))=d_{B/A}(f)$.
\end{lemma}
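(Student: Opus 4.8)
The plan is to follow the proof of Lemma~\ref{lem1}: first define $u$ and $v$ using the universal properties of the modules of special differentials, and then deduce the exactness of the sequence from the left-exactness of the associated sequences of $\Hom$-modules.

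All three modules exist: $\Omega'_{A/k}$ and $\Omega'_{B/k}$ by Corollary~\ref{cor1}, and $\Omega_{B/A}$ is the usual module of K\"ahler differentials of $B$ over $A$. To construct $v$, I would note that the ordinary $A$-derivation $d_{B/A}\colon B\to\Omega_{B/A}$ is in particular a $k$-derivation, and it kills the image of $A$, hence the image of $R$, so its restriction to $R$ is zero and therefore special; thus $d_{B/A}$ is a special $k$-derivation and factors uniquely through $d'_{B/k}$, giving the $B$-linear map $v$ with $v(d'_{B/k}(f))=d_{B/A}(f)$. To construct $u$, I would observe that $a\mapsto d'_{B/k}(\phi(a))$ is a special $k$-derivation $A\to\Omega'_{B/k}$: it is a $k$-derivation through $\phi$, and its restriction to $R$ coincides with the restriction of the special derivation $d'_{B/k}$ to the image of $R$ in $B$; hence it factors through $d'_{A/k}$, and extending scalars to $B$ yields the $B$-linear map $u$ with $u(d'_{A/k}(f)\otimes 1)=d'_{B/k}(\phi(f))$.

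Next, applying $\Hom_B(-,M)$ for an arbitrary $B$-module $M$ and using the universal properties together with the adjunction $\Hom_B(\Omega'_{A/k}\otimes_AB,M)\cong\Hom_A(\Omega'_{A/k},M)$, the dualized sequence is identified with
$$0\to{\rm Der}_A(B,M)\overset{v^*}\to{\rm Der}'_k(B,M)\overset{u^*}\to{\rm Der}'_k(A,M),$$
where $v^*$ is the inclusion (an $A$-derivation of $B$ kills $\phi(A)$, hence the image of $R$, so it is special) and $u^*$ is $D\mapsto D\circ\phi$ (which is special, having the same restriction to $R$ as $D$). This sequence is exact: $v^*$ is injective since ${\rm Der}_A(B,M)$ sits inside ${\rm Der}'_k(B,M)$; and $D\circ\phi=0$ forces $D(\phi(a)b)=\phi(a)D(b)$ for all $a\in A$ and $b\in B$, i.e. $D\in{\rm Der}_A(B,M)$, while conversely every $A$-derivation of $B$ is annihilated by precomposition with $\phi$.

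To finish, I would invoke the standard fact that a sequence of modules $P\overset{u}\to Q\overset{v}\to S\to 0$ is exact if and only if $0\to\Hom(S,M)\overset{v^*}\to\Hom(Q,M)\overset{u^*}\to\Hom(P,M)$ is exact for every $M$; combined with the previous paragraph, this yields the exactness of $\Omega'_{A/k}\otimes_AB\overset{u}\to\Omega'_{B/k}\overset{v}\to\Omega_{B/A}\to0$, and tracing the identifications confirms that $u$ and $v$ are given on generators by the stated formulas. There is no serious obstacle here; the only points that need attention are the well-definedness of $u$ and $v$ and the verification that the relevant derivations are special, both of which reduce to the observation that a derivation factoring through $\phi$, or an $A$-derivation of $B$, has its restriction to $R$ controlled (indeed zero, in the latter case).
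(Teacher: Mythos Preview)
Your proof is correct and follows exactly the same approach as the paper, which simply asserts that the result follows from the exactness of the dual sequence $0\to{\rm Der}_A(B,M)\to{\rm Der}'_k(B,M)\to{\rm Der}'_k(A,M)$ for every $B$-module $M$ (noting, as you do, that an $A$-derivation is trivially special since its restriction to $R$ vanishes). You have filled in the details of the construction of $u$ and $v$ and the verification of exactness that the paper leaves implicit.
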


\begin{proof}
The assertion follows as in the case of usual derivations from the fact that
for every $B$-module $M$, the corresponding sequence
$$0\to {\rm Der}_A(B,M)\to {\rm Der}'_k(B,M)\to {\rm Der}'_k(A,M)$$
is exact (note that if $D\in {\rm Der}_A(B,M)$, then $D$ is trivially a special $k$-derivation,
since its restriction to $R$ is zero). 
\end{proof}

\begin{lemma}\label{lem4}
Let $A$ be an $R$-algebra.
If $S$ is a multiplicative system in $A$, then we have a canonical isomorphism
$S^{-1}\Omega'_{A/k}\simeq\Omega'_{S^{-1}A/k}$.
\end{lemma}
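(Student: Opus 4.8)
The plan is to deduce the isomorphism from the universal properties of the modules of special differentials, rather than from a direct manipulation of localizations of $\Hom$-modules. Both $\Omega'_{A/k}$ and $\Omega'_{S^{-1}A/k}$ exist by Corollary~\ref{cor1}. The first step is the following observation: for every $S^{-1}A$-module $M$ (equivalently, every $A$-module on which each element of $S$ acts invertibly), restriction along the localization map $A\to S^{-1}A$ induces a bijection ${\rm Der}'_k(S^{-1}A,M)\to {\rm Der}'_k(A,M)$. Indeed, given $D\in{\rm Der}_k(A,M)$, the quotient rule $D(a/s)=s^{-1}D(a)-s^{-2}aD(s)$ defines a $k$-derivation $S^{-1}A\to M$ extending $D$, and this extension is unique because $s\cdot D(a/s)=D(a)$ determines its value on $a/s$ once $D$ is known on $A$. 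Moreover, if $D$ is special, so is its extension: the restriction of the extension to $R$ (via $R\to A\to S^{-1}A$) coincides with the restriction of $D$ to $R$, which by hypothesis satisfies $D(f)=\sum_i\frac{\partial f}{\partial x_i}D(x_i)$.

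Second, I would combine this with the universal properties. For an $S^{-1}A$-module $M$ one gets a chain of isomorphisms
\begin{align*}
\Hom_{S^{-1}A}(\Omega'_{S^{-1}A/k},M) &\cong {\rm Der}'_k(S^{-1}A,M) \cong {\rm Der}'_k(A,M)\\
&\cong \Hom_A(\Omega'_{A/k},M) \cong \Hom_{S^{-1}A}(S^{-1}\Omega'_{A/k},M),
\end{align*}
where the first and third isomorphisms are the defining universal properties of $\Omega'_{S^{-1}A/k}$ and $\Omega'_{A/k}$, the second is the bijection constructed above, and the last is the standard adjunction $\Hom_A(N,M)\cong\Hom_{S^{-1}A}(S^{-1}N,M)$, valid since $M$ is already an $S^{-1}A$-module. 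All of these are functorial in $M$, so by Yoneda's lemma they are induced by a unique isomorphism $S^{-1}\Omega'_{A/k}\to\Omega'_{S^{-1}A/k}$ of $S^{-1}A$-modules; following the identity map through the chain shows it carries $d'_{A/k}(a)/s$ to $s^{-1}d'_{S^{-1}A/k}(a)$, which is the asserted canonical map.

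The only genuinely delicate point is the verification that the quotient-rule extension of a special $k$-derivation is again special, and this is precisely where it matters that ``special'' is tested only against the fixed subring $R$, whose image in $S^{-1}A$ lies in $A$; the condition is therefore insensitive to inverting elements of $S$. Everything else is a formal consequence of the universal properties, together with Remark~\ref{comp_usual_diff}, which guarantees that $\Omega'_{A/k}$ is generated by the elements $d'_{A/k}(a)$ and so makes the explicit description of the isomorphism unambiguous. Alternatively, one can bypass Yoneda and construct the two maps directly: the localization map $S^{-1}\Omega'_{A/k}\to\Omega'_{S^{-1}A/k}$ coming from functoriality (as in Lemma~\ref{lem3}), and its inverse obtained by applying the universal property of $\Omega'_{S^{-1}A/k}$ to the special $k$-derivation $a/s\mapsto \tfrac{1}{s}\,d'_{A/k}(a)-\tfrac{a}{s^2}\,d'_{A/k}(s)$ valued in $S^{-1}\Omega'_{A/k}$, and then checking on the generators $d'(a/s)$ and $d'_{A/k}(a)/s$ that the two composites are the identity.
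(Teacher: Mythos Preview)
Your proof is correct and follows essentially the same route as the paper: both arguments establish the chain of natural isomorphisms
\[
\Hom_{S^{-1}A}(S^{-1}\Omega'_{A/k},M)\simeq \Hom_A(\Omega'_{A/k},M)\simeq {\rm Der}'_k(A,M)\simeq {\rm Der}'_k(S^{-1}A,M)
\]
for $S^{-1}A$-modules $M$, with the key step being that a $k$-derivation $A\to M$ extends uniquely to $S^{-1}A$ and that the extension is special if and only if the original derivation is. You spell out the quotient-rule formula and the Yoneda step more explicitly than the paper does, but the underlying argument is the same.
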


\begin{proof}
It is enough to note that for every $S^{-1}A$-module $M$, we have canonical isomorphisms
\begin{equation}\label{eq_lem4}
\Hom_{S^{-1}A}(S^{-1}\Omega'_{A/k},M)\simeq \Hom_A(\Omega'_{A/k},M)
\simeq {\rm Der}'_k(A,M)\simeq {\rm Der}'_k(S^{-1}A,M).
\end{equation}
The last isomorphism follows from the fact that every $k$-derivation $D\colon A\to M$
admits a unique extension $\widetilde{D}\colon S^{-1}A\to M$, and it is clear that $D$
is special if and only if $\widetilde{D}$ is. The assertion in the lemma follows from (\ref{eq_lem4}).
\end{proof}

The case when $A$ is regular will play an important role. We show that in this case
$\Omega'_{A/k}$ is locally free. In fact, we have the following more precise result.

\begin{proposition}\label{prop0}
If $A$ is an algebra of finite type over $R$, and if $\frq\in {\rm Spec}(A)$ is such that
$A_{\frq}$ is a regular ring of dimension $r$, then $\Omega'_{A_{\frq}/k}$ is a free
$A_{\frq}$-module, of rank equal to $r+\dim_{k(\frq)}(\Omega'_{k(\frq)/k})$, where
$k(\frq)=A_{\frq}/\frq A_{\frq}$. Furthermore,
if $u_1,\ldots,u_r\in A$ induce a regular system of parameters
in $A_{\frq}$, then the images of $d'_{A/k}(u_1),\ldots,d'_{A/k}(u_r)$ in $\Omega'_{A_{\frq}/k}$
are part of a basis.
\end{proposition}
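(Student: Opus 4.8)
The plan is to localize at $\frq$ and combine three ingredients: the conormal sequence of Lemma~\ref{lem1} for the surjection $A_\frq\to k(\frq)$, a Jacobian criterion guaranteeing that its connecting map is injective, and a presentation of $\Omega'_{A_\frq/k}$ obtained by writing $A$ as a quotient of a polynomial ring over $R$.

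Write $B=A_\frq$, let $\frm$ be its maximal ideal and $K=k(\frq)=B/\frm$. By Lemma~\ref{lem4} we have $\Omega'_{B/k}\simeq(\Omega'_{A/k})_\frq$, which is finitely generated over $B$ by Corollary~\ref{cor1}; moreover the localization morphism carries $d'_{A/k}(a)$ to $d'_{B/k}(a)$, so it suffices to prove the statement for $B$, with $d'_{B/k}(u_i)$ in place of the images of $d'_{A/k}(u_i)$. Note $B$ is regular local of dimension $r$ and equicharacteristic of residue characteristic zero. Applying Lemma~\ref{lem1} to $B\to K$ gives an exact sequence
$$\frm/\frm^2\xrightarrow{\ \delta\ }\Omega'_{B/k}\otimes_B K\longrightarrow\Omega'_{K/k}\longrightarrow 0,$$
and since $\dim_K(\frm/\frm^2)=r$ this yields $\dim_K(\Omega'_{B/k}\otimes_B K)\le\rho$, where $\rho:=r+\dim_K\Omega'_{K/k}$; by Nakayama $\Omega'_{B/k}$ is generated by at most $\rho$ elements, in fact by $d'_{B/k}(u_1),\dots,d'_{B/k}(u_r)$ together with lifts of a $K$-basis of $\Omega'_{K/k}$.

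The heart of the matter, which I expect to be the main obstacle, is the injectivity of $\delta$: equivalently, that for a regular system of parameters $g_1,\dots,g_r$ of a regular local $R$-algebra $B$ essentially of finite type, $d'_{B/k}(g_1),\dots,d'_{B/k}(g_r)$ are $K$-linearly independent in $\Omega'_{B/k}\otimes_B K$. This is a Jacobian-type criterion, and it is where $\mathrm{char}\,k=0$ is used. Using the presentation below it reduces to the case $B=\widetilde P=P_\frp$, where $P=R[y_1,\dots,y_m]$ and $\frp\in\Spec P$; there $\Omega'_{\widetilde P/k}$ is free of rank $n+m$ by Lemmas~\ref{lem2}, \ref{lem2_part2} and \ref{lem4}, and the claim becomes injectivity of $\frp\widetilde P/\frp^2\widetilde P\to k(\frp)^{\,n+m}$, $f\mapsto(\overline{\partial f/\partial x_l},\overline{\partial f/\partial y_j})$. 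Since $P$ is flat over $R$, a regular system of parameters of $\widetilde P$ may be chosen as the union of one for $R_{\frp\cap R}$ and lifts of one for the (regular) fiber, which makes the relevant Jacobian matrix block triangular: the $y$-block is handled by the classical Jacobian criterion over the field $k(\frp\cap R)$ (valid in characteristic zero), and the $x$-block reduces to the assertion that for every prime $\frn$ of $R$ the map $\frn R_\frn/\frn^2R_\frn\to\Omega'_{R_\frn/k}\otimes k(\frn)=k(\frn)^n$ is injective, i.e. that the formal Jacobian of a minimal generating set of $\frn R_\frn$ has maximal rank modulo $\frn$. I would prove this last statement by passing to the completion $\widehat{R_\frn}$, writing it via Cohen's structure theorem as $k(\frn)[[z_1,\dots,z_d]]$ with the $z_i$ a regular system of parameters, and comparing first-order Taylor expansions along $R\hookrightarrow R_\frn\hookrightarrow\widehat{R_\frn}$; the characteristic-zero input is exactly the Euler relation (a nonzero form of positive degree has a nonvanishing partial), and this is also the $m=1$ case of the Zariski--Nagata theorem for the excellent regular ring $R$. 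The delicacy is that $R\to\widehat{R_\frn}$ need not be a local homomorphism, so one must check that the derivations produced restrict to \emph{special} derivations of $R$.

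Granting the injectivity of $\delta$, the conormal sequence above is short exact, so $\dim_K(\Omega'_{B/k}\otimes_B K)=\rho$ and $d'_{B/k}(u_1),\dots,d'_{B/k}(u_r)$ reduce to $K$-linearly independent elements there. For freeness, write $A=R[y_1,\dots,y_m]/I$, put $P=R[y_1,\dots,y_m]$ (a regular ring) and $\widetilde P=P_{\frq'}$ with $\frq'$ the contraction of $\frq$, so $B=\widetilde P/J$ with $J=I\widetilde P$; since $\widetilde P$ and $B$ are regular local, $J$ is generated by $c:=\dim\widetilde P-r$ elements $t_1,\dots,t_c$ forming part of a regular system of parameters of $\widetilde P$, hence $J/J^2$ is free over $B$ of rank $c$. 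Lemma~\ref{lem1} then gives a presentation $B^c\xrightarrow{\ \partial\ }B^{\,n+m}\to\Omega'_{B/k}\to 0$ (using that $\Omega'_{\widetilde P/k}$ is free of rank $n+m$). Applying the injectivity of the connecting map to $\widetilde P$ itself gives $n+m=\dim\widetilde P+\dim_K\Omega'_{K/k}$, so $(n+m)-c=\rho$; and the image of $\partial\otimes_B K$ in $K^{n+m}=\Omega'_{\widetilde P/k}\otimes_{\widetilde P}K$ is the span of the classes $d'_{\widetilde P/k}(t_i)\otimes 1$, which is $c$-dimensional by that same injectivity, so $\partial\otimes_B K$ is injective. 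Therefore $\partial$ is a split injection of free $B$-modules and $\Omega'_{B/k}=\operatorname{coker}\partial$ is free of rank $(n+m)-c=\rho$. Finally, being free and reducing to linearly independent elements modulo $\frm$, the $d'_{B/k}(u_i)$ extend to a basis of $\Omega'_{B/k}$; transporting back through $\Omega'_{A/k}\to\Omega'_{A_\frq/k}$ gives the last assertion.
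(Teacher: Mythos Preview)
Your argument is correct and follows essentially the same architecture as the paper's: present $A$ as a quotient of $S=R[y_1,\dots,y_m]$, use the conormal sequence of Lemma~\ref{lem1} together with the freeness of $\Omega'_{S/k}$, and reduce everything to a Jacobian criterion --- namely, that the differentials of a regular system of parameters of the regular local ring $S_Q$ are $k(Q)$-linearly independent in $\Omega'_{S/k}\otimes k(Q)$.

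The only substantive difference is how this Jacobian criterion is supplied. The paper invokes Matsumura's results on the \emph{strong Jacobian condition}: $S=R[y_1,\dots,y_m]$ satisfies it by \cite[Theorem~6]{Matsumura2}, and then \cite[Theorem~5]{Matsumura2} gives exactly the derivations $D_1,\dots,D_{r+s}$ and elements $w_1,\dots,w_{r+s}\in Q$ with $\det(D_i(w_j))\notin QS_Q$ --- which is dual to the linear independence you need. You instead decompose the problem via the block-triangular Jacobian (splitting along $R\to R[y]$): the $y$-block is the classical Jacobian criterion for a polynomial ring over the field $k(\frn)$, and the $x$-block is the case of $R$ itself, which you source from the Zariski--Nagata theorem for $R=k\llbracket x_1,\dots,x_n\rrbracket$ (you want the $\frn^{(2)}$ case, not ``$m=1$''). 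Both routes are valid; yours is arguably more self-contained since Zariski--Nagata for power series rings in characteristic zero is more widely known than Matsumura's Jacobian-condition papers. Your alternative sketch via Cohen's structure theorem on $\widehat{R_\frn}$ is, as you suspect, delicate --- relating $\partial/\partial x_i$ to the coordinate derivations $\partial/\partial z_j$ on $k(\frn)\llbracket z\rrbracket$ is essentially circular without already knowing the Jacobian has full rank --- but since Zariski--Nagata does the job cleanly, this does not affect the proof.
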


\begin{proof}
Our argument is based on the results in \cite{Matsumura2}.
Since the regular locus of $A$ is open, we may replace $A$ by a localization $A_f$
in order to assume that $A$ is a regular ring, and further, that it is a domain. Let us choose
an isomorphism $A\simeq S/P$, with $S=R[y_1,\ldots,y_N]$ and $P\in \Spec(S)$. Let $Q\in {\rm Spec}(S)$
be such that $\frq=Q/P$. We put $s={\rm codim}(P)$, so that ${\rm codim}(Q)=r+s$. 

A key property of $S$ is that it satisfies the \emph{strong Jacobian
condition over} $k$. At the prime $Q$, this means that there are $D_1,\ldots,D_{r+s}
\in {\rm Der}_k(S)\cdot S_Q$, and $w_1,\ldots,w_{r+s}\in Q$, such that 
${\rm det}(D_i(w_j))\not\in QS_Q$ and $[D_i,D_j]\in \sum_{\ell=1}^{r+s} S_Q\cdot D_{\ell}$. 
The fact that $S$ satisfies this condition follows from \cite[Theorem~6]{Matsumura2}, 
which says that
rings with the strong Jacobian condition are closed under taking polynomial or formal power series rings (the statement therein is in terms of absolute derivations, but the proof goes through if one works with $k$-derivations). In this case \cite[Theorem~5]{Matsumura2} implies that 
$S_Q/PS_Q$ is regular if and only if there are $w'_1,\ldots,w'_s\in P$ such that some
$s$-minor of the matrix $(D_i(w'_j))_{i,j}$ does not lie in $QS_Q$.

Lemma~\ref{lem1} gives (after localizing and using Lemma~\ref{lem4}) an exact sequence of $A_{\frq}$-modules
$$PS_Q/P^2S_Q\overset{\phi}\to \Omega'_{S/k}\otimes_SA_{\frq}\to\Omega'_{A_{\frq}/k}\to 0.$$
Since $S_Q/PS_Q$ is regular, $PS_Q/P^2S_Q$ is a free $A_{\frq}$-module of rank $s$.
Note that since $S$ is separated with respect to the $(x_1,\ldots,x_n)$-adic topology, we have
${\rm Der}'_k(S)={\rm Der}_k(S)$. Therefore $D_1,\ldots,D_{r+s}$ define an 
$A_{\frq}$-linear map  $\psi\colon \Omega'_{S/k}\otimes_SA_{\frq}\to A_{\frq}^{r+s}$ such that
$\psi\circ\phi$ is split injective. It follows from the above exact sequence that 
$\Omega'_{A_{\frq}/k}$ is a free $A_{\frq}$-module of rank $n+N-s$, and we also see that
$w'_1,\ldots,w'_s$ generate $PS_Q$. 
Running the same argument with $P$ replaced by $Q$, we see that 
$\dim_{k(\frq)}(\Omega'_{k(\frq)/k})=n+N-(r+s)$, which gives the assertion about 
the rank of $\Omega'_{A_{\frq}/k}$.

For the last assertion in the proposition, note that
if the $\widetilde{u}_i\in S$  are lifts of the $u_i$, then it follows that $\widetilde{u}_1,\ldots,
\widetilde{u}_r,
w'_1,\ldots,w'_s$ give a minimal system of generators of $QS_Q$. By writing $w_1,\ldots,w_{r+s}$
in terms of this system of generators, we see that 
$$d'_{S/k}(\widetilde{u}_1)\otimes 1,\ldots,
d'_{S/k}(\widetilde{u}_r)\otimes 1,d'_{S/k}(w'_1)\otimes 1,\ldots,d'_{S/k}(w'_s)\otimes 1$$ are part of a basis
of $\Omega'_{S/k}\otimes_S A_{\frq}$. We deduce from this the last assertion in the proposition.
\end{proof}

For future reference, we include here the following lemma, describing the behavior
of the module of special differentials with respect to field extensions.

\begin{lemma}\label{lem2_6}
Let $K\hookrightarrow L$ be a field extension, where $K$ is an $R$-algebra.
In this case we have an exact sequence
$$0\to \Omega'_{K/k}\otimes_KL\to\Omega'_{L/k}\to\Omega_{L/K}\to 0.$$
\end{lemma}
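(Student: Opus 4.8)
The plan is to deduce Lemma~\ref{lem2_6} from the conormal/cotangent exact sequence for special differentials established above, namely Lemma~\ref{lem3}, plus a presentation of $L$ over $K$ as a localization of a polynomial ring modulo an ideal. More precisely, write $K \to A \to L$, where $A = K[y_i \mid i \in I]$ is a (possibly infinite) polynomial ring surjecting onto a localization of itself that has $L$ as a further quotient; since any field extension is a filtered colimit of finitely generated ones, and a finitely generated field extension is the fraction field of a quotient of a polynomial ring, one can realize $L$ as $S^{-1}A / \mathfrak{q}$ for a suitable prime $\mathfrak{q}$ and multiplicative system $S$. Then Lemma~\ref{lem3} gives the right-exact sequence
$$\Omega'_{K/k}\otimes_K L \overset{u}\to \Omega'_{L/k} \overset{v}\to \Omega_{L/K}\to 0,$$
using Lemma~\ref{lem4} to pass through the localization. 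So the whole content of the lemma is the \emph{left-exactness}, i.e.\ the injectivity of $u$.

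The key step is therefore to prove that $u\colon \Omega'_{K/k}\otimes_K L \to \Omega'_{L/k}$ is injective. The standard trick for ordinary differentials in characteristic zero (where the analogous sequence is also left-exact and even split) is to produce a retraction: one shows that every $k$-derivation $D\colon K \to V$ into an $L$-vector space $V$ extends to a $k$-derivation $L \to V$. For special derivations one must check that this extension can be taken to be special, i.e.\ to satisfy $D(f) = \sum_i \frac{\partial f}{\partial x_i} D(x_i)$ on $R$. But this condition only constrains the restriction of $D$ to $R \subseteq K$, and that restriction is unchanged by extending $D$ from $K$ to $L$; hence if $D|_K$ is special, any extension to $L$ is automatically special. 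Concretely, I would argue as follows: the natural map $\mathrm{Der}'_k(L, V) \to \mathrm{Der}'_k(K, V)$ is surjective for every $L$-vector space $V$, because in characteristic zero any $k$-derivation on a field $K$ with values in a $K$-vector space extends to the field extension $L$ (using that $L/K$ is separable, being characteristic zero — decompose into a separably generated transcendental part and an algebraic part, extending freely over a transcendence basis and then uniquely over the separable algebraic part), and the extension is special because being special is a condition on the restriction to $R$. Dualizing (applying $\Hom_L(-,V) = \Hom_L(- \otimes_K L, V)$ appropriately), surjectivity of $\mathrm{Der}'_k(L,V) \to \mathrm{Der}'_k(K,V)$ for all $V$ is equivalent to injectivity of $u$, and in fact yields that the sequence is split exact.

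I expect the main obstacle to be purely expository: making precise the extension statement "a $k$-derivation $K \to V$ extends to $L \to V$" in the generality of arbitrary field extensions (not finitely generated), which requires writing $L/K$ as a filtered union of finitely generated subextensions, extending step by step, and invoking Zorn's lemma, together with the observation that separability in characteristic zero makes each finite algebraic step go through uniquely. Once that is in place, the passage from the surjectivity of the derivation maps to the exactness of the sequence of modules of special differentials is formal, following verbatim the argument for ordinary differentials (e.g.\ as in the proof that $0 \to \Omega_{K/k}\otimes_K L \to \Omega_{L/k} \to \Omega_{L/K} \to 0$ is exact and split when $L/K$ is separable), with Lemma~\ref{lem3} supplying the right-exact three-term sequence and Lemma~\ref{lem4} handling localizations. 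No essentially new idea beyond what is already developed in this appendix is needed.
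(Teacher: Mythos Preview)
Your proposal is correct and follows essentially the same route as the paper: invoke Lemma~\ref{lem3} for right-exactness, then prove injectivity of $u$ by showing that ${\rm Der}'_k(L,M)\to{\rm Der}'_k(K,M)$ is surjective, which in turn follows from the classical surjectivity of ${\rm Der}_k(L,M)\to{\rm Der}_k(K,M)$ in characteristic zero together with the observation that a $k$-derivation on $L$ is special if and only if its restriction to $K$ is. The only superfluous step in your write-up is the presentation of $L$ as $S^{-1}A/\frq$: Lemma~\ref{lem3} already applies directly to the ring map $K\to L$, so no such presentation is needed to obtain the right-exact sequence.
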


\begin{proof}
By Lemma~\ref{lem3}, it is enough to show that the morphism
$\Omega'_{K/k}\otimes_KL\to\Omega'_{L/k}$ is injective;
equivalently, for every $L$-module $M$, the map
${\rm Der}'_k(L,M)\to {\rm Der}'_k(K,M)$ is surjective. This follows from the
fact that the map ${\rm Der}_k(L,M)\to {\rm Der}_k(K,M)$ is surjective
(recall that ${\rm char}(k)=0$), by noticing
that a $k$-derivation $D\colon L\to M$ is special if and only if $D\vert_K$ is special.
\end{proof}

We will need a comparison between the usual module of differentials for schemes of finite type over a field, and the module of special differentials for the completion at a closed point. 
We consider the following
setting. Suppose that $\phi\colon A\to B$ is a morphism of finitely generated $k$-algebras,
and $\frm$ is a maximal ideal in $A$. The field $K=A/\frm$ is a finite extension of $k$.
We put $A'=\widehat{A_{\frm}}$ and $B'=B\otimes_A\widehat{A_{\frm}}$.
By Cohen's Structure Theorem, we can find a surjective 
 local morphism of $k$-algebras
$\psi\colon S=K\llbracket x_1,\ldots,x_N\rrbracket \to A'$. In this case we take 
$R=k\llbracket x_1,\ldots,x_N\rrbracket$ and $\frm_R=(x_1,\ldots,x_N)$. Note that 
$A'$
becomes naturally an $R$-algebra via the inclusion $R\hookrightarrow S$.
We use this structure when considering the modules of special derivations
$\Omega'_{A'/k}$ and $\Omega'_{B'/k}$
Since $K/k$ is finite, $A'$ is finite as an $R$-algebra, hence $B'$ is a finitely generated 
$R$-algebra.

\begin{proposition}\label{prop1}
With the above notation, we have a canonical isomorphism
\begin{equation}\label{eq0_prop1}
\Omega_{B/k}\otimes_BB'\simeq\Omega'_{B'/k}.
\end{equation}
\end{proposition}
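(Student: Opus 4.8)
The plan is to build the isomorphism \eqref{eq0_prop1} by comparing two presentations, one for the usual module of differentials $\Omega_{B/k}$ over the field $k$, and one for the module of special differentials $\Omega'_{B'/k}$ over the fixed power series ring $R$, and to show that after the base change $-\otimes_B B'$ the two presentations agree. First I would reduce to the affine case and choose a presentation of $B$ as a quotient $C = A[y_1,\ldots,y_t] \twoheadrightarrow B$ with kernel $J$; applying the usual conormal/second fundamental exact sequence gives $J/J^2 \to \Omega_{C/k}\otimes_C B \to \Omega_{B/k}\to 0$, and since $\Omega_{C/k}$ is free over $C$ (on $d a$ for $a$ in a finite generating set of $A$, together with the $dy_i$) this presents $\Omega_{B/k}$ explicitly. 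On the other side, $B' = C\otimes_A A'$ is a finitely generated $R$-algebra, and I can present it as a quotient of $S' := R[z_1,\ldots,z_N][y_1,\ldots,y_t]$ — first using the surjection $\psi\colon S = K\llbracket x_1,\ldots,x_N\rrbracket\to A'$ realized through $R\hookrightarrow S$, then adjoining the polynomial variables $y_i$ — and Lemmas~\ref{lem1}, \ref{lem2}, \ref{lem2_part2} give a presentation $I'/I'^2 \to \Omega'_{S'/k}\otimes_{S'}B' \to \Omega'_{B'/k}\to 0$ with $\Omega'_{S'/k}$ free over $S'$ on $d'x_1,\ldots,d'x_N, d'y_1,\ldots,d'y_t$.

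The heart of the argument is to match these presentations after tensoring the first one up to $B'$. The key point is that the natural map $\Omega_{B/k}\otimes_B B' \to \Omega'_{B'/k}$ sending $(db)\otimes 1 \mapsto d'_{B'/k}(b)$ is well defined: a $k$-derivation of $B'$, being a derivation of an $R$-algebra, is automatically special on $B'$ once it is special on $R$ — but $R=k\llbracket x\rrbracket$ is $(x)$-adically separated, so \emph{every} $k$-derivation $R\to M$ is special, by the remark following the definition of special derivations. Thus $\mathrm{Der}'_k(B',M) = \mathrm{Der}_k(B',M)$ for all $B'$-modules $M$, which means $\Omega'_{B'/k} = \Omega_{B'/k}$. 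So the proposition reduces to the purely classical base-change statement $\Omega_{B/k}\otimes_B B' \simeq \Omega_{B'/k}$. Since $B' = B\otimes_A A'$ and $A' = \widehat{A_\frm}$, this is exactly base change of Kähler differentials along the $k$-algebra map $A\to A'$: one has the first fundamental exact sequence $\Omega_{A/k}\otimes_A B' \to \Omega_{B'/k}\to \Omega_{B'/A'}\to 0$ together with the analogous sequence for $B/A$ tensored with $B'$, and $\Omega_{B'/A'} = \Omega_{B/A}\otimes_B B'$ because $B'/A'$ is obtained from $B/A$ by the flat base change $A\to A'$ (completion of a Noetherian local ring is flat).

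The main obstacle I anticipate is not conceptual but bookkeeping: one must make sure that the identification $\Omega'_{B'/k} = \Omega_{B'/k}$ is compatible with the identification of $R$ as a subring of $S = K\llbracket x_1,\ldots,x_N\rrbracket$ chosen via Cohen's theorem — i.e.\ that the $R$-algebra structure used to \emph{define} $\Omega'_{B'/k}$ is the one for which the separatedness argument applies. But this is immediate: $R = k\llbracket x_1,\ldots,x_N\rrbracket$ is always $\frm_R$-adically separated regardless of the choice, so the coincidence of special and ordinary $k$-derivations on $B'$ holds unconditionally, and the canonical nature of the classical base-change isomorphism then yields the canonical isomorphism \eqref{eq0_prop1}. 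I would close by remarking that canonicity makes the isomorphism independent of the auxiliary presentation $\psi$, which is what makes the statement usable in the body of the paper (cf.\ Remark~\ref{rem_invariance}).
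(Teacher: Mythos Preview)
Your argument contains a genuine error at the crucial step. You claim that since $R=k\llbracket x_1,\ldots,x_N\rrbracket$ is $\frm_R$-adically separated, every $k$-derivation $R\to M$ is special, and hence $\mathrm{Der}'_k(B',M)=\mathrm{Der}_k(B',M)$ for \emph{all} $B'$-modules $M$, yielding $\Omega'_{B'/k}=\Omega_{B'/k}$. But the remark following the definition of special derivations says a $k$-derivation $D\colon R\to M$ is automatically special if \emph{$M$} is separated in the $\frm_R$-adic topology, not if $R$ is. Since $R$ is always separated (it is a complete local ring), your reading would make the notion of special derivation vacuous. In fact your conclusion $\Omega'_{B'/k}=\Omega_{B'/k}$ is false and contradicts the very reason for introducing special differentials: the usual $\Omega_{B'/k}$ is not coherent. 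Already for $B=A=k[x]$, $\frm=(x)$, $B'=A'=k\llbracket x\rrbracket$, Lemma~\ref{lem2} gives $\Omega'_{A'/k}$ free of rank one, whereas $\Omega_{A'/k}$ is not finitely generated (the fraction field $k((x))$ has infinite transcendence degree over $k(x)$).

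For the same reason, the ``purely classical base-change statement'' $\Omega_{B/k}\otimes_B B'\simeq\Omega_{B'/k}$ to which you reduce is simply false; in the example above the left side is free of rank one while the right side is not finitely generated. What the paper actually does is more delicate: it observes that the equality $\mathrm{Der}'_k(A',M)=\mathrm{Der}_k(A',M)$ holds only for \emph{finitely generated} $A'$-modules $M$ (which are then $\frm_R$-adically separated), and uses this to build a map $\Omega'_{A'/k}\to\Omega_{A/k}\otimes_A A'$; the inverse map is constructed by extending the universal derivation by continuity, which requires knowing that $\Omega'_{A'/k}$ is finitely generated (Corollary~\ref{cor1}) and hence complete. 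The passage from $A$ to $B$ via a polynomial presentation is then as you describe.
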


\begin{proof}
We first show that $\Omega_{A/k}\otimes_AA'\simeq\Omega'_{A'/k}$. 
Note that if $M$ is a finitely generated $A'$-module, then $M$ is separated with respect to
the $\frm_{A'}$-adic topology (where $\frm_{A'}$ is the maximal ideal of $A'$), hence it is separated with respect to the $\frm_R$-adic topology. Therefore
${\rm Der}'_k(A',M)={\rm Der}_k(A',M)$. Let $d_{A/k}\colon A\to \Omega_{A/k}$ be the universal
$k$-derivation on $A$, and let $j\colon \Omega_{A/k}\to\Omega_{A/k}\otimes_AA'$
be the canonical map. Note that $\Omega_{A/k}\otimes_AA'$ is complete in the 
$\frm A_{\frm}$-adic topology, hence we get a unique $\widehat{d_{A/k}}
\in {\rm Der}_k(A',\Omega_{A/k}\otimes_AA')$ such that 
$\widehat{d_{A/k}}\circ\iota = j\circ d_{A/k}$, 
where $\iota\colon A\to A'$ is the completion map.
Since $\Omega_{A/k}\otimes_AA'$
is a finitely generated $A'$-module, it follows that 
 $\widehat{d_{A/k}}$ is a special derivation, and we have
a unique morphism of $A$-modules $f\colon \Omega'_{A'/k}\to\Omega_{A/k}\otimes_AA'$
such that $f\circ d'_{A'/k}=\widehat{d_{A/k}}$.

On the other hand, since $d'_{A'/k}$ is a derivation, there is a unique morphism of
$A$-modules $g\colon \Omega_{A/k}\to\Omega'_{A'/k}$ such that $g\circ d_{A/k}=d'_{A/k}\circ\iota$.
Since $\Omega'_{A'/k}$ is finitely generated over $A'$ by Corollary~\ref{cor1}, 
it is complete, hence $g$ induces
a (unique) morphism of $A'$-modules $\widehat{g}\colon\widehat{\Omega_{A/k}}=
\Omega_{A/k}\otimes_AA'\to
\Omega'_{A'/k}$ such that $\widehat{g}\circ j=g$. It is now easy to check that
$f$ and $\widehat{g}$ are inverse isomorphisms.

In order to prove the general statement for $B$, let us write $B\simeq A[y_1,\ldots,y_m]/I$.
In this case we have
\begin{equation}\label{eq_prop1}
\Omega_{B/k}\simeq\Big((\Omega_{A/k}\otimes_AB)\oplus\bigoplus_{i=1}^mB\cdot d_{B/k}(y_i)\Big)
/B\cdot \{d_{B/k}(u)\mid u\in I\},
\end{equation}
which after tensoring with $B'$ gives a description of $\Omega_{B/k}\otimes_BB'$.
Since we have a corresponding isomorphism $B'\simeq A'[y_1,\ldots,y_m]/(\iota(I))$,  using Lemmas~\ref{lem1} and \ref{lem2_part2}
we get an analogous formula for 
$\Omega'_{B'/k}$. The isomorphism (\ref{eq0_prop1}) now follows from the corresponding
isomorphism in the case $B=A$.
\end{proof}

It is standard to deduce from Lemma~\ref{lem4}  that for every scheme $X$ over $R$,
there is a quasicoherent sheaf $\Omega'_{X/k}$ such that for every affine open subset $U$
of $X$, the restriction of $\Omega'_{X/k}$ to $U$ is canonically isomorphic to the sheaf
associated to $\Omega'_{\cO(U)/k}$. It follows from Corollary~\ref{cor1} that if $X$ is of finite type
over $R$, then $\Omega'_{X/k}$ is coherent. Furthermore, Proposition~\ref{prop0}
implies that if $X$ is nonsingular, then $\Omega'_{X/k}$ is locally free.
The exact sequences in Lemmas~\ref{lem1} and
\ref{lem3} globalize in a straightforward way.

We now use the sheaves of special differentials to introduce the notion of relative 
canonical class in this setting. 
Let $X$ be a normal scheme of finite typer over $R$. Since the discussion that follows can be
done separately on each connected component of $X$, we may and will assume that $X$ is
irreducible.
Recall that since $R$ is excellent, the nonsingular locus $X_{\rm reg}$ of $X$ is an open subset
of $X$. Since $X$ is normal, the complement $X\smallsetminus X_{\rm reg}$ has codimension 
$\geq 2$ in $X$. In particular, restriction induces an isomorphism of class groups
${\rm Cl}(X)\simeq {\rm Cl}(X_{\rm reg})$. 

The restriction $\Omega'_{X/k}\vert_{X_{\rm reg}}$ is locally free, and let $M$ be its rank. 
On $X$ we have a Weil divisor $K_X$, uniquely defined up to rational equivalence, such that
$\cO(K_X)\vert_{X_{\rm reg}}\simeq\wedge^M\Omega'_{X_{\rm reg}/k}$. As in the case of schemes
of finite type over a field, we say that $X$ is $\QQ$-Gorenstein if there is a positive integer $r$
such that $rK_X$
is a Cartier divisor (the smallest such $r$ is the \emph{index} of $X$; any other $r$ with this property
is a multiple of the index). 

Suppose now that $\pi\colon Y\to X$ is a proper birational morphism of schemes over $R$,
with $Y$ nonsingular. The following lemma shows that the relative canonical class $K_{Y/X}$
can be defined in the same way as in the case of schemes of finite type over a field
(see \cite{Kol2}).

\begin{lemma}\label{lem3_1}
With the above notation, the following hold:
\begin{enumerate}
\item[i)] We may take $K_X=\pi_*(K_Y)$.
\item[ii)] If $rK_X$ is Cartier, then there is a unique $\QQ$-divisor
$K_{Y/X}$ supported on the exceptional locus of $\pi$ such that 
$rK_Y$ and $\pi^*(rK_X)+rK_{Y/X}$ are linearly equivalent. If $X$ is nonsingular, then
$K_{Y/X}$ is effective and its support is the exceptional locus ${\rm Exc}(\pi)$. 
\item[iii)] Suppose that $X$ is nonsingular, and that $E_1+\cdots+E_q$ is a divisor on $X$ having simple normal crossings.
If $F$ is a prime nonsingular divisor on $Y$ with corresponding valuation $\ord_F$,
and if $\ord_F(E_i)=a_i$ for every $i$, then $\ord_F(K_{Y/X})\geq a_1+\cdots+a_q-1$.
\end{enumerate}
\end{lemma}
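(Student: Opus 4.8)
```latex
\begin{proof}[Proof of Lemma~\ref{lem3_1} (plan)]
The plan is to transcribe the classical argument for schemes of finite type over a field
(see \cite{Kol2}), replacing the usual sheaf of differentials by $\Omega'$ throughout and
invoking the results of this appendix at each step where the special structure is used.

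For i), the point is that $\pi$ is an isomorphism over the nonsingular locus $X_{\rm reg}$
away from a closed subset of codimension $\geq 2$ in $X$, and over this locus
$\wedge^M\Omega'_{Y/k}$ and $\wedge^M\Omega'_{X/k}$ are identified by the natural
pullback map (here one uses Lemma~\ref{lem3}, which shows that on the common open set
$\Omega'_{Y/k}$ and $\pi^*\Omega'_{X/k}$ agree since $\Omega_{Y/X}$ vanishes there).
Since the class groups $\Cl(X)\simeq\Cl(X_{\rm reg})$ and similarly for $Y$, the Weil divisor
$\pi_*(K_Y)$ restricts to the chosen $K_X$ on $X_{\rm reg}$, so it is a legitimate choice of
$K_X$.

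For ii), assume $rK_X$ is Cartier. Then $\pi^*(rK_X)$ is a well-defined Cartier divisor on $Y$,
and $rK_Y-\pi^*(rK_X)$ is a divisor that is trivial on the preimage of $X_{\rm reg}$ minus a
codimension $\geq 2$ set (by i), together with the fact that $\pi$ restricted there is an open
immersion after removing the bad locus), hence it is supported on $\Ex(\pi)$; this gives the
existence and uniqueness of $K_{Y/X}$. To see that $\Ex(\pi)$ is a divisor, and that when $X$ is
nonsingular $K_{Y/X}$ is effective with support exactly $\Ex(\pi)$, I would localize at the generic
point of a component of $\Ex(\pi)$: there $\cO_{X,\cdot}$ is a regular local ring and the
corresponding valuation $\ord_F$ has positive value on the maximal ideal, so the standard
computation with a local generator of $\wedge^M\Omega'$ and Proposition~\ref{prop0}
(which identifies, over a regular ring, a basis of $\Omega'$ coming from a regular system of
parameters together with a basis of $\Omega'_{k(\frq)/k}$) shows that the coefficient of $F$
in $K_{Y/X}$ is strictly positive. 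The Jacobian-type computation is exactly the one that shows
$\Ex(\pi)$ is of pure codimension one.

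For iii), after localizing at the generic point of $F$ I would reduce to the following local
statement: $X=\Spec(A)$ with $A$ regular, $E_1+\cdots+E_q$ snc, $F$ a divisor over $X$ with
$\ord_F(E_i)=a_i$, and one must bound $\ord_F(K_{Y/X})$ from below by $\sum a_i-1$. Choosing
coordinates $u_1,\dots,u_q,\dots$ adapted to the snc divisor, one writes a local generator of
$\wedge^M\Omega'_{Y/k}$ in terms of the pullbacks $d'(u_i)$ and the extra differentials coming
from $\Omega'_{k(\frq)/k}$ via Proposition~\ref{prop0}; the chain-rule computation, identical to
the one over a field once one knows $\Omega'$ behaves like $\Omega$ over regular bases
(Lemmas~\ref{lem1}, \ref{lem2}, \ref{lem2_part2}, \ref{lem3} and Proposition~\ref{prop0}),
produces the bound. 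I expect the main obstacle to be purely bookkeeping: one must be careful
that the ``extra'' part of $\Omega'$ (the $\Omega'_{k(\frq)/k}$ summand, which is \emph{not}
present in the finite-type-over-a-field case) is pulled back isomorphically by $\pi$ over the
regular locus and therefore contributes nothing to $K_{Y/X}$, so that all the discrepancy comes
from the ``parameter'' part of the differentials, exactly as in the classical case. Granting that,
iii) follows from the usual snc estimate.
\end{proof}
```
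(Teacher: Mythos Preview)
Your plan is broadly the same as the paper's, but there is one point where your description is not merely imprecise but actually wrong, and it is exactly the point you flag as ``the main obstacle''.

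You write that the $\Omega'_{k(\frq)/k}$ summand ``is pulled back isomorphically by $\pi$ over the regular locus and therefore contributes nothing to $K_{Y/X}$''. This is not what happens. If $y\in Y$ maps to $x\in X$, the residue-field pieces are $\Omega'_{k(x)/k}$ and $\Omega'_{k(y)/k}$, and these have \emph{different} dimensions whenever $k(y)$ is a nontrivial extension of $k(x)$; there is no sense in which one pulls back to the other isomorphically. What the paper actually does (assuming $X$ nonsingular) is to look at the exact sequence from Lemma~\ref{lem3},
\[
\Omega'_{X/k,x}\otimes\cO_{Y,y}\;\overset{w}{\longrightarrow}\;\Omega'_{Y/k,y}\;\longrightarrow\;\Omega_{Y/X,y}\;\longrightarrow\;0,
\]
and to verify that the two free modules on the left have the \emph{same} rank $M$. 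By Proposition~\ref{prop0} the ranks are $\dim\cO_{X,x}+\dim_{k(x)}\Omega'_{k(x)/k}$ and $\dim\cO_{Y,y}+\dim_{k(y)}\Omega'_{k(y)/k}$, and one needs two further inputs you did not name: the Dimension Formula (birationality gives $\dim\cO_{Y,y}=\dim\cO_{X,x}+\mathrm{trdeg}(k(y)/k(x))$), and Lemma~\ref{lem2_6}, which gives $\dim_{k(y)}\Omega'_{k(y)/k}=\dim_{k(x)}\Omega'_{k(x)/k}+\mathrm{trdeg}(k(y)/k(x))$. These two shifts cancel, so the ranks agree. Once that is in place, $\wedge^M w$ globally cuts out an effective Cartier divisor $K_{Y/X}$ on $Y$ whose support is exactly the locus where $\Omega_{Y/X}\neq 0$, i.e.\ where $\pi$ is not \'etale, i.e.\ $\Ex(\pi)$. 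This simultaneously gives effectivity, the identification of the support, and the fact that $\Ex(\pi)$ is a divisor; you do not need to localize at a hypothetical generic point of a component of $\Ex(\pi)$ (an argument that would in any case be circular before you know $\Ex(\pi)$ is pure of codimension one).

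For iii) your plan is fine and matches the paper: once Proposition~\ref{prop0} supplies local bases of $\Omega'$ adapted to a regular system of parameters, the computation is literally the classical one, and the ``extra'' residue-field summand plays no role because it contributes the same block on both sides of $w$ (by Lemma~\ref{lem2_6} again).
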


\begin{proof}
In order to prove i), we may restrict to $X_{\rm reg}$, and therefore assume that $X$ is nonsingular.
If $y\in Y$ and $x=\pi(y)$, then Lemma~\ref{lem3} gives an exact sequence
$$U:=\Omega'_{X/k,x}\otimes\cO_{Y,y}\overset{w}\to V:=\Omega'_{Y/k,y}\to\Omega_{Y/X,y}\to 0.$$
Since $\pi$ is birational, it follows from the Dimension Formula (see
\cite[Theorem~15.6]{Matsumura}) that 
$$\dim(\cO_{Y,y})=\dim(\cO_{X,x})+{\rm trdeg}(k(y)/k(x)).$$
Since $\dim_{k(y)}(\Omega_{k(y)/k(x)})={\rm trdeg}(k(y)/k(x))$, we deduce from
Lemma~\ref{lem2_6} and Proposition~\ref{prop0} that $U$ and $V$ are free
$\cO_{Y,y}$-modules of the same rank $M$. It follows that $\wedge^Mw$ is given by the
equation of an effective divisor $K_{Y/X}$. The support of this divisor is the locus where $\pi$
is not \'{e}tale, which in this case is precisely the exceptional locus of $\pi$. The assertions in 
i) and ii) now easily follow. Due to the last assertion in Proposition~\ref{prop0}, we can deduce iii)
via the same computation as in the usual case of schemes of finite type over a field.
\end{proof}

\begin{remark}\label{rem3_1}
It follows from the above proof that if $X$ is nonsingular, then $K_{Y/X}$ is independent
of the structure of $X$ as an $R$-scheme. Indeed, $K_{Y/X}$ is the effective divisor defined by
the $0^{\rm th}$ Fitting ideal of $\Omega_{Y/X}$.
It is not clear to us whether the same remains true if $X$ is singular.
\end{remark}

\begin{lemma}\label{lem3_2}
If $Y'\overset{\phi}\to Y \overset{\pi}\to X$ are proper birational morphisms, with both
$Y$ and $Y'$ nonsingular, and if $X$ is $\QQ$-Gorenstein, then 
\begin{equation}\label{eq_lem3_1}
K_{Y'/X}=K_{Y'/Y}+\phi^*(K_{Y/X}).
\end{equation} 
\end{lemma}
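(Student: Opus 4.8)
The plan is to deduce the identity \eqref{eq_lem3_1} entirely from the characterization of the relative canonical divisor in Lemma~\ref{lem3_1}(ii), together with a short bookkeeping of linear equivalences and a verification of supports. Since $X$ is $\QQ$-Gorenstein, I would first fix a positive integer $r$ with $rK_X$ Cartier; the same $r$ then serves for both $\pi$ and $\pi\circ\phi$, while for $\phi$ the exponent $1$ suffices because $Y$ is nonsingular (so $K_Y$ is Cartier). Using Lemma~\ref{lem3_1}(i) I may take $K_X=\pi_*(K_Y)=(\pi\phi)_*(K_{Y'})$ and $K_Y=\phi_*(K_{Y'})$, so that $K_{Y/X}$, $K_{Y'/Y}$ and $K_{Y'/X}$ are honestly supported on ${\rm Exc}(\pi)$, ${\rm Exc}(\phi)$ and ${\rm Exc}(\pi\phi)$, respectively. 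Note also that $rK_{Y/X}$ is an integral divisor on the nonsingular variety $Y$, hence Cartier, so $\phi^*(rK_{Y/X})$ makes sense.

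Next I would combine the three defining relations
\begin{align*}
rK_Y&\sim \pi^*(rK_X)+rK_{Y/X},\\
K_{Y'}&\sim \phi^*(K_Y)+K_{Y'/Y},\\
rK_{Y'}&\sim (\pi\phi)^*(rK_X)+rK_{Y'/X}.
\end{align*}
Pulling back the first relation along $\phi$ gives $\phi^*(rK_Y)\sim(\pi\phi)^*(rK_X)+r\phi^*(K_{Y/X})$, since the pull-back of Cartier divisors preserves linear equivalence. Multiplying the second relation by $r$ and substituting yields
$$rK_{Y'}\sim (\pi\phi)^*(rK_X)+r\big(\phi^*(K_{Y/X})+K_{Y'/Y}\big).$$
Thus the $\QQ$-divisor $\phi^*(K_{Y/X})+K_{Y'/Y}$ satisfies exactly the linear equivalence that, by Lemma~\ref{lem3_1}(ii), characterizes $K_{Y'/X}$ (and $r$ times it is integral), so it remains only to check that it is supported on ${\rm Exc}(\pi\phi)$; the uniqueness clause of Lemma~\ref{lem3_1}(ii) then forces \eqref{eq_lem3_1}.

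The one genuinely delicate point — though still elementary — is this support verification, which I would carry out component by component. Since $\pi$ is birational, $\codim_X(\pi(Z))\ge\codim_Y(Z)$ for every closed subvariety $Z\subseteq Y$; applying this to the components of ${\rm Exc}(\phi)$ shows ${\rm Exc}(\phi)\subseteq {\rm Exc}(\pi\phi)$, so $K_{Y'/Y}$ is supported on ${\rm Exc}(\pi\phi)$. A prime divisor in $\Supp(\phi^*(K_{Y/X}))$ is either $\phi$-exceptional — already handled — or the strict transform of a prime divisor $D$ of $Y$ lying in $\Supp(K_{Y/X})$, i.e.\ a $\pi$-exceptional divisor; in the latter case its image under $\pi\phi$ equals $\pi(D)$, which has codimension $\ge 2$ in $X$, so this divisor is again $\pi\phi$-exceptional. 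This establishes the required support condition, and the proof concludes by the uniqueness in Lemma~\ref{lem3_1}(ii). (The rigidity fact needed for that uniqueness — that an exceptional integral divisor linearly equivalent to zero must vanish — has already been absorbed into the statement of Lemma~\ref{lem3_1}.)
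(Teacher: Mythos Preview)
Your proof is correct and follows exactly the same approach as the paper's own argument, just with considerably more detail: the paper simply observes that $r(K_{Y'/Y}+\phi^*(K_{Y/X}))$ is $\pi\circ\phi$-exceptional and linearly equivalent to $rK_{Y'}-(\pi\circ\phi)^*(rK_X)$, then implicitly invokes the uniqueness in Lemma~\ref{lem3_1}(ii). Your careful verification of the support condition makes explicit what the paper leaves to the reader.
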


\begin{proof}
It is enough to observe that if $rK_X$ is Cartier, then $r(K_{Y'/Y}+\phi^*(K_{Y/X}))$ is $\pi\circ\phi$-exceptional,
and it is linearly equivalent to $rK_{Y'}-(\pi\circ\phi)^*(rK_X)$.
\end{proof}

In the next proposition we consider an integral scheme $X$, of finite type over a field $k$
(assumed, as always, to have characteristic zero).
Suppose that $\pi\colon Y\to X$ is a proper birational morphism, 
with $Y$ nonsingular. Let $x\in X$
be a closed point, and consider the Cartesian diagram
$$
\xymatrix{
W \ar[r]^h \ar[d]_f & Y \ar[d]^\pi \\
Z={\rm Spec}(\widehat{\cO_{X,x}}) \ar[r]^(.7)g & X
}
$$
(see Remark~\ref{rmk:diagram} for general properties of such a diagram).

From now on, let us assume that $X$ is normal.
In this case $W$ is connected: otherwise the fiber over the unique closed point of
$Z$ would be disconnected, but this is the same as the fiber $\pi^{-1}(x)$.
Since $\pi$ is proper and birational, we deduce that $f$ as well has these two properties.
We consider both $Z$ and $W$ as
schemes over a formal power series ring over $k$, as in Proposition~\ref{prop1}.
Since $g$ and $h$ are flat, we may pull-back Weil divisors via both $g$ and
$h$.

\begin{proposition}\label{prop3_1}
With the above notation, we may take $K_Z=g^*(K_X)$. In particular, 
$rK_X$ is Cartier in a neighborhood of $x$ if and only if $rK_Z$ is Cartier, and in this case
$h^*(K_{Y/X})=K_{W/Z}$.
\end{proposition}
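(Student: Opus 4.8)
The plan is to reduce all three assertions to the comparison of sheaves of differentials in Proposition~\ref{prop1}, together with the flatness of $g$ and $h$ (Remark~\ref{rmk:diagram}) and standard facts about reflexive sheaves on normal schemes.

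\emph{Step 1: $K_Z=g^*(K_X)$.} Choose an affine open $U=\Spec A\subseteq X$ with $x\in U$, so that $Z=\Spec\widehat{A_\frm}$, where $\frm$ is the maximal ideal corresponding to $x$. Proposition~\ref{prop1}, applied with $B=A$, gives a canonical isomorphism $\Omega'_{Z/k}\simeq g^*\Omega_{X/k}$. Since $g$ is flat it preserves ranks of locally free sheaves, and since $g$ is regular we have $Z_{\rm reg}=g^{-1}(X_{\rm reg})$; hence the rank $M$ of $\Omega'_{Z/k}\vert_{Z_{\rm reg}}$ equals the rank $\dim X$ of $\Omega_{X/k}$ on $X_{\rm reg}$, and applying $\wedge^{M}$ to the isomorphism over $Z_{\rm reg}$ gives
$$\cO_Z(g^*K_X)\vert_{Z_{\rm reg}}\simeq g^*\bigl(\wedge^{M}\Omega_{X_{\rm reg}/k}\bigr)\simeq\wedge^{M}\Omega'_{Z_{\rm reg}/k}.$$
Thus $g^*K_X$ has the property defining $K_Z$, so we may take $K_Z=g^*(K_X)$. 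Moreover, since $g$ is flat, $X$ and $Z$ are normal, and $Z\smallsetminus Z_{\rm reg}=g^{-1}(X\smallsetminus X_{\rm reg})$ has codimension $\ge 2$, the functor $g^*$ carries reflexive sheaves to reflexive sheaves; therefore $g^*\cO_X(rK_X)=\cO_Z(rg^*K_X)=\cO_Z(rK_Z)$ for every $r$.

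\emph{Step 2: the Cartier statement.} The divisor $rK_X$ is Cartier in a neighbourhood of $x$ exactly when $\cO_X(rK_X)_x$ is a free $\cO_{X,x}$-module. As $\widehat{\cO_{X,x}}$ is a faithfully flat $\cO_{X,x}$-algebra, a finite $\cO_{X,x}$-module is free if and only if its completion is free over $\widehat{\cO_{X,x}}$; by the last sentence of Step~1 this completion is $\cO_Z(rK_Z)$, which is free if and only if $rK_Z$ is Cartier, since $Z$ is the spectrum of a local ring. This proves the equivalence.

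\emph{Step 3: $h^*(K_{Y/X})=K_{W/Z}$, assuming $rK_X$, hence $rK_Z$, is Cartier.} Covering $\pi^{-1}(U)$ by affine opens $V=\Spec B$ with $\pi(V)\subseteq U$, Proposition~\ref{prop1} applied to $A\to B$ gives a canonical isomorphism $\Omega'_{W/k}\simeq h^*\Omega_{Y/k}$; taking top exterior powers yields $\cO_W(K_W)\simeq h^*\cO_Y(K_Y)$, that is, $K_W\sim h^*K_Y$. Now pull back, along the flat morphism $h$, the linear equivalence $rK_Y\sim\pi^*(rK_X)+rK_{Y/X}$ furnished by Lemma~\ref{lem3_1}: using $h^*\pi^*=f^*g^*$, the identity $g^*\cO_X(rK_X)=\cO_Z(rK_Z)$ from Step~1, and $h^*\cO_Y(rK_Y)\simeq\cO_W(rK_W)$, one obtains $rK_W\sim f^*(rK_Z)+r\,h^*(K_{Y/X})$. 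Finally, $\Supp\bigl(h^*(K_{Y/X})\bigr)\subseteq h^{-1}(\Exc\pi)$, and any prime divisor $F'$ in $h^{-1}(\Exc\pi)$ satisfies $g(f(F'))=\pi(h(F'))\subseteq\pi(\Exc\pi)$, a set of codimension $\ge 2$ in $X$; since $g$ is flat this forces $f(F')$ to have codimension $\ge 2$ in $Z$, so $F'$ is $f$-exceptional. Hence $h^*(K_{Y/X})$ is a $\QQ$-divisor supported on $\Exc(f)$ satisfying the relation that characterises $K_{W/Z}$ (Lemma~\ref{lem3_1} ii), applied to the proper birational morphism $f$ from the nonsingular $W$ to the normal, $\QQ$-Gorenstein $Z$), and the uniqueness there gives $h^*(K_{Y/X})=K_{W/Z}$.

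The main obstacle is bookkeeping rather than conceptual: one must carefully distinguish the ordinary differentials $\Omega_{\bullet/k}$ of the finite-type $k$-schemes $X,Y$ from the special differentials $\Omega'_{\bullet/k}$ of the $R$-schemes $Z,W$, and verify that Proposition~\ref{prop1} genuinely applies to each base change in question; and one must be careful that flat pullback commutes with passing from a Weil divisor to its associated reflexive sheaf, which is precisely where normality and the codimension bound on the singular loci enter.
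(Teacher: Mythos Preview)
Your proof is correct and follows essentially the same approach as the paper: both reduce to Proposition~\ref{prop1} to identify $K_Z=g^*(K_X)$ and $K_W=h^*(K_Y)$, and then conclude $h^*(K_{Y/X})=K_{W/Z}$ by checking support on the exceptional locus and invoking the uniqueness in Lemma~\ref{lem3_1}~ii).

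There are two minor differences in execution. For the Cartier equivalence, the paper argues directly with ideals: writing $I=\cO(-D)\cdot\cO_{X,x}$ for an effective divisor $D$, it shows $\cO(-g^*D)=I\cdot\widehat{\cO_{X,x}}$ (using regularity of the fibers of $g$) and then observes that $I$ and $I\cdot\widehat{\cO_{X,x}}$ have the same minimal number of generators. Your route via faithful flatness and the identification $g^*\cO_X(rK_X)\simeq\cO_Z(rK_Z)$ is equally valid, though your stated justification that ``$g^*$ carries reflexive sheaves to reflexive sheaves'' is cleaner if phrased as ``flat pullback commutes with $\HOM(-,\cO)$ on finitely presented sheaves, hence with double dual'' rather than via the codimension remark. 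For the exceptionality of $h^*(K_{Y/X})$, the paper simply asserts that $h^{-1}(\Exc\pi)\subseteq\Exc(f)$; your explicit codimension argument using flatness of $g$ fills this in and is a nice addition.
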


\begin{proof}
Since $g$ is a regular morphism, it follows that $g^{-1}(X_{\rm reg})=Z_{\reg}$.
The first assertion in the proposition follows from the fact that if $g_0\colon {Z_{\rm reg}}\to X_{\rm reg}$
is the restriction of $g$,
then $g_0^*(\Omega_{X_{\rm reg}/k})\simeq\Omega'_{Z_{\rm reg}}$ by 
Proposition~\ref{prop1}. Furthermore, the same proposition implies that
we may take $K_W=h^*(K_Y)$.

Note now that if $D$ is a divisor on $X$, then $D$ is  Cartier in a neighborhood of $x$
if and only if $g^*(D)$ is Cartier. Indeed, for this we may assume that $D$ is effective,
and let $I=\cO(-D)\cdot\cO_{X,x}$. In this case $\cO(-g^*(D))=I\cdot\widehat{\cO_{X,x}}$,
since for every prime ideal $P$ in $\cO_{X,x}$ and every minimal prime ideal $Q$
in $\widehat{\cO_{X,x}}$ containing $P$, we have $P\cdot (\widehat{\cO_{X,x}})_Q=
Q\cdot(\widehat{\cO_{X,x}})_Q$ (this follows from the fact that the fiber over $P$
is nonsingular). It is now enough to note that $I\cdot \widehat{\cO_{X,x}}$
is principal if and only if $I$ is principal (more generally, $I$ and $I\cdot\widehat{\cO_{X,x}}$
have the same minimal number of generators). 

In particular, we see that $rK_X$ is Cartier in a neighborhood of $x$
if and only if $rK_Z$ is Cartier. 
 The last assertion in the proposition now follows from the fact that
$h^*(K_{Y/X})$ is supported on the inverse image via $h$ of the exceptional locus of $\pi$, hence
on the exceptional locus of $f$.
\end{proof}

\begin{remark}\label{divisor_over_point}
Suppose that $F$ is a prime nonsingular divisor on $Y$.
The pull-back $h^*(F)$ is a nonsingular divisor on $W$. If we consider
the irreducible components $E_1,\ldots,E_m$ of
$h^*(F)$, then 
the restriction of each $\ord_{E_i}$ to the function field of $X$ is equal to $\ord_F$.
We note that if the center of $F$ is $x$, then $h^*(F)$ 
 is abstractly isomorphic to $F$. In particular, $h^*(F)$ is a prime divisor.
\end{remark}

\section{Rational $\Q$-Gorenstein singularities in families}

Throughout this appendix, all varieties and schemes are of finite type over
a field $k$ of characteristic zero. At one point, we will need to assume that
$k=\CC$. 
Our goal is to prove Theorem~\ref{Gor_index}
on the behavior of the canonical class and Gorenstein index in families.
This implies the corollary about the generic behavior
of the log canonical threshold in families that is used in the proof of
Proposition~\ref{key_technical}.

In fact, Theorem~\ref{Gor_index} follows from the following more precise result,
for which we need to assume that $k$ is the field of complex numbers.
Given a positive integer $r$, we say that a normal variety $X$ is {\it $r$-Gorenstein}
at a point $x$ if $rK_X$ is Cartier at $x$. For a scheme $X\to T$ over $T$ we denote by 
$X_{\xi}$ the fiber over the not necessarily closed point $\xi\in T$.

\begin{theorem}\label{thm:Q-Gor}
Let $f \colon X \to T$ be a morphism of normal  varieties over $\CC$ such that
every fiber of $f$ is normal.
Then there are a positive integer $s$ and a nonempty Zariski open set $T^\o \subseteq T$
such that
for every closed point $t \in T^\o$, if $X_t$ has rational singularities at a closed point $x$,
then the following conditions are equivalent:
\begin{enumerate}
\item[(a)]
$X_t$ is $\Q$-Gorenstein at $x$;
\item[(b)]
$X_t$ is $s$-Gorenstein at $x$;
\item[(c)]
$X$ is $\Q$-Gorenstein at $x$;
\item[(d)]
$X$ is $s$-Gorenstein at $x$.
\end{enumerate}
\end{theorem}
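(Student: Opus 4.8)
The plan is to reduce the statement to a question about the relative dualizing sheaf and then use a stratification argument together with upper semicontinuity to get uniformity over a Zariski open subset of $T$.

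\textbf{Setup and the key sheaf.} First I would introduce the relative canonical sheaf $\omega_{X/T}$. Since $f$ is a morphism of normal varieties with normal fibers, the smooth locus $U\subseteq X$ of $f$ is fiberwise dense (the complement meets each fiber in codimension $\geq 2$), so $\omega_{X/T}:=j_*\Omega^{\dim X-\dim T}_{U/T}$ (with $j\colon U\hookrightarrow X$ the inclusion) is a reflexive sheaf that restricts to the reflexive sheaf $\omega_{X_t}$ on each fiber, at least after shrinking $T$. The point (a)$\Leftrightarrow$(b) and (c)$\Leftrightarrow$(d) then become statements about when a fixed power of this sheaf, or of $\omega_X$, is invertible, and the content of the theorem is (i) that one uniform $s$ works, and (ii) that $\Q$-Gorenstein-ness of $X_t$ at $x$ and of $X$ at $x$ coincide. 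The bridge between the total space and the fiber will be the base-change property $\omega_{X/T}\otimes k(t)\cong\omega_{X_t}$ as reflexive sheaves, which holds over a suitable open subset of $T$ by generic flatness plus the normality of the fibers; here the hypothesis that the fibers have \emph{rational} singularities is what makes $\omega_{X_t}$ behave well (Grauert--Riemenschneider-type vanishing, so that $\omega_{X_t}$ is the pushforward of the canonical sheaf of a resolution) and lets one control $\omega_{X/T}^{[m]}\otimes k(t)\cong\omega_{X_t}^{[m]}$.

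\textbf{Getting a uniform index.} For the existence of a single $s$: consider the sheaves $\omega_{X/T}^{[m]}$ (reflexive hulls of tensor powers) for $m\geq 1$. The locus in $X$ where $\omega_{X/T}^{[m]}$ fails to be invertible is closed; call it $Z_m$. One has $Z_{ma}\subseteq Z_m$ for all $a\geq 1$, and for a fixed point the set of $m$ with $x\notin Z_m$ is a (possibly empty) subsemigroup of $\Z_{>0}$ closed under passing to multiples, hence generated by finitely many integers whose lcm we may call a local index. The problem is to make this finite uniformly over all fibers at once. Here I would use Noetherian induction on $T$: over the generic point of $T$ the variety $X_{\eta}$ is a variety over the function field $k(T)$, so it has some Gorenstein index $s_0$ at every point; spreading out, $s_0 K_X$ and $s_0 K_{X_t}$ are Cartier at every point lying over some open $T^\o\subseteq T$ \emph{at which the fiber is already known to be $\Q$-Gorenstein}. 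The subtlety is that $\Q$-Gorenstein-ness of the fiber is part of the hypothesis on $t$, not something we know generically, so the stratification has to be by the behavior of $\omega_{X/T}$: stratify $T$ into locally closed pieces over which $\omega_{X/T}^{[m]}$ restricted to $X_t$ has constant "reflexive rank one invertibility locus", take $s$ to be a common multiple of the finitely many indices appearing, and use Noetherian induction to pass from the open stratum to the lower-dimensional ones.

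\textbf{Comparing fiber and total space.} The equivalence (b)$\Leftrightarrow$(d), i.e. that $s K_{X_t}$ is Cartier at $x$ iff $s K_X$ is Cartier at $x$, is where the rational-singularities hypothesis and the analytic methods over $\CC$ enter, and I expect this to be the main obstacle. In one direction it is formal: if $sK_X$ is Cartier at $x$ then restricting the line bundle $\mathcal O_X(sK_X)$ to the fiber and comparing with $\omega_{X_t}^{[s]}$ using the base-change isomorphism shows $sK_{X_t}$ is Cartier at $x$ (adjusting $s$ to absorb the failure of $\mathcal O_X(sK_X)\otimes k(t)\to\omega_{X_t}^{[s]}$ to be an isomorphism, which happens only over a proper closed subset of $T$). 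The reverse direction is the hard one: knowing $\omega_{X_t}^{[s]}$ is invertible at $x$, deduce $\omega_{X/T}^{[s]}$ is invertible near $x$ — equivalently that the reflexive sheaf $\omega_{X/T}^{[s]}$, which is invertible on the fiber, is invertible on the total space. This is a statement of the form "a reflexive sheaf that is invertible on a Cartier divisor (a fiber, after shrinking $T$ to make $f$ flat with reduced fibers) is invertible in a neighborhood", which is true for the structure sheaf but requires work in general; I would prove it by showing $\omega_{X/T}^{[s]}$ is flat over $T$ near $x$ (using rational singularities of the fibers to get that the formation of these reflexive powers commutes with base change, via a resolution $\tilde X\to X$ and relative Grauert--Riemenschneider vanishing $R^{>0}(\tilde f)_*\omega_{\tilde X/T}=0$ after shrinking $T$), and then invoking the local criterion for a flat family of sheaves to be locally free when it is locally free on one fiber. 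Passing from $\CC$ to an arbitrary algebraically closed field of characteristic zero in Theorem~\ref{Gor_index} will then be done by a standard Lefschetz-principle spreading-out argument, which I would defer.
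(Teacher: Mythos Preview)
Your proposal has a genuine gap, rooted in a misidentification of where the difficulty lies. In the paper the integer $s$ is taken from the \emph{total space}: by a one-line Noetherian argument (Lemma~\ref{lem:open-nK_X}) there is an $s$ such that the $\Q$-Gorenstein locus of $X$ coincides with its $s$-Gorenstein locus, so (c)$\Leftrightarrow$(d) is immediate. Then (b)$\Leftrightarrow$(d) is exactly Lemma~\ref{lem:rest-O(mK_X)}: for the single fixed $m=s$, generic freeness lets one shrink $T$ so that $\O(sK_X)\vert_{X_t}\cong\O(sK_{X_t})$ as reflexive sheaves, whence $sK_X$ is Cartier at $x$ iff $sK_{X_t}$ is. No Grauert--Riemenschneider is needed and this is not the hard step. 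The hard implication is (a)$\Rightarrow$(c): from the hypothesis that \emph{some} $mK_{X_t}$ is Cartier at $x$, with $m$ a priori depending on $t$ and $x$, deduce that $X$ is $\Q$-Gorenstein there. Your ``uniform index'' paragraph attempts (a)$\Rightarrow$(b) instead, but the Noetherian induction breaks for the reason you yourself flag --- nothing tells you $X_\eta$ is $\Q$-Gorenstein anywhere --- and your fallback stratification by the invertibility locus of $\omega_{X/T}^{[m]}$ must be run for all $m$ simultaneously, producing infinitely many strata and no finite common multiple. Relative Grauert--Riemenschneider controls $\omega_{X/T}$ and its base change, not its reflexive powers for varying $m$, so it does not bound the index.

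The paper's argument for (a)$\Rightarrow$(c) is genuinely analytic and topological, which is why the theorem is stated over $\CC$. One fixes a resolution $g\colon Y\to X$ and uses Verdier's theorem to stratify $X=\bigsqcup X^\alpha$ so that $g$ is topologically locally trivial over each stratum. If $mK_{X_t}$ is Cartier at $x$, then on a small contractible analytic neighborhood $V$ one has $\O(mK_{Y_t})^{\rm an}\cong\O(E_t)^{\rm an}$ for some exceptional $E_t$, which extends to $L:=\O(mK_Y-E)^{\rm an}$ on $g^{-1}(V)$; it suffices to show some power $L^\ell$ is trivial. Here is where rational singularities enter (and not through base change of $\omega$): by \cite[(12.1.4)]{KM2}, if no $L^\ell$ is trivial then $L$ has nonzero degree on some $g$-exceptional curve over a point $p\in V$, hence $c_1(L)$ has nonzero image in $H^2(Y_p,\ZZ)$. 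Topological local triviality along the stratum containing $p$ transports this class to a point $q\in X_t\cap V$, contradicting the triviality of $L\vert_{g_t^{-1}(V_t)}$. There is no evident purely coherent-sheaf substitute for this step.
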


Before giving the proof of the theorem, we start with some general considerations. 
Recall first Grothendieck's Generic Freeness Theorem 
(see, for example, \cite[Theorem~14.4]{Eis}).

\begin{theorem}[Generic Freeness Theorem]\label{thm:gen-free}
Let $\phi\colon A \to B$ be a ring homomorphism of finite type, 
with $A$ a Noetherian integral domain. 
If $M$ is a finitely generated $B$-module, then there is a nonzero $a \in A$ such that 
$M_a$ is a free $A_a$-module.
\end{theorem}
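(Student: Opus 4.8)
This is Grothendieck's classical Generic Freeness Theorem, and the plan is to run the standard dévissage argument. First, since $B$ is a finitely generated $A$-algebra it is a quotient of a polynomial ring $A[x_1,\dots,x_n]$, and a finitely generated $B$-module $M$ is then also finitely generated over $A[x_1,\dots,x_n]$; as freeness over $A_a$ concerns only the $A$-module structure, it suffices to treat the case $B=A[x_1,\dots,x_n]$. I would prove this by induction on $n$. \textbf{Base case $n=0$:} a finitely generated module $M$ over the Noetherian domain $A$ has a finite filtration $0=M_0\subsetneq\cdots\subsetneq M_r=M$ with $M_i/M_{i-1}\cong A/\mathfrak p_i$ for primes $\mathfrak p_i$; choosing $0\neq a_i\in\mathfrak p_i$ whenever $\mathfrak p_i\neq(0)$ and letting $a$ be the product of these (or $a=1$ if all $\mathfrak p_i=(0)$), each $(M_i/M_{i-1})_a$ is free over $A_a$ (it is either $0$ or $A_a$), and since an extension of free modules splits (free modules are projective), $M_a$ is free over $A_a$.

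\textbf{Inductive step.} Put $C=A[x_1,\dots,x_{n-1}]$, so $A[x_1,\dots,x_n]=C[x_n]$. Pick a finitely generated $C$-submodule $N\subseteq M$ generating $M$ over $C[x_n]$, and set $N_j=N+x_nN+\dots+x_n^jN$ and $N_{-1}=0$, so that $(N_j)_{j\ge -1}$ is an increasing chain of finitely generated $C$-modules with union $M$, and multiplication by $x_n$ induces surjections $\bar\mu_j\colon N_j/N_{j-1}\twoheadrightarrow N_{j+1}/N_j$. The graded module $G=\bigoplus_{j\geq 0}N_j/N_{j-1}$, with its natural $C[x_n]$-module structure in which $x_n$ acts through the $\bar\mu_j$, is generated by its degree-zero part $N$, hence is a finitely generated, therefore Noetherian, $C[x_n]$-module; consequently its graded submodule $\bigoplus_j\ker(\bar\mu_j)$ (note $x_n\cdot\ker(\bar\mu_j)=0$) is finitely generated and thus concentrated in bounded degree, which forces $\bar\mu_j$ to be an isomorphism for all $j\geq L$, for some $L$. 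Set $Q:=N_L/N_{L-1}$, a finitely generated $C$-module; then $M/N_{L-1}$ carries a filtration all of whose successive quotients are isomorphic to $Q$. By the inductive hypothesis applied to the finitely generated $C$-modules $N_{L-1}$ and $Q$, there is $0\neq a\in A$ with $(N_{L-1})_a$ and $Q_a$ both free over $A_a$. Then every finite stage of the filtration of $(M/N_{L-1})_a$ is an iterated extension of copies of the free (hence projective) module $Q_a$, so the inclusions split, exhibiting $(M/N_{L-1})_a$ as a direct sum of copies of $Q_a$ and thus free over $A_a$; finally the extension $0\to(N_{L-1})_a\to M_a\to(M/N_{L-1})_a\to 0$ of free $A_a$-modules splits, so $M_a$ is free over $A_a$.

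\textbf{The main obstacle} is exactly the inductive step: producing a single nonzero $a\in A$ that simultaneously handles the infinitely many layers $N_j/N_{j-1}$. This is where Noetherianity of $C[x_n]$ is used, through the stabilization $\ker(\bar\mu_j)=0$ for $j\gg 0$, which collapses these infinitely many layers to the single module $Q$. The remaining ingredients — the dévissage, the fact that an extension of free modules is free, and that a direct limit along split inclusions of free modules is free — are purely formal.
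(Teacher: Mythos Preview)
Your argument is the standard d\'evissage proof and is correct as written; the only point worth double-checking is the identification of the colimit $(M/N_{L-1})_a=\varinjlim_j (N_j/N_{L-1})_a$ with a direct sum of copies of $Q_a$, but this follows immediately once you observe that each transition map is a split inclusion with cokernel $Q_a$, so the colimit is $\bigoplus_{j\ge L}Q_a$. The paper itself does not prove this theorem: it simply recalls the statement and cites \cite[Theorem~14.4]{Eis}, where essentially the same d\'evissage argument you wrote is carried out.
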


\begin{corollary}\label{cor:gen-free}
If $f \colon X \to T$ is a scheme morphism of finite type, 
with $T$ a Noetherian integral scheme, then there is a nonempty 
open subset $W$ in $T$ such that $f^{-1}(W)\to W$ is flat.
Furthermore, given a complex of coherent sheaves on $X$
$$
{\mathcal C} \colon \cF' \to \cF \to \cF'',
$$
with homology sheaf $\cH({\mathcal C})$, we can choose $W$ such
 that for every $\xi \in W$ the canonical morphism 
$\cH({\mathcal C})\otimes \cO_{X_{\xi}}  \to \cH({\mathcal C}\otimes\O_{X_{\xi}})$
is an isomorphism.
\end{corollary}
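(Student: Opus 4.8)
The plan is to reduce everything to the affine, module-theoretic situation and apply the Generic Freeness Theorem (Theorem~\ref{thm:gen-free}) finitely many times. Since $T$ is Noetherian and integral, it is irreducible with generic point $\eta$, so every nonempty open subset of $T$ contains $\eta$; hence a finite intersection of nonempty open subsets of $T$ is again nonempty and open. It therefore suffices to produce, for each of the finitely many affine pieces below, a basic open subset on which the desired conclusion holds, and then intersect.

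First I would cover $T$ by finitely many affine opens $T_j=\Spec A_j$, and for each $j$ cover $f^{-1}(T_j)$ by finitely many affine opens $\Spec B_{j,k}$, so that $A_j\to B_{j,k}$ is of finite type; here each $A_j$ is a Noetherian domain and each $B_{j,k}$ is Noetherian. For the flatness assertion, Theorem~\ref{thm:gen-free} applied to $M=B_{j,k}$ (as a module over itself) yields $0\ne a_{j,k}\in A_j$ with $(B_{j,k})_{a_{j,k}}$ free, hence flat, over $(A_j)_{a_{j,k}}$. Setting $W_0=\bigcap_{j,k}D(a_{j,k})$, a nonempty open subset of $T$ contained in every $T_j$, the morphism $f^{-1}(W_0)\to W_0$ is flat: this can be checked on the affine open cover by the $\Spec B_{j,k}\cap f^{-1}(W_0)$, each of which is flat over $W_0$ by base change of $\Spec B_{j,k}\to D(a_{j,k})$.

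For the statement about homology, write the complex $\mathcal{C}$ over $\Spec B_{j,k}$ as $M'\xrightarrow{\alpha}M\xrightarrow{\beta}M''$, a complex of finitely generated $B_{j,k}$-modules (all relevant submodules, quotients, kernels and images are finitely generated since $B_{j,k}$ is Noetherian). Now I would apply Theorem~\ref{thm:gen-free} simultaneously to the finitely many $B_{j,k}$-modules $M/\im\alpha$, $\,M/\ker\beta$ (which is $\im\beta$), $\,M''/\im\beta$ and $H:=\ker\beta/\im\alpha$, obtaining $0\ne b_{j,k}\in A_j$ over which all four are free, hence flat, over $(A_j)_{b_{j,k}}$. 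Shrinking $W_0$ by the opens $D(b_{j,k})$ over all affine pieces produces the sought-for $W$ (note $W\subseteq W_0$, so the flatness conclusion persists).

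It remains to check that over such a $W$ the canonical morphism $\cH(\mathcal{C})\otimes\cO_{X_\xi}\to\cH(\mathcal{C}\otimes\cO_{X_\xi})$ is an isomorphism for every $\xi\in W$; since this is local, it suffices to argue on each $\Spec B_{j,k}$, where pulling back to $X_\xi$ amounts to tensoring over $A_j$ with the residue field $\kappa(\xi)$. The basic tool is that whenever $0\to K\to N\to Q\to 0$ is exact with $Q$ flat over $A_j$, the sequence stays exact after $\otimes_{A_j}\kappa(\xi)$ (by the Tor long exact sequence). Applying this to $0\to\im\beta\to M''\to M''/\im\beta\to 0$, then to $0\to\ker\beta\to M\to\im\beta\to 0$, then to $0\to\im\alpha\to M\to M/\im\alpha\to 0$, and finally to $0\to\im\alpha\to\ker\beta\to H\to 0$, one identifies $\ker(\beta\otimes\kappa(\xi))$ with $(\ker\beta)\otimes\kappa(\xi)$ and $\im(\alpha\otimes\kappa(\xi))$ with $(\im\alpha)\otimes\kappa(\xi)$ compatibly as submodules of $M\otimes\kappa(\xi)$, whence $\cH(\mathcal{C}\otimes\cO_{X_\xi})=\big((\ker\beta)\otimes\kappa(\xi)\big)\big/\big((\im\alpha)\otimes\kappa(\xi)\big)=H\otimes\kappa(\xi)$, which is exactly the target of the canonical map; that the identification coincides with that map is a diagram chase. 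The step I expect to require the most care is the bookkeeping of precisely which finitely many modules must be made flat in order to run these four short exact sequences, together with verifying that the affine-local isomorphisms are compatible with the globally defined canonical morphism (so that they glue to a global isomorphism); everything else is routine.
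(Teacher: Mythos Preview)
Your proof is correct and follows essentially the same approach as the paper: reduce to the affine case and apply the Generic Freeness Theorem to finitely many auxiliary modules so that the relevant short exact sequences remain exact after tensoring with $\kappa(\xi)$. The paper's one-line sketch makes the images and cokernels of the two arrows flat, whereas you make $\coker\alpha$, $\im\beta$, $\coker\beta$, and $H$ flat; these are equivalent choices, and your write-up simply fills in the details the paper leaves to the reader.
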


\begin{proof}
The first assertion follows easily from the theorem. For the second one, 
note that by the theorem, we may choose $W$ such that 
the images and the cokernels of the arrows in $\mathcal C$ are all flat over $W$. 
It is then easy to see that $W$ has the required property.
\end{proof}

The following lemmas will be used in the proof of Theorems~\ref{thm:Q-Gor}
and \ref{Gor_index}.

\begin{lemma}\label{lem:rest-O(mK_X)}
Let $f \colon X \to T$ be a morphism of normal schemes such that all fibers of $f$ are 
normal. For every positive integer $m$, there is an open subset $W_m \subseteq T$ 
such that for every $\xi \in W_m$ we have a canonical isomorphism 
$$
\O(mK_X)\vert_{X_{\xi}} \cong \O(mK_{X_{\xi}}).
$$ 
In particular, for every $\xi \in W_m$, the divisor $mK_X$ is Cartier at a point 
$x \in X_{\xi}$ if and only if $mK_{X_{\xi}}$ is Cartier at $x$.
\end{lemma}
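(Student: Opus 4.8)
The plan is to describe $\O(mK_X)$, away from a closed subset of codimension $\ge 2$, in terms of relative differentials, and then to show that this description commutes with restriction to a sufficiently general fibre. Since the statement is local on $X$ and $W_m$ will be obtained as a finite intersection, I may assume that $X$ is affine; passing to connected components I may also assume that $X$ and $T$ are integral, with both normal. Write $U\subseteq X$ for the relative smooth locus of $f$. Shrinking $T$ I arrange at once: $f$ is flat (generic flatness, Corollary~\ref{cor:gen-free}); $T$ is smooth over $k$ and $\omega_{T/k}\cong\O_T$ (pass to $T_{\rm reg}$, then trivialize the line bundle on a dense open); $\O(mK_X)$ is flat over $T$ (apply Theorem~\ref{thm:gen-free} to the $\O(X)$-module $\Gamma(X,\O(mK_X))$); and $X\setminus U$ has codimension $\ge 2$ in $X$ (remove the finitely many components of $X\setminus U$ not dominating $T$; the dominant ones meet the generic fibre in codimension $\ge 2$, since that fibre is normal). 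Because $f$ is flat with normal fibres and $k(\xi)$ is perfect, one has $U\cap X_\xi=(X_\xi)_{\rm reg}$, a big open subset of $X_\xi$, for every $\xi\in T$; and since $T$ is smooth, $U$ is smooth over $k$, so $U\subseteq X_{\rm reg}$.

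Normality of $X$ together with $\codim(X\setminus U,X)\ge 2$ gives $\O(mK_X)=j_*\big(\omega_{U/k}^{\otimes m}\big)$, where $j\colon U\hookrightarrow X$. The smooth morphism $f|_U$ yields a locally split exact sequence $0\to f^*\Omega_{T/k}\to\Omega_{U/k}\to\Omega_{U/T}\to 0$, so, using $\omega_{T/k}\cong\O_T$, we obtain $\omega_{U/k}\cong\omega_{U/T}$ compatibly with restriction to fibres; and base change of Kähler differentials along $f|_U$ gives $\omega_{U/T}^{\otimes m}\vert_{U\cap X_\xi}\cong\omega_{(X_\xi)_{\rm reg}/k(\xi)}^{\otimes m}$. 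Hence for every $\xi$ the restriction of $\O(mK_X)$ to the big open $U\cap X_\xi=(X_\xi)_{\rm reg}$ of $X_\xi$ is canonically $\O(mK_{X_\xi})\vert_{(X_\xi)_{\rm reg}}$.

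The key point is that $\O(mK_X)\vert_{X_\xi}$ is already reflexive as an $\O_{X_\xi}$-module for $\xi$ in a suitable open subset of $T$; granting this, since $\O(mK_{X_\xi})$ is also reflexive and the two sheaves agree on the big open $(X_\xi)_{\rm reg}$, they are canonically isomorphic, which proves the first assertion. To get reflexivity on the fibre I would use that $\O(mK_X)$, being reflexive on the affine normal $X$, fits (dualize a finite presentation of its dual) in an exact sequence $0\to\O(mK_X)\to\O_X^{\oplus a}\xrightarrow{\alpha}\O_X^{\oplus b}$. Applying the second part of Corollary~\ref{cor:gen-free} to the complex $[\,0\to\O_X^{\oplus a}\xrightarrow{\alpha}\O_X^{\oplus b}\,]$, whose homology sheaf is $\O(mK_X)$, furnishes an open subset of $T$ over which $\O(mK_X)\vert_{X_\xi}\cong\ker\big(\O_{X_\xi}^{\oplus a}\to\O_{X_\xi}^{\oplus b}\big)$; this kernel is torsion free, and the standard depth inequality for a short exact sequence shows it satisfies Serre's condition $S_2$, hence it is reflexive because $X_\xi$ is normal. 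I take $W_m$ to be the intersection, over the members of a finite affine cover of $X$, of all the open subsets of $T$ produced above.

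For the final assertion: if $mK_X$ is Cartier at a point $x\in X_\xi$, then $\O(mK_X)$ is invertible near $x$, hence so is its restriction to $X_\xi$, i.e. $mK_{X_\xi}$ is Cartier at $x$. Conversely, if $mK_{X_\xi}$ is Cartier at $x$, then $\O(mK_X)\vert_{X_\xi}$ is invertible near $x$, and since $\O(mK_X)$ is flat over $T$, the fibrewise criterion for freeness (Nakayama together with the vanishing of $\mathrm{Tor}_1^{\O_T}$) shows that $\O(mK_X)$ is invertible near $x$. I expect the main obstacle to be the reflexivity step in the third paragraph: transferring reflexivity of $\O(mK_X)$ to a general fibre, for which the base change statement for the homology of a complex in Corollary~\ref{cor:gen-free} is exactly what is needed; the remaining steps are routine manipulations with reflexive sheaves and generic flatness.
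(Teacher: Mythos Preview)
Your proof is correct and follows essentially the same approach as the paper: reduce to showing that $\O(mK_X)\vert_{X_\xi}$ is reflexive for general $\xi$, and do this by writing $\O(mK_X)$ as the kernel of a map between free sheaves and applying the generic base-change statement of Corollary~\ref{cor:gen-free}. Your treatment is somewhat more detailed than the paper's (you spell out the identification over the smooth locus via $\omega_{U/T}$ and trivialization of $\omega_{T/k}$, and you prove the final ``Cartier'' equivalence explicitly using flatness of $\O(mK_X)$ over $T$), but the core argument is the same.
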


\begin{proof}
By Corollary~\ref{cor:gen-free}, after replacing $T$ by an open subset
we may assume that $f$ is flat. 
We may clearly also assume that $T$ is nonsingular.
In particular, if $x$ 
is a nonsingular point of $X_{\xi}$, then both $f$ and $X$ are smooth at $x$. 
In this case we clearly have a canonical isomorphism 
$\O(mK_X)\vert_{X_{\xi}} \cong \O(mK_{X_{\xi}})$ 
in a neighborhood of $x$ (where $X_{\xi}$ is considered as a scheme over 
$\Spec(k(\xi))$. Since the complement of $(X_{\xi})_{\rm reg}$
in $X_{\xi}$ has codimension $\ge 2$, it is enough to find $W_m$ such that for every $\xi \in W_m$, 
the restriction $\O(mK_X)\vert_{X_{\xi}}$ is reflexive.

After covering $X$ by affine open subsets, we may assume that $X$ is affine. Since 
$\O(mK_X)$ is reflexive, we may write it as the kernel of a morphism 
$\phi\colon {\mathcal E_1} \to {\mathcal E}_0$ of free coherent sheaves on $X$. 
By Corollary~\ref{cor:gen-free},
there is an open subset $W_m \subseteq T$ such that for every $\xi$ in $W_m$
the restricted sheaf $\O(mK_X )\vert_{X_{\xi}}$ is isomorphic to the kernel
of the restriction of $\phi$ to $X_{\xi}$, which is a reflexive sheaf.
This completes the proof.
\end{proof}

\begin{lemma}\label{lem:open-nK_X}
If X is a normal scheme, and 
$$
U = \{ x \in X \mid \text{$X$ is $\Q$-Gorenstein at $x$}\}
$$
is the $\Q$-Gorenstein locus of $X$, then $U$ is open in $X$, 
and there is a positive integer $s$ such that 
$sK_X$ is Cartier on $U$. 
\end{lemma}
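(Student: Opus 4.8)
The plan is to deduce both assertions from the open loci $U_m := \{x \in X : mK_X \text{ is Cartier at } x\}$, $m \ge 1$, together with quasi-compactness. First I would reduce to the integral case: since $X$ is a normal scheme of finite type over $k$, it is Noetherian, hence has finitely many connected components, each of which is integral and normal; it then suffices to prove the statement on each component and take the least common multiple of the resulting integers. So assume $X$ integral and normal.

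Next I would check that each $U_m$ is open. On the integral scheme $X$, the sheaf $\O(mK_X)$ is coherent, reflexive, and of rank one; in particular it is torsion-free, so the stalk $\O(mK_X)_x$ is a nonzero torsion-free module over the domain $\O_{X,x}$, whence $\dim_{k(x)}\big(\O(mK_X)\otimes k(x)\big) \ge 1$ for every $x$. When this fiber dimension equals $1$, Nakayama produces a surjection $\O_{X,x} \twoheadrightarrow \O(mK_X)_x$ whose kernel is torsion in $\O_{X,x}$, hence zero; so $\O(mK_X)_x$ is free of rank one, i.e.\ $mK_X$ is Cartier at $x$. Therefore $U_m = \{x : \dim_{k(x)}(\O(mK_X)\otimes k(x)) \le 1\}$, which is open by upper semicontinuity of the fiber dimension of a coherent sheaf. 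Since $X$ is $\Q$-Gorenstein at $x$ exactly when $x \in U_m$ for some $m$, we get $U = \bigcup_{m \ge 1} U_m$, which is open.

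Finally, for the uniform index: if $m \mid m'$, then over the open set where $\O(mK_X)$ is invertible the canonical map $\O(mK_X)^{\otimes (m'/m)} \to \O(m'K_X)$ is an isomorphism (both sides being line bundles, the right one its own reflexive hull), so $U_m \subseteq U_{m'}$; thus $\{U_m\}_{m \ge 1}$ is a directed (under divisibility) open cover of $U$. Because $X$ is Noetherian, the open subset $U$ is quasi-compact, so finitely many of the $U_m$, say $U_{m_1}, \dots, U_{m_\ell}$, already cover $U$; setting $s = \operatorname{lcm}(m_1, \dots, m_\ell)$ we have $U_{m_i} \subseteq U_s$ for each $i$, hence $U \subseteq U_s$, i.e.\ $sK_X$ is Cartier on $U$. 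I expect the only non-formal step to be this last one — passing from ``Cartier pointwise'' to ``Cartier with one fixed index'' — where the quasi-compactness of $U$ is the key input; the rest is standard coherent-sheaf bookkeeping.
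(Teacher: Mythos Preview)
Your proof is correct and follows essentially the same approach as the paper: define the open loci $U_m$, observe they form a directed system under divisibility with union $U$, and use Noetherianity to extract a single index $s$. The paper invokes the Noetherian property directly (the family $\{U_m\}$ has a maximal element), while you phrase the same step via quasi-compactness of $U$; you also spell out the argument for openness of $U_m$ that the paper leaves implicit.
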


\begin{proof}
For every positive integer $m$, the set
$$
U_m = \{ x \in X \mid \text{$X$ is $m$-Gorenstein at $x$}\}
$$
is open in X (it is nonempty, since it contains $X_{\rm reg}$). 
Note that $U = \bigcup_{m \ge 1} U_m$. 
Furthermore, we have $U_k \subseteq U_m$ if $k$ divides $m$. 
It follows by the Noetherian property that there is a unique maximal set among all these
open sets. In other words, there is a positive integer $s$ such that
$U = U_s$. 
\end{proof}

\begin{lemma}\label{lem:an}
Let $X$ be a normal scheme, and let $g\colon Y \to X$ be a resolution of singularities. 
For a positive integer $m$, the divisor $mK_X$ is Cartier at a closed point $x \in X$
if and only if there is an open neighborhood $V$ of $x$ and
a $g$-exceptional divisor $E$ on Y such that 
$\O(mK_Y) \cong \O(E)$ on $g^{-1}(V)$. 
Furthermore, if the ground field is $\C$, 
then it is enough to find an open neighborhood $V$ of $x$ 
in the analytic topology such that 
$\O(mK_Y)^{\rm an} \cong \O(E)^{\rm an}$ on $g^{-1}(V)$.
\end{lemma}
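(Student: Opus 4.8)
The plan is to reduce both directions to two elementary facts about a resolution $g\colon Y\to X$ of a normal variety over $k$: one may choose the canonical representatives compatibly, $K_X=g_*(K_Y)$ (cf.\ Lemma~\ref{lem3_1}(i) and \cite{Kol2}), and proper pushforward of Weil divisors annihilates $g$-exceptional divisors while sending a principal divisor $\divisor(\psi)$ to $\divisor(\psi)$. I will also use that we may assume $g$ is an isomorphism over $X_{\rm reg}$.

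For the ``only if'' direction, assume $mK_X$ is Cartier at $x$ and choose a neighbourhood $V$ of $x$ with $\O_X(mK_X)|_V\cong\O_V$ (possible after shrinking). Over $g^{-1}(V)$ the relative canonical divisor $mK_{Y/X}$ is then a genuine integral $g$-exceptional divisor, and one may write $mK_Y=g^*(mK_X)+mK_{Y/X}$ on $g^{-1}(V)$; since $g^*(mK_X)|_{g^{-1}(V)}\sim 0$ we get $\O_Y(mK_Y)|_{g^{-1}(V)}\cong\O_Y(mK_{Y/X})|_{g^{-1}(V)}$. Taking $E$ to be any $g$-exceptional divisor on $Y$ whose coefficient along each component of $\Ex(g)$ meeting $g^{-1}(V)$ equals that of $mK_{Y/X}$ (and arbitrary elsewhere) gives $\O(mK_Y)\cong\O(E)$ on $g^{-1}(V)$.

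For the ``if'' direction, suppose $\O(mK_Y)\cong\O(E)$ on $g^{-1}(V)$ with $E$ $g$-exceptional; fix $K_Y$ on $Y$ and use $K_X:=g_*(K_Y)$ to compute $\O_X(mK_X)$. An isomorphism of line bundles on $g^{-1}(V)$ means $(mK_Y-E)|_{g^{-1}(V)}=\divisor(\psi)|_{g^{-1}(V)}$ for some $\psi\in k(X)^*$. Applying proper pushforward of Weil divisors along $g$ and restricting to $V$, we have $g_*(E|_{g^{-1}(V)})=0$ as $E$ is $g$-exceptional, $g_*(mK_Y|_{g^{-1}(V)})=mK_X|_V$ by the choice of $K_X$, and $g_*(\divisor(\psi)|_{g^{-1}(V)})=\divisor(\psi)|_V$. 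Hence $mK_X|_V=\divisor(\psi)|_V$ is a principal Cartier divisor, so $\O_X(mK_X)|_V\cong\O_V$ and $mK_X$ is Cartier at $x$.

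For the analytic refinement, note that the ``only if'' part already produces a Zariski (hence analytic) neighbourhood, so only the ``if'' part needs the relaxation. First, $mK_X$ is Cartier at $x$ iff $\O_X(mK_X)$ is free at $x$ in the analytic topology: this is faithfully flat descent of freeness for finitely generated modules applied to $\O_{X,x}\to\O_{X,x}^{\mathrm{an}}$, together with the exactness of analytification, which makes it commute with the reflexive hull defining $\O_X(mK_X)$. Now rerun the divisor-pushforward argument analytically: on the analytic open $g^{-1}(V)$ the isomorphism $\O(mK_Y)^{\mathrm{an}}\cong\O(E)^{\mathrm{an}}$ gives $(mK_Y-E)|_{g^{-1}(V)}=\divisor(\psi)$ for a meromorphic function $\psi$ on $g^{-1}(V)$; since $g|_{g^{-1}(V)}\colon g^{-1}(V)\to V$ is proper and bimeromorphic with $V$ normal, $\psi$ descends to a meromorphic $\bar\psi$ on $V$ (restrict $\psi$ to $g^{-1}(V\cap X_{\rm reg})\cong V\cap X_{\rm reg}$ and extend across $V\setminus X_{\rm reg}$, which has codimension $\ge 2$, by Riemann extension on the normal complex space $V$; the extension agrees with $\psi$ on the dense open $g^{-1}(V\cap X_{\rm reg})$, hence on all of $g^{-1}(V)$). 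Pushing forward divisors as before yields $mK_X|_V=\divisor(\bar\psi)|_V$, so $mK_X$ is analytically Cartier at $x$. I expect the descent of $\psi$ in the analytic setting — resting on the analytic Zariski main theorem / Riemann extension on normal complex spaces — to be the only point needing care; everything else is bookkeeping with Weil divisors and the identity $K_X=g_*(K_Y)$.
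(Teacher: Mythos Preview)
Your argument is correct and follows essentially the same approach as the paper's proof: both directions use $K_X=g_*K_Y$ together with pushforward of Weil divisors (the exceptional divisor in the ``only if'' direction being $mK_{Y/X}$), and the analytic refinement proceeds by descending the meromorphic function realizing $mK_Y-E$ as principal on $g^{-1}(V)$ down to $V$. The only cosmetic difference is in the last step: you invoke faithful flatness of $\cO_{X,x}\to\cO_{X,x}^{\rm an}$ to pass from analytic to algebraic local freeness, whereas the paper observes that $\cO(mK_X)$ and $\cO(mK_X)^{\rm an}$ have isomorphic completions at $x$ and applies Nakayama --- these are equivalent justifications.
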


\begin{proof}
Note that after fixing the Cartier divisor $K_Y$ on $Y$, we may take $K_X = g_*K_Y$. 
If $mK_X$ is Cartier, then $mK_Y - g^*(mK_X)$ is an integral
exceptional divisor. 
Thus, given $x \in X$ such that $mK_X$ is Cartier at $x$, it is enough to take an open 
neighborhood $V$ of $x$ where $mK_X$ is principal. 
Conversely, if there is $E$ as in the statement, then 
taking the push-forward and observing that
$g_*(mK_Y)=mK_X$ and $g_*E =0$, we see that $mK_X$ is linearly equivalent to zero
in a neighborhood of $x$.

Suppose now that $X$ is a complex variety, and assume that 
$\O(mK_Y)^{\rm an} \cong \O(E)^{\rm an}$ on $g^{-1}(V)$, 
where $V$ is an open neighborhood of $x$ in the analytic topology. 
It follows that there is a meromorphic function $\phi$ on $Y$ such that 
${\rm div}_Y(\phi) = mK_Y - E$ on $g^{-1}(V)$. In this case 
${\rm div}_X(\phi) = mK_X$ on $V$. Therefore $\O(mK_X)^{\rm an}$ is locally free of rank 
one at  $x$. Since $\O(mK_X)$ and $\O(mK_X)^{\rm an}$ 
have isomorphic completions at $x$, it follows by 
Nakayama's Lemma that $\O(mK_X)$ is locally free of rank one at $x$, hence $mK_X$ is Cartier
at this point.
\end{proof}

We are now ready to prove the key result of this appendix. 

\begin{proof}[Proof of Theorem~\ref{thm:Q-Gor}]
In this proof we only consider the closed points of the schemes involved.
Let $g \colon Y \to X$ be a resolution of singularities whose exceptional locus is a divisor
with simple normal crossings.
By a theorem of Verdier \cite{Ver}, we can write $X$ as a \emph{finite} disjoint union
$X = \bigsqcup X^\alpha$, with each $X^\alpha$ an irreducible
locally closed subset of $X$, such that the restriction
$g^\alpha \colon Y^\alpha \to X^\alpha$ of $g$ to $Y_{\alpha}=g^{-1}(X_{\alpha})$ is topologically locally trivial.
Let $Z^\alpha := \ov{X^\alpha} \smallsetminus X^\alpha$ (the closure being taken inside $X$).
Note that each $Z^\alpha$ is a closed subset of $X$.

By Lemma~\ref{lem:open-nK_X}, 
there is a positive integer $s$ such that $X$ is $\Q$-Gorenstein at a point $x$
if and only if $X$ is $s$-Gorenstein at $x$. 
By generic smoothness, generic flatness, and Lemma~\ref{lem:rest-O(mK_X)}, 
after possibly replacing
$T$ by a nonempty open subset, we can assume that the following properties hold:
\begin{enumerate}
\item[(1)]
$T$ is smooth;
\item[(2)]
$Y \to T$ is smooth,
the exceptional locus of $g$ has relative simple normal crossings 
over $T$,
 and
for every point $t \in T$, the induced morphism $g_t \colon Y_t \to X_t$ is
a resolution of singularities
 and every $g_t$-exceptional divisor is the restriction
to $Y_t$ of a $g$-exceptional divisor;
\item[(3)]
$X$ is flat over $T$, and both $\ov{X^\alpha}$ and $Z^\alpha$ are flat over $T$ 
for every $\alpha$;
\item[(4)]
For every $t \in T$, there is a canonical isomorphism
$\O(sK_X)\vert_{X_t} \cong \O(sK_{X_t})$.
\end{enumerate}
We will show that after this reduction the conclusion of the
theorem holds for every $t \in T$.

Fix any $t \in T$, and suppose that $x$ is a point where $X_t$ has
rational singularities. Since $f$ is flat and $T$ is smooth,
this implies that $X$ has rational singularities at $x$, and hence
in a neighborhood of $x$ (cf. \cite[Th\'eor\`eme~2 and Th\'eor\`eme~4]{Elk}). 

By Lemma~\ref{lem:open-nK_X}, we see that the conditions~(c) and~(d) are equivalent.
Furthermore, condition~(4) implies that~(b) and~(d) are equivalent,
and clearly~(b) implies~(a).
Therefore, in order to conclude it suffices to show that~(a) implies~(c).

We thus assume that~(a) holds, that is, that there is a positive
integer $m$ such that $mK_{X_t}$ is Cartier at $x$.
We denote by $X^\alpha_t$, $(\overline{X_{\alpha}})_t$, and $Z^\alpha_t$ the fibers of $X^\alpha$,
$\overline{X_{\alpha}}$, and $Z^\alpha$ over $t$.
Let $\cA := \{ \alpha \mid x \in \ov{X^\alpha_t}\}$. Note that
$$
x \in {\rm Int}\Big( \bigsqcup_{\alpha \in \cA} X^\alpha_t \Big).
$$
Indeed, if this were false, then for every open neighborhood $V$ of $x$ in $X_t$
we could find an $\alpha \not\in \cA$ such that $V \cap X^\alpha_t \ne \emptyset$.
By considering a nested sequence of open neighborhoods of $x$, we see that we can pick
$\alpha$ independent of $V$. As this holds for every $V$, we conclude that
$x \in \ov{X^\alpha_t}$, which contradicts the definition of $\cA$.

\noindent{\bf Claim}.
We have
$$
x \in {\rm Int}\Big( \bigsqcup_{\alpha \in \cA} X^\alpha \Big).
$$

\begin{proof}[Proof of claim.]
We argue by contradiction. Let us assume that $x$ is not in the interior
of $\bigsqcup_{\alpha \in \cA} X^\alpha$. Arguing as above,
we conclude that there is an $\alpha\not\in \cA$ such that $x \in \ov{X^\alpha}$.
Consider the morphism $\ov{X^\alpha} \to T$. We have $x \in (\ov{X^\alpha})_t$,
and since $x \not\in \ov{X^\alpha_t}$, 
there is an open neighborhood $V$ of $x$ in  $(\ov{X^\alpha})_t$
that is disjoint from $X^\alpha_t$, and hence from $X^\alpha$.
Therefore $V$ is contained in $Z^\alpha$, hence in $Z^\alpha_t$.
The closure of $V$ in $(\ov{X^\alpha})_t$,
and hence the closure of $Z^\alpha_t$ in $(\ov{X^\alpha})_t$, contains some irreducible 
component $W$ of $(\ov{X^\alpha})_t$. 
Since both $Z^{\alpha}$ and $\overline{X^{\alpha}}$ are flat over $T$, it follows that
if $x'\in W$ is a general (closed) point, then
$$\dim(W)=\dim(\cO_{Z^{\alpha},x'})-\dim(T)=\dim(\cO_{\overline{X^{\alpha}},x'})-\dim(T)$$
(see \cite[Proposition~III.9.5]{Har}).
The fact that $\dim(\cO_{Z^{\alpha},x'})=\dim(\cO_{\overline{X^{\alpha}},x'})$ contradicts the fact 
that $Z^{\alpha}$ is a proper closed subset of the irreducible set $\overline{X^{\alpha}}$,
and thus completes the proof of the claim. 
\end{proof}

We now fix a small contractible analytic open neighborhood 
$V \subseteq X$ of $x$ fully contained in $\bigsqcup_{\alpha \in \cA} X^\alpha$,
and such that $H^1(V,\cO_V^{\rm an})=0$.
Let $V_t = V \cap X_t$. 
We may and will assume that $V$ has rational singularities.
Furthermore, we may assume that $V_t$ is contained in any given
neighborhood of $x$, hence 
by Lemma~\ref{lem:an} and
the fact that $mK_{X_t}$ is Cartier at $x$ we may assume that there is a $g_t$-exceptional divisor
$E_t$ on $Y_t$ such that 
\begin{equation}\label{last}
\O(mK_{Y_t})^{\rm an} \cong \O(E_t)^{\rm an}
\quad\text{on}\ g_t^{-1}(V_t).
\end{equation}
It follows from condition~(2) that $mK_{Y_t}$ is the restriction of $mK_Y$
to $Y_t$, and $E_t$ is the restriction of a $g$-exceptional
divisor $E$ on $Y$. By Lemma~\ref{lem:an}, in order
to conclude the proof of the theorem, it suffices to show that there is
$\ell\geq 1$ such that 
$L^{\ell}$ is trivial, where 
$L=\O(mK_Y-E)^{\rm an}\vert_{g^{-1}(V)}$. 
Let $\gamma=c^1(L)\in H^2(g^{-1}(V),\ZZ)$. For every $x\in V$, we denote by
$\gamma_x$ the image of $\gamma$ in $H^2(Y_x,\Z)$ via the map induced by
$g^{-1}(x)=Y_x\hookrightarrow g^{-1}(V)$. 

Arguing by contradiction, let us assume that
$L^{\ell}$ is nontrivial for all $\ell \ge 1$.
It follows from \cite[(12.1.4)]{KM2} and the proof therein that in this case
we can find a $g$-exceptional curve $C \subset Y$,
with image $p := g(C)$ in $V$, such that $(L\cdot C)\neq 0$.
In particular, $\gamma_p\neq 0$.

By our choice of $V$, we have $p \in X^\alpha$ for some $\alpha \in \cA$.
Note that $V_t \cap X^\alpha_t \ne \emptyset$ by the definition of $\cA$
(recall that by \cite[Proposition~5]{GAGA},
the analytic closure of $X_t^\alpha$ in $X_t$ agrees with
the Zariski closure $\ov{X_t^\alpha}$).
Pick any point $q \in V_t \cap X^\alpha_t$. Since $X^\alpha$ is connected, and hence
path connected, we can fix a path $w \colon [0,1] \to X^\alpha$ joining $p$ to $q$.
As $g^\alpha$ is topologically locally trivial, moving along the path $w$
induces an isomorphism $H^2(Y_p,\Z) \cong H^2(Y_q,\Z)$.
Note that $\gamma_p$ is mapped to $\gamma_q$ via this isomorphism, hence
$\gamma_q\neq 0$.

On the other hand, (\ref{last}) implies that $L\vert_{g^{-1}(V)\cap Y_t}$
is trivial, hence so is $L\vert_{Y_q}$. Therefore $\gamma_q=0$, a contradiction.
 This completes the proof of the theorem.
\end{proof}

\begin{remark}\label{rem_s}
It follows from the above proof that in Theorem~\ref{thm:Q-Gor} one can take any  $s$ as given
by Lemma~\ref{lem:open-nK_X}, that is, such that $sK_X$ is Cartier on the largest open subset of
$X$ that is normal and $\QQ$-Cartier.
\end{remark}

We will need the following version of the result, which holds
over any algebraically closed field $k$ of characteristic zero. 

\begin{theorem}\label{Gor_index}
Let $f\colon X\to T$ be a morphism of schemes of finite type over $k$, with $T$ integral,
and let $\sigma\colon T\to X$ be a section of $f$, i.e. $f\circ \sigma=1_T$. Suppose that we have 
a countable dense subset $T_0\subseteq T$ of closed points such that for all $t\in T_0$,
at $\sigma(t)$ the fiber 
$X_t:=f^{-1}(t)$ is $\QQ$-Gorenstein and has rational singularities.
Then there is a nonempty open subset $U$ of $T$, and a positive integer $s$ such that
\begin{enumerate}
\item[i)] $X$ is normal and $sK_X$ is Cartier in a neighborhood of $\sigma(U)$.
\item[ii)] For every ${\rm (}$not necessarily closed${\rm )}$ point  $\xi\in U$, the fiber $X_{\xi}$ is normal at 
$\sigma(\xi)$, and we have
a canonical isomorphism 
$$\cO(sK_X)\vert_{X_{\xi}}\simeq\cO(sK_{X_{\xi}})$$
in a neighborhood of $\sigma(\xi)$. In particular, $sK_{X_{\xi}}$ is Cartier at $\sigma(\xi)$.
\end{enumerate}
\end{theorem}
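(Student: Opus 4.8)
\textbf{Proof strategy for Theorem~\ref{Gor_index}.}
The plan is to deduce the statement from the complex case, Theorem~\ref{thm:Q-Gor}, by a spreading-out (Lefschetz principle) argument; the point is that, once the hypothesis on $T_0$ has been converted into a statement about the generic behavior of the fibres, everything in sight is of finite type and can be descended to a finitely generated $\QQ$-algebra and base-changed to $\CC$. The first thing I would do is eliminate $T_0$. After replacing $T$ by a dense open subset we may assume, by Corollary~\ref{cor:gen-free}, that $T$ is smooth and $f$ is flat. For each integer $m\ge 1$ the locus $V_m=\{x\in X\mid X_{f(x)}\text{ is }m\text{-Gorenstein at }x\}$ is open in $X$ (from Lemma~\ref{lem:rest-O(mK_X)}, Lemma~\ref{lem:open-nK_X} and flat base change), and the locus $V_{\mathrm{rat}}=\{x\in X\mid X_{f(x)}\text{ has rational singularities at }x\}$ is open as well (this is where one needs Elkik's deformation theorems for rational singularities). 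Since $\bigcup_m\sigma^{-1}(V_m)$ is an increasing union of open subsets of the irreducible scheme $T$ containing the dense set $T_0$, some $\sigma^{-1}(V_{m_0})$ is nonempty, hence dense; and $\sigma^{-1}(V_{\mathrm{rat}})$ is a nonempty open subset of $T$ too. Replacing $T$ by $\sigma^{-1}(V_{m_0})\cap\sigma^{-1}(V_{\mathrm{rat}})$ and $X$ by the open subscheme of points where both $X$ and the fibre of $f$ are normal (using that rational singularities are normal, and that by flatness of $f$ over the smooth $T$ a fibre with rational singularities at a point forces $X$ to have rational singularities there), I reduce to the case where $X\to T$ is a morphism of normal varieties over $k$ with normal fibres and every closed fibre $X_t$ is $m_0$-Gorenstein and has rational singularities at $\sigma(t)$. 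No reference to $T_0$ remains.

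Next I would spread this situation out: there is a finitely generated $\QQ$-subalgebra $A\subseteq k$ and a model $f_A\colon X_A\to T_A$ over $\operatorname{Spec}A$ with a section $\sigma_A$ whose base change along $A\hookrightarrow k$ recovers $f\colon X\to T$; shrinking $\operatorname{Spec}A$ by Corollary~\ref{cor:gen-free} and Lemma~\ref{lem:rest-O(mK_X)} we may assume that all the properties above persist after base change to every field extension of $\operatorname{Frac}(A)$ and that formation of the sheaves $\cO(mK)$ commutes with the relevant base changes. Choosing an embedding $\operatorname{Frac}(A)\hookrightarrow\CC$ and base-changing, we obtain $f_{\CC}\colon X_{\CC}\to T_{\CC}$, a morphism of normal complex varieties with normal fibres such that every closed fibre is $m_0$-Gorenstein and has rational singularities at the section. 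Applying Theorem~\ref{thm:Q-Gor} and Remark~\ref{rem_s} we get an integer $s\ge 1$ and a nonempty open $T^{\o}\subseteq T_{\CC}$ such that $sK_{X_{\CC}}$ is Cartier on the whole $\QQ$-Gorenstein locus of $X_{\CC}$ and the equivalence stated there holds on $T^{\o}$. Picking a closed point $\tau\in T^{\o}$, the fibre $X_{\CC,\tau}$ satisfies condition~(a) and has rational singularities at $\sigma_{\CC}(\tau)$, so condition~(d) holds: $X_{\CC}$ is $s$-Gorenstein at $\sigma_{\CC}(\tau)$. In particular, by Lemma~\ref{lem:open-nK_X}, the open locus where $sK_{X_{\CC}}$ is Cartier is nonempty and contains $\sigma_{\CC}(\tau)$.

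Finally I would descend the conclusion. Since $\cO(mK)$ commutes with the base changes $\operatorname{Frac}(A)\leftarrow A\to k$ and with field extensions (using Lemma~\ref{alg_closed} for the latter), the locus $\{x\in X\mid sK_X\text{ is Cartier at }x\}$ is open in $X$ and its preimage under $\sigma$ is a nonempty open subset $U\subseteq T$. Shrinking $U$ into the set $W_s$ provided by Lemma~\ref{lem:rest-O(mK_X)}, assertions~(i) and~(ii) follow at once, together with the normality of $X$ and of its fibres along $\sigma(U)$ arranged above.

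The step I expect to be the main obstacle is the bookkeeping in the spreading-out: one must guarantee that the integer $s$, the open locus where $sK$ is Cartier, and the restriction isomorphisms $\cO(sK_X)\vert_{X_{\xi}}\cong\cO(sK_{X_{\xi}})$ are all preserved under the base changes $A\to\operatorname{Frac}(A)\hookrightarrow\CC$ and $A\hookrightarrow k$; the tools that should make this work are Generic Freeness (Corollary~\ref{cor:gen-free}), Lemma~\ref{lem:rest-O(mK_X)}, Remark~\ref{rem_s}, and Lemma~\ref{alg_closed}. A secondary technical point, needed already in the first reduction over $k$, is the openness of the locus $V_{\mathrm{rat}}$, which relies on Elkik's theorems.
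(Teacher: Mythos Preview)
Your overall strategy---reduce to a situation over $\CC$ and apply Theorem~\ref{thm:Q-Gor}---is the same as the paper's, and the descent paragraph is essentially right. The gap is in your first step, where you try to eliminate $T_0$ before changing the base field.

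You assert that $V_m=\{x\in X\mid X_{f(x)}\text{ is }m\text{-Gorenstein at }x\}$ is open in $X$, citing Lemmas~\ref{lem:rest-O(mK_X)} and~\ref{lem:open-nK_X}. But Lemma~\ref{lem:open-nK_X} concerns the $\QQ$-Gorenstein locus of a \emph{fixed} scheme, not of fibres, and Lemma~\ref{lem:rest-O(mK_X)} only gives the identification $\cO(mK_X)\vert_{X_\xi}\cong\cO(mK_{X_\xi})$ over an open subset $W_m\subseteq T$ that depends on $m$. So what you actually get is that $V_m\cap f^{-1}(W_m)$ is open; you do not know $V_m$ itself is open, and you cannot shrink $T$ into $\bigcap_m W_m$. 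Consequently the step ``some $\sigma^{-1}(V_{m_0})$ is nonempty, hence dense'' is unjustified: for a given $t_0\in T_0$ you know $X_{t_0}$ is $m(t_0)$-Gorenstein at $\sigma(t_0)$, but you have no control over whether $t_0\in W_{m(t_0)}$, and different points of $T_0$ may require different $m$'s. Showing that a \emph{single} $m_0$ works on a nonempty open set is precisely the content of Theorem~\ref{thm:Q-Gor} (the equivalence (a)$\Leftrightarrow$(b)), so using it here is circular.

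The paper sidesteps this by \emph{not} eliminating $T_0$. It reduces (via Elkik, as you do) to the case where all fibres have rational singularities, then aims directly for $\sigma^{-1}(W)\neq\emptyset$ where $W$ is the $\QQ$-Gorenstein locus of the total space $X$ (open by Lemma~\ref{lem:open-nK_X}, no fibrewise statement needed). To transport the hypothesis on $T_0$ to $\CC$, the paper descends to a subfield $k'\subset k$ of \emph{countable} transcendence degree over $\QQ$---large enough that the countably many points of $T_0$ are defined over $k'$---rather than to a finitely generated $\QQ$-algebra. After choosing an embedding $k'\hookrightarrow\CC$, one lifts each $t'\in T_0'$ to a closed point $\widetilde t\in\widetilde T$, obtaining a dense set $\widetilde T_0$ to which Theorem~\ref{thm:Q-Gor} applies directly. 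The pull-back of $W$ is then shown to meet the section over $\CC$, and one descends via Lemma~\ref{alg_closed}~i) as you outline. Your finitely-generated spreading-out would work only after $T_0$ has been eliminated, which is the step that fails.
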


\begin{proof}
It is clear that we may replace $T$ by any nonempty open subset $V$
(note that the set $T_0\cap V$ is countable and dense in $V$). Furthermore,
if $W\subseteq X$ is an open subset such that $\sigma^{-1}(W)$ is nonempty, it is enough 
to prove the theorem for $W\cap f^{-1}(\sigma^{-1}(W))\to \sigma^{-1}(W)$ (note that $\sigma$ induces a section of this morphism). 

After replacing $T$ by an open smooth subset, we may assume that $T$ is smooth, and
$f$ is flat (we again use generic flatness). By \cite[Th\'eor\`eme~4]{Elk}, there is an open
subset
$W_1$ of $X$ whose closed points $x\in W_1$ are precisely those such that $X_{f(x)}$ has rational singularities at $x$. Since $\sigma(t)\in W_1$ for every $t\in T_0$, we see that $\sigma^{-1}(W_1)$ is nonempty. 
After replacing $X$ by $W_1\cap f^{-1}(\sigma^{-1}(W_1))$, we may assume that
every closed fiber $X_t$ has rational singularities. In this case, by \cite[Th\'eor\`eme~2]{Elk}
$X$ has rational singularities. Furthermore, all fibers of $f$ are normal.

We apply Lemma~\ref{lem:open-nK_X} to get the open subset $W\subseteq X$
consisting of the points in $X$ where $K_X$ is $\QQ$-Cartier. Let $s$ be such that
$sK_X$ is Cartier on $W$. In order to prove the theorem it is enough to show that
$\sigma^{-1}(W)$ is nonempty. Indeed, if this is the case we may replace 
$X$ by $W\cap f^{-1}(\sigma^{-1}(W)$, in which case $sK_X$ is Cartier.
After replacing $T$ by a nonempty open subset, we may assume by
 Lemma~\ref{lem:rest-O(mK_X)} that $\cO(sK_X)\vert_{X_{\xi}}\simeq\cO(sK_{X_{\xi}})$
 for every $\xi\in T$. This would prove the theorem. 
 
 If the ground field is $\CC$, then by Theorem~\ref{thm:Q-Gor} (see also Remark~\ref{rem_s})
 we may replace $T$ by an open subset and assume that for every closed point $t\in T$, 
the divisor $sK_{X_t}$ is Cartier at a closed point $x$ if and only if $x\in W$. 
 In this case we see that $T_0\subseteq \sigma^{-1}(W)$, hence $\sigma^{-1}(W)$ is nonempty. 

For an arbitrary $k$, we can find a subfield $k'$ of $k$ 
of countable transcendence degree over $\QQ$, such that there are morphisms $f'\colon X'\to T'$
and $\sigma'\colon T'\to X'$
of schemes over $k'$, 
 with $f$ and $\sigma$ obtained from $f'$, respectively $\sigma'$, by base-extension via
$\Spec(k)\to\Spec(k')$, and such that the points in $T_0$ are defined over $k'$.
If $\phi\colon T\to T'$ is the natural morphism, it follows that 
$T'_0:=\phi(T_0)$ consists of $k'$-rational closed points.

There is an embedding $k'\hookrightarrow\CC$. Let 
$\widetilde{f}\colon \widetilde{X}\to\widetilde{T}$ and $\widetilde{\sigma}\colon
\widetilde{T}\to\widetilde{X}$
be the morphisms
obtained from $f'$, respectively $\sigma'$,
by base-extension to $\CC$. If $\psi\colon \widetilde{T}\to T'$
is the natural morphism, we choose (closed) points $\widetilde{t}\in \psi^{-1}(t')$
for all $t'\in T'_0$. 
Let $\widetilde{T}_0$ be the set consisting of these closed points. 
It follows from Lemma~\ref{alg_closed} i)
that for every $\widetilde{t}\in\widetilde{T_0}$, the fiber $\widetilde{X}_{\widetilde{t}}$ is $\QQ$-Gorenstein and with rational singularities at
 $\widetilde{\sigma}(\widetilde{t})$ 
 (the assertion about rational singularities follows easily from definition 
 by considering base-extensions of resolutions of singularities).
 
 On the other hand, if $W'\subseteq X'$ and $\widetilde{W}\subseteq\widetilde{X}$ are
 the subsets where $X'$ and $\widetilde{X}$, respectively, are normal and $\QQ$-Gorenstein,
 then by Lemma~\ref{alg_closed} i) we see that $\widetilde{W}=W'
 \times_{\Spec(k')}\Spec(\CC)$ and $W=W'\times_{\Spec(k')}\Spec(k)$. 
 Furthermore, the divisors $sK_{X'}$ and $sK_{\widetilde{X}}$ are Cartier on 
 $W'$ and $\widetilde{W}$, respectively.
 We have already seen
 that $\widetilde{\sigma}^{-1}(\widetilde{W})$ is a nonempty open subset of $\widetilde{T}$.
 The closure   of $\psi(\widetilde{T}\smallsetminus\widetilde{\sigma}^{-1}(\widetilde{W}))$ is a proper closed subset of $T'$. If $t'$ 
 is a closed point in the complement of this closed set, and if $t\in\phi^{-1}(t')$, then $\sigma(t)\in W$. 
 Therefore $\sigma^{-1}(W)$ is nonempty, and this completes the proof of the theorem.
\end{proof}

In order to state the next corollary, we introduce some notation.
Let $f\colon X\to T$ be a morphism of schemes of finite type over $k$, with $T$ integral, and 
$\sigma\colon T\to X$ a section of $f$.
Suppose that $(t_m)_{m\geq 1}$ is a dense sequence of closed points in $T$ such that
each $X_{t_m}$ is klt around $\sigma(t_m)$. 
Suppose that $\frA=\prod_{j=1}^r\fra_j^{p_j}$ is an $\RR$-ideal on $X$, such that
each $\fra_j$ vanishes along $\sigma(T)$, but it does not vanish along any fiber of $f$. For every
(not necessarily closed) point $\xi\in T$, we put $\fra_{j, \xi}=\fra_j\cdot\cO_{X_{\xi}}$
and $\frA_{\xi}=\prod_j\fra_{j, \xi}^{p_j}$.
We also denote by $\frm_{\xi}$ the ideal defining 
$\sigma(\xi)$ in $X_{\xi}$.

\begin{corollary}\label{cor_for_key_technical}
With the above notation and assumptions, the following hold:
\begin{enumerate}
\item[i)] If $\eta$ is the generic point of $T$, then $X_{\eta}$ is klt at $\sigma(\eta)$. Furthermore, there is
a subset $J$ of $\ZZ_{>0}$ such that $\{t_i\mid i\in J\}$ is dense in $T$, and 
for every $i$ in $J$
$$\lct_{\sigma(\eta)}(X_{\eta},\frA_{\eta})=\lct_{\sigma(t_i)}(X_{t_i},\frA_{t_i}).$$
\item[ii)] If $E$ is a divisor over $X_{\eta}$ computing  $\lct_{\sigma(\eta)}(X_{\eta},\frA_{\eta})$,
then after possibly replacing $J$ by a smaller subset $J_1$ with the same properties, we may assume that, in addition, for 
every $i\in J_1$ we have a divisor $E_i$ over $X_{t_i}$ that computes
$\lct_{\sigma(t_i)}(X_{t_i},\frA_{t_i})$, and such that 
$\ord_E(\frm_{\eta})=\ord_{E_i}(\frm_{t_i})$ and 
$\ord_E(\fra_{j, \eta})=\ord_{E_i}(\fra_{j, t_i})$ for all $j\leq r$.
In particular, if $E$ has center equal to $\sigma(\eta)$, then
each $E_i$ with $i\in J_1$ has center 
$\sigma(t_i)$.
\end{enumerate}
\end{corollary}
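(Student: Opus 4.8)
The plan is to deduce Corollary~\ref{cor_for_key_technical} from Theorem~\ref{Gor_index} together with the semicontinuity and generic-limit machinery already developed. First I would apply Theorem~\ref{Gor_index} to $f\colon X\to T$ and $\sigma$, using the dense sequence $(t_m)$: after shrinking $T$ to a nonempty open subset $U$, we obtain a positive integer $s$ such that $X$ is normal and $sK_X$ is Cartier in a neighborhood of $\sigma(U)$, and for every point $\xi\in U$ the fiber $X_\xi$ is normal at $\sigma(\xi)$ with the canonical restriction isomorphism $\cO(sK_X)\vert_{X_\xi}\simeq\cO(sK_{X_\xi})$ near $\sigma(\xi)$. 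In particular the generic fiber $X_\eta$ is $\QQ$-Gorenstein at $\sigma(\eta)$, and it is klt there: this follows because kltness is an open condition along $\sigma(U)$ (use a fixed log resolution $Y\to X$ of $(X,\prod_j\fra_j)$, spread out over $T$ via Corollary~\ref{cor:gen-free} so that $Y_\xi\to X_\xi$ is a log resolution and the discrepancy coefficients are constant along $\sigma(U)$), together with the fact that infinitely many of the klt fibers $X_{t_i}$ meet $\sigma(U)$.

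Once we have a common log resolution over $U$ with constant numerical data, the formula \eqref{formula_lct} immediately gives that the function $t\mapsto\lct_{\sigma(t)}(X_t,\frA_t)$ is \emph{locally constant} on $U$ in the sense that it takes the same value on every fiber meeting a sufficiently small open set, and this common value equals $\lct_{\sigma(\eta)}(X_\eta,\frA_\eta)$. More precisely, after shrinking $U$, fix a log resolution $g\colon Y\to X$ over $U$ as above, with $K_{Y/X}=\sum_j\kappa_jF_j$ and $\fra_i\cO_Y=\cO_Y(-\sum_j\alpha_{i,j}F_j)$, where the $\kappa_j,\alpha_{i,j}$ are genuine integers independent of the fiber; restricting to $Y_\xi\to X_\xi$ preserves all these numbers (here is precisely where the uniform Gorenstein index $s$ from Theorem~\ref{Gor_index} is needed, to guarantee $K_{Y_\xi/X_\xi}$ is the restriction of $K_{Y/X}$). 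Then \eqref{formula_lct}, localized at $\sigma(\xi)$, reads off $\lct_{\sigma(\xi)}(X_\xi,\frA_\xi)$ from the same list of ratios $(\kappa_j+1)/\sum_i\alpha_{i,j}p_i$ over those $j$ with $\sigma(\xi)\in c_{X_\xi}(F_j)$; after further shrinking we may assume $\sigma(U)\subseteq c_X(F_j)$ for exactly the relevant $j$'s, making this set of $j$'s independent of $\xi$. Taking $J=\{i\mid t_i\in U\}$ gives part~i).

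For part~ii), let $E$ be a divisor over $X_\eta$ computing $\lambda:=\lct_{\sigma(\eta)}(X_\eta,\frA_\eta)$. After possibly refining the resolution $Y\to X$ over $U$ (again using Corollary~\ref{cor:gen-free} to spread out a model on which $E$ appears), I may assume $E=F_{j_0}$ is one of the prime divisors $F_j$ occurring above, so that $E$ is the restriction to $Y_\eta$ of a divisor on $Y$ that has nonempty fibers over $U$; set $E_i:=F_{j_0}\vert_{Y_{t_i}}$. By the constancy of the numerical data, $E_i$ computes $\lct_{\sigma(t_i)}(X_{t_i},\frA_{t_i})$ for every $i\in J$, and since $\ord_{F_{j_0}}(\fra_j)$ and $\ord_{F_{j_0}}(\frm)$ are computed by intersection numbers with the fixed divisors $\sum_j\alpha_{j,\cdot}F_j$, restricting to a fiber leaves these orders unchanged; that is, $\ord_E(\fra_{j,\eta})=\ord_{E_i}(\fra_{j,t_i})$ and $\ord_E(\frm_\eta)=\ord_{E_i}(\frm_{t_i})$. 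Finally, if $E$ has center $\sigma(\eta)$ then $c_X(F_{j_0})$ dominates $T$ and its intersection with a general fiber is the single point $\sigma(t_i)$ (shrink $U$ so that $c_X(F_{j_0})\to T$ has irreducible fibers mapping isomorphically onto a section contained in $\sigma(U)$), forcing $E_i$ to have center $\sigma(t_i)$.

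The main obstacle I anticipate is organizing the simultaneous spreading-out: we need a \emph{single} open $U\subseteq T$ over which (a) the log resolution $Y\to X$ of $\prod_j\fra_j$ restricts to a log resolution fiberwise with constant coefficients, (b) the uniform Gorenstein bound of Theorem~\ref{Gor_index} applies so that $K_{Y_\xi/X_\xi}=K_{Y/X}\vert_{Y_\xi}$, (c) the divisor $E$ computing the generic log canonical threshold lives on this model with fibers over all of $U$, and (d) the centers $c_X(F_j)$ behave well with respect to $\sigma$. Each of these is a routine generic-flatness/constructibility argument (Corollary~\ref{cor:gen-free}, plus Verdier-type stratification for the topological statement about centers), but they must be carried out in a compatible way and only finitely many shrinkings are allowed, which is guaranteed since there are finitely many divisors $F_j$ in play; kltness of $X_\eta$ and the choice of $J$ then come for free because infinitely many $t_i$ survive in any nonempty open subset.
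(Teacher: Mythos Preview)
Your approach is essentially the same as the paper's: apply Theorem~\ref{Gor_index} to bound the Gorenstein index uniformly, spread out a log resolution $h\colon Y\to X$ so that it restricts to a log resolution on every fiber with the same numerical data, and then read off both the klt property and the constancy of the log canonical threshold from formula~\eqref{formula_lct}. Two small points to tighten: first, the restriction $F_{j_0}\vert_{Y_{t_i}}$ is smooth but need not be connected, so you should take $E_i$ to be a \emph{connected component} of it (as the paper does) rather than the full restriction; second, to guarantee $\ord_E(\frm_\eta)=\ord_{E_i}(\frm_{t_i})$ you should arrange that $h$ is also a log resolution of the ideal $\fra_{\sigma(T)}$ defining the section (the paper does this explicitly), since otherwise the pullback of $\fra_{\sigma(T)}$ to $Y$ is not a divisor and the comparison of orders across fibers is not immediate from the divisorial data you have recorded.
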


\begin{proof}
It is clear that we are allowed to replace $T$ by any open subset $V$, in which case
we need to replace $\ZZ_{>0}$ by $J=\{i\mid t_i\in V\}$.
Since klt varieties have rational singularities (see \cite[Corollary~11.14]{Kol2}),
we may apply Theorem~\ref{Gor_index} to $f$. Let $U\subseteq T$ and $s$ be given by this theorem. After replacing $T$ by $U$, we may
assume $U=T$. Since $sK_X$ is Cartier around $\sigma(T)$, and since we are only interested
in the behavior around $\sigma(T)$, we may replace $X$ by the open subset where 
$sK_X$ is Cartier, and therefore assume $sK_X$ is Cartier.

Consider now a log resolution $h\colon Y\to X$ of $(X,\frA)$. Let ${\mathcal E}$ be the simple normal
crossings divisor on $Y$ given by the sum of the $h$-exceptional divisors and of the divisors 
appearing in the supports of the ideals $\fra_j\cO_Y$. By generic smoothness, after possibly
replacing $T$ by an open subset,
we may assume the following properties: 
\begin{enumerate}
\item[(1)] The composition $f\circ h$ is smooth, and 
${\mathcal E}$ has relative simple normal crossings over $T$. 
\item[(2)] For every prime divisor $F_j$ in ${\mathcal E}$, its image $Z_j$ in $X$
is flat over $T$ and maps onto $T$. 
\item[(3)] Furthermore, we may and will assume that each $Z_j$ contains
$\sigma(T)$ (otherwise we simply replace $T$ by $T\smallsetminus \sigma^{-1}(Z_j)$).
\end{enumerate}
It follows from (1) that
for every (not necessarily closed) point $\xi\in U$, the morphism $h_{\xi}\colon Y_{\xi}\to X_{\xi}$ is a log resolution of $(X_{\xi},\frA_{\xi})$.
By (2), if  $F$ is a component of ${\mathcal E}$ that is $h$-exceptional, then
$F_{\xi}$ is $h_{\xi}$-exceptional (note that $F_{\xi}$ is smooth, but might not be connected).
Since $\cO(sK_X)\vert_{X_{\xi}}\simeq\cO(sK_{\xi})$ in a neighborhood of $\sigma(\xi)$,
we deduce that $K_{Y/X}\vert_{Y_{\xi}}=K_{Y_{\xi}/X_{\xi}}$ over the inverse image of an open neighborhood of $\sigma(\xi)$. 
Since $\sigma(\xi)\in h_{\xi}(F_{\xi})$ for every prime divisor $F$ in ${\mathcal E}$, it follows that
each $X_{\xi}$ is klt and $\lct_{\sigma(\xi)}(X_{\xi},\frA_{\xi})=\lct(X,\frA)$.
Applying this with $\xi=t_i$ and $\xi=\eta$ gives the assertions in i).

Suppose now that $E$ is as in ii). $E$ appears as a prime divisor on some log resolution of
$(X_{\eta},\frA_{\eta}\cdot\frm_{\eta})$. Since this log resolution is defined over $k(\eta)$, it follows that after replacing 
$T$ by a suitable open subset, we may assume that this log  resolution is equal to $h_{\eta}$ for some log resolution $h\colon Y\to X$ as above. In fact, we may assume that $h$ is a log resolution of  $(X,\frA\cdot \fra_{\sigma(T)})$, where 
$\fra_{\sigma(T)}$ is the ideal defining $\sigma(T)$ in $X$. We may again assume that $h$
satisfies (1)-(3) above. There is a prime divisor $F$ in ${\mathcal E}$ such that 
$E=F_{\eta}$. It is then clear that, for every $i$, we may take $E_i$ to be any connected component
of $E_{t_i}$, and that the divisors $E_i$ satisfy ii).
\end{proof}

\begin{remark}\label{independence_of_exponents}
It follows from the proof of the corollary that the set $J$ in i) can be chosen independently 
of the exponents $p_1,\ldots,p_r$. In fact, while the convention for $\RR$-ideals is that all
exponents are positive, it is clear that the result in the corollary still holds if some (but not all)
of the $p_i$ are allowed to be zero.
\end{remark}

\frenchspacing

\providecommand{\bysame}{\leavevmode \hbox \o3em
{\hrulefill}\thinspace}

\end{document}